\title{Combinatorial systolic inequalities}
\author{Ryan Kowalick}
\address{422 E 17th Ave. \\
                 Columbus, Ohio 43201}
\email{rkowalick@gmail.com}
\author{Jean-Fran\c{c}ois Lafont}
\address{Department of Mathematics\\
                 Ohio State University\\
                 Columbus, Ohio 43210}
\email{jlafont@math.ohio-state.edu}
\author{Barry Minemyer}
\address{Department of Mathematics\\
                 Ohio State University\\
                 Columbus, Ohio 43210}
\email{minemyer.1@osu.edu}
\date{\today}
\newtheorem{thm}{Theorem}
\newtheorem*{theorem}{Main Theorem}
\newtheorem{lem}[thm]{Lemma}
\newtheorem*{cor}{Corollary}
\theoremstyle{definition}
\newtheorem*{rem}{Remark}
\newtheorem{claim}{\bf Claim}
\numberwithin{equation}{section}
\newcommand{\todo}[1]{\vspace{5mm}\par \noindent
\framebox{\begin{minipage}[c]{0.95 \textwidth} \tt #1
\end{minipage}} \vspace{5mm} \par}
\renewcommand{\a}{\alpha}
\renewcommand{\b}{\beta}
\renewcommand{\d}{\delta}
\newcommand{\e}{\varepsilon}
\newcommand{\g}{\gamma}
\newcommand{\s}{\sigma}
\renewcommand{\t}{\tau}
\renewcommand{\O}{\Omega}
\newcommand{\T}{\mathcal T}
\def\Pb{\ifmmode{\Bbb P}\else{$\Bbb P$}\fi}
\def\Z{\ifmmode{\Bbb Z}\else{$\Bbb Z$}\fi}
\def\Q{\ifmmode{\Bbb Q}\else{$\Bbb Q$}\fi}
\def\C{\ifmmode{\Bbb C}\else{$\Bbb C$}\fi}
\def\R{\ifmmode{\Bbb R}\else{$\Bbb R$}\fi}
\def\H{\ifmmode{\Bbb H}\else{$\Bbb H$}\fi}
\def\S{\ifmmode{S^2}\else{$S^2$}\fi}
\def\U{\mathcal{U}}
\def\V{\mathcal{V}}
\def\N{\mathcal{N}}
\def\ds{\displaystyle}
\def\kp{\kappa^\prime}
\def\vxi{v_i}
\def\vxj{v_j}
\def\vyi{w_i}
\def\vyj{w_j}
\DeclareMathOperator{\ind}{ind}
\DeclareMathOperator{\alt}{alt}
\def\B{\mathcal B}
\def\E{\mathbb{E}}
\def\dist{\operatorname{dist}}
\def\diam{\operatorname{diam}}
\def\S{\mathcal S}
\def\sys{\operatorname{Sys}}
\def\vol{\operatorname{Vol}}
\begin{document}

\begin{abstract}
We establish combinatorial versions of various classical systolic inequalities. For a smooth triangulation
of a closed smooth manifold, the minimal number of edges in a homotopically 
non-trivial loop contained in the $1$-skeleton gives an integer called the combinatorial
systole. The number of top-dimensional simplices in the triangulation gives another
integer called the combinatorial volume. We show that a class of smooth manifolds satisfies
a systolic inequality for all Riemannian metrics if and only if it satisfies a corresponding 
combinatorial systolic inequality for all smooth triangulations. Along the way, we show
that any closed Riemannian manifold has a smooth triangulation which ``remembers''
the geometry of the Riemannian metric, and conversely, that every smooth triangulation gives rise 
to Riemannian metrics which encode the combinatorics of the triangulation. 
We give a few applications of these results.

\end{abstract}

\maketitle

\setcounter{secnumdepth}{1}

\setcounter{section}{0}

\section{\bf Introduction}

For a closed Riemannian manifold $(M, g)$, the {\it systole} is the minimal length of a homotopically non-trivial loop, denoted
$\sys_g(M)$, while the volume of $(M, g)$ is denoted $\vol_g(M)$. Systolic inequalities are expressions which relate the systole with
other geometric quantities, typically the volume. In this paper, we are interested in {\it combinatorial} versions of the systolic inequalities.

We view smooth triangulations of a manifold $M$ as a combinatorial
model for $M$. For such a triangulation $(M, \T)$, we define the combinatorial systole $\sys_\T(M)$ to be the minimal number of
edges for a combinatorial loop in the $1$-skeleton of $\T$ which is homotopically non-trivial in $M$. The discrete volume 
$\vol_\T(M)$ is just the number of top-dimensional simplices in the triangulation $\T$. The main goal of this paper is to establish
the following:

\vskip 10pt

\begin{theorem}\label{maintheorem}
Let $\mathcal M$ be a class of closed smooth $n$-manifolds. Then the following two statements are equivalent:
\begin{enumerate}
\item for every Riemannian metric $(M, g)$ on a manifold $M\in \mathcal M$, we have
$$\sys_g(M) \leq C \sqrt[n]{\vol_g(M)},$$
where $C$ is a constant which depends solely on the class $\mathcal M$.
\item for every smooth triangulation $(M, \T)$ of a manifold $M\in \mathcal M$, we have
$$\sys_\T(M) \leq C^\prime \sqrt[n]{\vol_\T(M)},$$
where $C^\prime$ is a constant which depends solely on the class $\mathcal M$.
\end{enumerate}
\end{theorem}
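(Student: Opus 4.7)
The plan is to prove each implication by constructing a bridge between Riemannian and combinatorial data in the appropriate direction: from a triangulation $\T$ we produce a Riemannian metric $g_\T$ encoding $\T$, and from a Riemannian metric $g$ we produce a smooth triangulation $\T_g$ remembering $g$. These are the two companion results advertised in the abstract, and both bridges will be designed so that volume and systole transfer between the two worlds up to multiplicative constants depending only on the dimension $n$.

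For the implication $(1) \Rightarrow (2)$, given $(M, \T)$ with $M \in \M$, I would build $g_\T$ by modeling each top-dimensional simplex on a regular Euclidean $n$-simplex of unit edge length, then smoothing in a small neighborhood of the codimension-$1$ skeleton to obtain an honest Riemannian metric. The construction yields $\vol_{g_\T}(M) \asymp \vol_\T(M)$, and since every non-contractible $g_\T$-loop can be deformed into the $1$-skeleton of $\T$ at bounded multiplicative cost in length, one obtains $\sys_\T(M) \leq C_1 \cdot \sys_{g_\T}(M)$. Applying (1) to $g_\T$ then yields (2). For the implication $(2) \Rightarrow (1)$, given $(M, g)$, I would first rescale to $h = g / \sys_g(M)^2$ so that $\sys_h(M) = 1$ and $\vol_h(M) = \vol_g(M) / \sys_g(M)^n$. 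I would then construct a smooth triangulation $\T_h$ whose edges all have $h$-length comparable to $1$ and whose top simplices all have $h$-volume comparable to $1$, for instance via a Voronoi-style construction on a maximal $\epsilon$-separated net with $\epsilon$ comparable to $1$. The bounded-geometry property forces $\vol_{\T_h}(M) \asymp \vol_h(M)$ and $\sys_{\T_h}(M) \asymp \sys_h(M) = 1$; applying (2) to $\T_h$ and rearranging then yields (1).

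The main obstacle is the quantitative analysis of the bridges themselves, in particular the construction of $\T_g$ from an arbitrary Riemannian metric. One must produce a smooth triangulation whose combinatorial data faithfully reflect the underlying geometry at a fixed scale, even when the original metric has small injectivity radius or large curvature, while keeping the simplex count proportional to the volume. This will require a quantitative refinement of Whitehead's smooth triangulation theorem together with a geometric approximation lemma guaranteeing that every short non-contractible $g$-loop is homotopic to a combinatorial loop using a controlled number of edges. Once both bridges are set up with constants depending only on $n$, the two implications follow from the computations above.
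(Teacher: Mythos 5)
Your bridge for $(1)\Rightarrow(2)$ is essentially the paper's: give each facet the metric of a regular Euclidean $n$-simplex and smooth the resulting singular metric (note the singular locus is the codimension-two skeleton, not the codimension-one skeleton -- two flat simplices glued along a shared facet are already smooth there), then push loops into the $1$-skeleton with multiplicative control on length. Be aware that this last assertion is not a routine remark but the bulk of the work in this direction: radial projection of a segment to the boundary of a simplex is not Lipschitz near the barycenter, and one must also handle how geodesics of the smoothed metric interact with the smoothing regions; the paper's Section 5 (Birkhoff shortening, the skeleton-by-skeleton projection lemma, and the cutting/iteration argument) exists precisely to justify it.

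The genuine gap is in $(2)\Rightarrow(1)$. After rescaling so that $\sys_h(M)=1$, you ask for a smooth triangulation all of whose edges have $h$-length comparable to $1$ \emph{and} all of whose facets have $h$-volume comparable to $1$. No such triangulation exists in general, because the volume of $(M,h)$ need not be bounded below in terms of its systole: for example, on $S^1\times S^2$ take the product of a circle of length $L$ with the unit round sphere and rescale by $1/L$; the systole is $1$ while $\vol_h(M)=4\pi/L^2$, which is arbitrarily small. Any triangulation has at least one facet, and each facet's volume is at most the total volume, so no triangulation of such a metric can have all facets of volume $\asymp 1$. Since the implication must be proved for \emph{arbitrary} metrics on manifolds in $\M$ (you cannot discard these metrics without assuming the conclusion), the bridge as stated fails. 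The repair -- and this is exactly the paper's Theorem \ref{thm:discrete-to-riemannian} -- is that the triangulation need not live at the systole scale at all: it suffices to produce, at some possibly very small, metric-dependent uniform scale, a triangulation with $\sup_e l_g(e)\le \delta_n\,\inf_\sigma \vol_g(\sigma)^{1/n}$, i.e.\ with no long-and-thin simplices. Then, writing $E$ for the longest edge and $v$ for the smallest facet volume, the chain $\sys_g(M)\le E\cdot\sys_\T(M)\le C'E\,\vol_\T(M)^{1/n}\le C'E\,(\vol_g(M)/v)^{1/n}\le \delta_n C'\,\vol_g(M)^{1/n}$ gives $(1)$ with no rescaling, using only the trivial direction of the systole comparison (an edge loop is in particular a loop). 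Producing such shape-controlled triangulations is the hard technical point you defer; the paper obtains it by running a quantitative version of Whitney's triangulation procedure on a Nash isometric embedding, precisely because intrinsic Delaunay/Voronoi constructions on a net are delicate to make uniformly non-degenerate (sliver-free) in dimensions $\ge 3$.
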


In \cite{Gromov1} Gromov proved that the class of closed smooth \emph{essential} Riemannian manifolds satisfies the above Riemannian systolic inequality.  So an immediate consequence is the following.

\begin{cor}\label{cor:essential}
Let $\mathcal M$ denote the class of closed smooth essential $n$-manifolds.  Then for every smooth triangulation $(M,\T)$ of a manifold $M \in \mathcal M$, we have
$$\sys_\T(M) \leq C \sqrt[n]{\vol_\T(M)},$$
where $C$ is a constant which depends solely on the class $\mathcal M$.
\end{cor}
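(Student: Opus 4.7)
The plan is very short: Corollary~\ref{cor:essential} is designed to be a direct application of the Main Theorem, so essentially all I need to do is verify that the hypotheses of the theorem are fulfilled for the class $\mathcal{M}$ of closed smooth essential $n$-manifolds, and then invoke the appropriate implication.

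First, I would cite Gromov's result from \cite{Gromov1}: every closed smooth essential Riemannian manifold $(M,g)$ satisfies
\[
\sys_g(M) \leq C \sqrt[n]{\vol_g(M)},
\]
where $C$ depends only on the dimension $n$ (in particular, only on the class $\mathcal{M}$). This is precisely statement~(1) of the Main Theorem applied to $\mathcal{M}$.

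Next, I would invoke the (1)$\Rightarrow$(2) direction of the Main Theorem. This yields a constant $C'$, depending only on $\mathcal{M}$, such that every smooth triangulation $(M,\T)$ of a manifold $M\in\mathcal{M}$ satisfies
\[
\sys_\T(M) \leq C' \sqrt[n]{\vol_\T(M)},
\]
which is exactly the conclusion of the corollary (after renaming $C'$ to $C$).

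Since the corollary is a direct consequence of the Main Theorem combined with a known Riemannian result, there is no genuine obstacle beyond correctly checking that Gromov's class of essential manifolds really is closed under the manipulations hidden inside the proof of the Main Theorem (i.e.\ that the constant extracted for~(2) depends only on the class of manifolds and not on the particular triangulation). This is guaranteed by the quantified form of the Main Theorem as stated, so no further work is required.
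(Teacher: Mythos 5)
Your proposal is correct and matches the paper's treatment exactly: the corollary is obtained by citing Gromov's systolic inequality for essential manifolds as hypothesis (1) and then applying the $(1)\Rightarrow(2)$ implication of the Main Theorem.
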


In the process of proving our {\bf Main Theorem}, we establish a number of auxiliary results which might also be of 
independent interest. After some preliminaries in Section \ref{section:background}, we show:

\begin{thm}[Encoding a Riemannian metric]\label{thm:discrete-to-riemannian}
There exists a constant $\delta_n$ depending solely on the dimension $n$, with the property that for any
closed Riemannian manifold $(M, g)$, there exists a smooth triangulation $\T$ with the property that 
$$\frac{\sup_{e\subset \T}\{l_g(e)\}}{\inf_{\sigma \subset \T}\{\sqrt[n]{\vol_g(\sigma)}\}} \leq \delta _n,$$
where the volume of the top-dimensional simplices $\sigma$, and the lengths of the edges $e$, 
are measured in the ambient $g$-metric.
\end{thm}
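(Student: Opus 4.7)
The plan is to construct a smooth triangulation whose top-dimensional simplices are uniformly \emph{fat}, in the sense that, after pulling back to a tangent space via the exponential map, each simplex becomes a Euclidean $n$-simplex with aspect ratio bounded by a constant depending only on $n$. For such a triangulation every edge length is comparable to the simplex diameter and every top-dimensional volume is comparable to $(\text{edge length})^n$, so the ratio appearing in the statement is immediately bounded by a dimensional constant.

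First I would exploit the compactness of $M$ together with the smoothness of $g$ to find a scale $r_0 > 0$ and a constant $\lambda \in (1, 2]$ with the property that for every $p \in M$ the exponential map $\exp_p$ is a $\lambda$-bilipschitz diffeomorphism from the $r_0$-ball in $(T_pM, g_p)$ onto $B_g(p, r_0) \subset M$. Below scale $r_0$, the Riemannian geometry is thus quantitatively close to Euclidean in any normal chart, and this reduces everything to a Euclidean-style construction up to a bounded metric distortion.

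Next, for any $\epsilon \ll r_0/\lambda$, I would produce a smooth triangulation $\T$ of $M$ at scale $\epsilon$ in which every top-dimensional simplex $\sigma$, viewed in normal coordinates centered at a nearby point, looks like a Euclidean simplex of circumradius at most $\epsilon$ and inradius at least $c_n \epsilon$ for some dimensional constant $c_n > 0$. One route is to cover $M$ by finitely many normal coordinate patches, tile each patch with a standard regular Euclidean simplicial lattice (such as the $\tilde A_n$ tiling), and then combinatorially reconcile the overlaps by a perturbation / sliver-removal argument; a cleaner alternative is to invoke the classical theorems of Cairns, Whitehead, or Peltonen on the existence of fat smooth triangulations of closed Riemannian manifolds. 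In either case, the bilipschitz bound on $\exp$ then gives $\sup_{e} l_g(e) \leq 2\lambda\epsilon$ and $\inf_{\sigma} \vol_g(\sigma) \geq \lambda^{-n}\, v_n\, (c_n \epsilon)^n$, where $v_n$ is the volume of the unit Euclidean $n$-ball, so the ratio in the statement is at most $2\lambda^2 / (c_n v_n^{1/n})$, a constant depending only on $n$.

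The main obstacle is the fat-triangulation construction in the middle step. In dimension $2$ this is elementary, but in dimensions $n \geq 3$ naive constructions such as iterated barycentric subdivision or the Delaunay triangulation of a maximal $\epsilon$-net can produce ``sliver'' simplices whose volume is arbitrarily small compared to their diameter, which would destroy the dimensional bound. Avoiding slivers requires either a careful perturbation of the vertices or a direct appeal to the classical fat-triangulation theorems, and I expect the bulk of the technical work here to consist of verifying that the fatness constant $c_n$ that one extracts is indeed universal (depending only on $n$) and that the resulting triangulation is smooth in the strong sense required by the remainder of the paper.
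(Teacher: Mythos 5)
Your outer reduction is fine: once you have a smooth triangulation all of whose top-dimensional simplices live at a single scale $\epsilon$ (circumradius $\leq \epsilon$) and are uniformly fat (inradius $\geq c_n\epsilon$), the $\lambda$-bilipschitz control on normal charts immediately gives the stated ratio bound, and since the ratio is scale-invariant the dependence of $r_0$ and $\epsilon$ on $(M,g)$ is harmless. You are also right to insist on a single scale for all simplices, not just fatness, since the theorem compares a supremum of edge lengths with an infimum of volumes over the whole triangulation.

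The genuine gap is that the middle step -- a smooth triangulation at one uniform scale whose fatness constant depends only on $n$ -- is the entire content of the theorem, and neither of your proposed ways of obtaining it is an argument as it stands. Whitehead's and Cairns' triangulation theorems do not track fatness quantitatively, so citing them does not produce any $c_n$ at all, let alone one independent of $(M,g)$; Peltonen-type fat triangulation results would require exactly the verification you defer (that the constants are dimensional, and that the scale is uniform across charts), and your alternative of tiling normal charts by a regular lattice and ``reconciling the overlaps'' is precisely the notoriously delicate chart-matching problem that makes all such proofs hard -- merging two fat triangulations along an overlap while preserving both fatness and the common scale is where slivers are created. This is why the paper does not work intrinsically: it first applies Nash's isometric embedding, so the ambient dimension is $m=m(n)$, and then reruns Whitney's construction quantitatively in $\R^m$ -- a cubulation at a fixed mesh, a volume-counting perturbation of the vertices away from the $(m-n-1)$-skeleton (this is the global ``sliver removal,'' done once, with no charts to reconcile), the secant complex on the intersection points, and finally the tubular-neighborhood projection $\pi^*$, shown to be uniformly bi-Lipschitz on each simplex. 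The fullness bound $\Theta_{n,m}$ and the edge-length pinching $C_{n,m}\bar L \leq L \leq \bar L$ come out depending only on $n$ and $m$, hence only on $n$, which is exactly the uniformity your sketch needs but does not supply. So either carry out a quantitative chart-reconciliation argument in full, or verify carefully that a specific fat-triangulation theorem in the literature yields dimension-only constants; without one of these, the proof is incomplete at its central point.
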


Roughly speaking, the triangulation $\T$ produced in the theorem has no simplices that are ``long and thin''
(as measured in the Riemannian metric $g$). 
Moreover, Theorem \ref{thm:discrete-to-riemannian} still holds for a possibly different constant $\d_{n,k}$ when considering the collection of $k$-dimensional simplices $\s$ (and replacing $\sqrt[n]{\text{Vol}_g(\s)}$ with the $k^{th}$ root of the $k$-dimensional volume of $\s$).
Theorem \ref{thm:discrete-to-riemannian} is established in Section \ref{section:discrete-to-riemannian}.
The basic idea is as follows. In \cite{Whitney}, Whitney proved that every closed smooth manifold $M^n$ supports a triangulation.
Note that in dimensions $\geq 4$, the corresponding statement is {\it false} for topological manifolds (work of Freedman and Casson in dimension $=4$ \cite{Fr}, and of Manolescu \cite{Ma} in dimensions $\geq 5$), see Section \ref{section:concluding-remarks}.
The method Whitney used was to first smoothly embed $M^n$ into $\mathbb R^{2n+1}$, and equip the latter with 
a sufficiently fine cubulation. Then one perturbs the embedding to be transverse to the $(n+1)$-cubes in the cubulation -- the
intersection will then give a collection of points. One then uses these points as the vertex set of a certain piecewise affine 
(polyhedral) complex in $\mathbb R^{2n+1}$. If the cubulation is chosen fine enough, this complex lies in a small normal 
neighborhood of $M^n$, and one can subdivide to get a simplicial complex, then project down onto $M^n$. 
Whitney then argues that this projection provides a smooth triangulation of $M$.

Now the proof of Theorem \ref{thm:discrete-to-riemannian} also uses Whitney's procedure, but rather than starting
from a smooth embedding into $\mathbb R^{2n+1}$, we want to start with an embedding that ``remembers'' the Riemannian
structure on $(M,g)$. A natural choice to use is Nash's isometric embedding. We then follow through Whitney's arguments,
and check that the resulting triangulation has the desired property. This is done in Section \ref{section:whitneyarg}.

\begin{thm}[Encoding a triangulation]\label{thm:riemannian-to-discrete}

There exists a constant $\kappa_n$ depending solely on the dimension $n$, with the property that for any
smooth triangulation $(M, \T)$ of a smooth compact manifold $M$, and for any $\e > 0$, there exists a Riemannian metric 
$g$ on $M$ which satisfies the following:

\begin{enumerate}

\item $|\text{Vol}_{g}(M) - Vol_\T(M)| < \e$

\item If $\g$ is a closed path on $M$, then there exists a closed edge loop $p$, freely homotopic to $\g$, so that 
$$ l_{\T}(p) \leq  \kappa_n l_{g}(\g).$$ 

\end{enumerate}

\end{thm}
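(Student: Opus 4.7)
My plan is to realize each top simplex of $\T$ isometrically as the regular Euclidean $n$-simplex $\Delta_n$ of unit volume. Gluing along shared faces produces a continuous piecewise-flat metric $g_0$ on $M$ with $\vol_{g_0}(M) = \vol_\T(M)$ exactly and every edge of length $s_n = \text{edge}(\Delta_n)$, a quantity depending only on $n$. The metric $g_0$ is smooth away from the codimension-two skeleton $\T^{(n-2)}$ and carries cone-type singularities along it. I would then mollify $g_0$ inside a small tubular neighborhood of $\T^{(n-2)}$ to obtain a smooth Riemannian metric $g$; since this neighborhood has $g_0$-volume of order $\eta^2$ (for tube radius $\eta$), choosing $\eta$ sufficiently small yields $|\vol_g(M) - \vol_\T(M)| < \e$ and establishes (1).

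For (2), I would apply a quantitative simplicial approximation relative to the open-star cover $\{\mathrm{st}(v)\}_{v \in \T^{(0)}}$. The crucial input is a \emph{uniform Lebesgue number estimate}: there exists $L_n > 0$, depending only on $n$, such that for each $x \in M$ there is a vertex $v$ with $B_{g_0}(x, L_n) \subset \mathrm{st}(v)$. Granting this, I would subdivide the $S^1$ domain of $\gamma$ into $N \leq \ell_{g_0}(\gamma)/L_n + 1$ arcs of $g_0$-length at most $L_n$, each mapped into a single star $\mathrm{st}(v_i)$. Each junction point lies in $\mathrm{st}(v_i) \cap \mathrm{st}(v_{i+1})$, so $v_i$ and $v_{i+1}$ sit in a common simplex and span an edge of $\T$. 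Concatenation yields an edge loop $p$ of combinatorial length at most $N$, freely homotopic to $\gamma$ via the standard star-by-star homotopy inside the contractible open stars. A short separate argument, pushing $\gamma$ generically off $\T^{(n-2)}$ within its free homotopy class into the region where $g = g_0$, yields $\ell_{g_0}(\gamma) \leq C_n \ell_g(\gamma)$; altogether $\ell_\T(p) \leq \kappa_n \ell_g(\gamma)$ for a constant $\kappa_n$ depending only on $n$.

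The main technical hurdle is the uniform Lebesgue number estimate, which I would prove via a Lipschitz bound on barycentric coordinates. For $x$ in the interior of a simplex $\sigma$, let $v$ be the vertex of $\sigma$ maximizing the barycentric coordinate, so that $\alpha_v(x) \geq 1/(\dim\sigma + 1) \geq 1/(n+1)$. Define $f \colon M \to [0,1]$ by $f(y) = \alpha_v(y)$ on $\mathrm{st}(v)$, computed in any top simplex containing $y$ and $v$, and $f(y) = 0$ otherwise. Since $\alpha_v$ vanishes along the link of $v$, the function $f$ is continuous on $M$. Moreover, on each top simplex containing $v$, $f$ is affine with $g_0$-gradient of norm exactly $1/h_n$, where $h_n$ is the altitude of $\Delta_n$; the values match across shared faces through $v$, so $f$ is globally $(1/h_n)$-Lipschitz. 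Setting $L_n := h_n/(2(n+1))$, every $y \in B_{g_0}(x, L_n)$ satisfies $f(y) \geq f(x) - L_n/h_n \geq 1/(2(n+1)) > 0$, whence $B_{g_0}(x, L_n) \subset \{f > 0\} = \mathrm{st}(v)$, as required.
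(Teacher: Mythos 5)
Your construction for part (1) is essentially the paper's: the unit-volume equilateral singular metric $g_s$ (your $g_0$) smoothed in a small neighborhood of $\T^{(n-2)}$, with the volume error controlled by shrinking the tube. Your route to part (2) -- the open-star cover, the barycentric-coordinate function $f$ giving a uniform Lebesgue number $L_n = h_n/(2(n+1))$ for $g_0$, and the simplicial-approximation-style replacement of $\gamma$ by an edge loop of length $\lesssim \ell_{g_0}(\gamma)/L_n$ -- is a genuinely different and cleaner device than the paper's skeleton-by-skeleton radial pushing (their Lemma \ref{lem:g_s geodesics} and its Corollary), and that part of your argument is correct.

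The genuine gap is the step you describe as ``a short separate argument'': producing, for an arbitrary closed $g$-curve $\gamma$, a freely homotopic curve $\gamma'$ with $\ell_{g_0}(\gamma') \leq C_n\,\ell_g(\gamma)$ where $C_n$ depends only on $n$. This is not a routine push-off; it is the main technical content of the paper's proof of (2). The identity map $(M,g) \to (M,g_0)$ is bi-Lipschitz only with a constant depending on the triangulation (the paper's remark after the construction makes this explicit): near a codimension-two stratum with angle \emph{excess}, the smoothed metric is much smaller than $g_0$ in the angular directions, so two points at distance $\sim\delta$ from a vertex can have $d_g \sim \delta/\theta_v$ while $d_{g_0} \sim \delta$, with $\theta_v$ (the cone angle) unbounded in terms of $n$. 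Consequently each excursion of $\gamma$ into the smoothing region can only be replaced at an \emph{additive} cost of order $\delta$ (times local combinatorics), and a $g$-geodesic may make an uncontrolled number of such excursions, so the total replacement cost is not bounded by $C_n\,\ell_g(\gamma)$ without further work. The paper handles exactly this: it first proves the comparison only for curves meeting each simplex in at most $K$ components, choosing $\delta < \sys_{g_s}(M)/(16\mu K \|\T\|)$ so the accumulated additive error is at most $\tfrac12\sys_{g_s}(M) \le \tfrac12\ell_{g_s}(\alpha)$ (Lemma \ref{lem:g-polygonal paths}), and then removes the bounded-components hypothesis by the four-step cut-and-reassemble surgery on $\gamma$, which is where the final constant $\kappa_n$ actually comes from. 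As written, your proposal assumes the conclusion of that entire portion of the argument; to complete it you would need either a proof that some freely homotopic representative avoids (or cheaply crosses) the smoothing region with dimensional constants -- i.e., an analogue of Lemma \ref{lem:g-polygonal paths} plus the surgery -- or a different smoothing scheme for which $g \geq c_n g_0$ pointwise, which is impossible near strata of angle excess precisely because a smooth metric forces total angle $2\pi$ at small scales.
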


The idea behind the proof is to put a piecewise Euclidean metric on $M$, by making each
$n$-dimensional simplex in the triangulation $\T$ isometric to a Euclidean simplex with all edges of equal length, and of
volume $=1$. This metric has singularities along the codimension two strata, which can be inductively smoothed out. 
This gives a metric $g$ satisfying property (1). 
For property (2), one can easily reduce to the case that $\g$ is a $g$-geodesic which is not null-homotopic.  
From there, we remove the sections of $\g$ near the codimension 2 skeleton and, in a Lipschitz manner, replace them with geodesic segments in the singular metric. This results in a loop of roughly comparable length in the singular metric, and
property (2) is easy to establish for the singular metrics.
The details of this argument can be found in Section \ref{section:riemannian-to-discrete}.

In the proof of Theorem \ref{thm:riemannian-to-discrete} we only use the assumption that the triangulation $\T$ is compatible with the smooth structure on $M$ in one spot:  when using a smooth partition of unity to patch together locally defined metrics in the construction of the Riemannian metric $g$.  We need the metric $g$ to be smooth in order to use Theorem \ref{thm:riemannian-to-discrete} to prove one implication in our {\bf Main Theorem}.  But if all one requires is a $C^0$-Riemannian metric satisfying the two statements in  Theorem \ref{thm:riemannian-to-discrete}, then the assumption can be weakened to $\T$ being a \emph{piecewise linear} triangulation of $M$.  Our technique of proof does not extend to continuous triangulations, unfortunately.  See Section \ref{section:concluding-remarks} for a further discussion.
As a final remark about Theorem \ref{thm:riemannian-to-discrete}, we observe that by simply scaling the metric $g$, one may obtain equality in property (1) above at the cost of slightly altering the Lipschitz constant $\kappa_n$ in property (2).

\vskip 10pt

Using these two theorems, the proof of the {\bf Main Theorem} is easy.

\begin{proof}[Proof of Main Theorem]

{\bf ($\Rightarrow$)} Assume you have a class $\mathcal M$ of smooth $n$-manifolds satisfying condition (1) of the theorem, i.e.
satisfying a Riemannian systolic inequality. Let $\T$ be a smooth triangulation of a manifold $M\in \mathcal M$ lying within the class,
and $\epsilon >0$ an arbitrary positive constant.
Let $g$ be the Riemannian metric on $M$ whose existence is provided by our Theorem \ref{thm:riemannian-to-discrete}, 
$\gamma$ the closed $g$-geodesic whose length realizes the Riemannian systole of $(M,g)$, and $p$ the edge path freely
homotopic to $\gamma$ given by Theorem  \ref{thm:riemannian-to-discrete}. Then we 
have the sequence of inequalities:
\begin{align*}
\sys_\T(M) &\leq  l_{\T}(p) \leq  \kappa_n l_{g}(\g) = \kappa_n \cdot \sys _g(M)\\
& \leq \kappa_n \cdot C \sqrt[n]{\vol_g(M)} \\
& \leq (\kappa_n\cdot C) \sqrt[n]{\vol_{\T}(M) +\epsilon} 
\end{align*}
Letting $\epsilon$ tend to zero, we see that the class $\mathcal M$ satisfies condition (2) of the theorem (i.e. satisfies a combinatorial systolic 
inequality), with constant $C^\prime = \kappa_n \cdot C$.

\vskip 5pt

\noindent {\bf ($\Leftarrow$)} Conversely, let us assume that you have a class $\mathcal M$ of smooth $n$-manifolds satisfying condition 
(2) of the theorem, i.e. satisfying a combinatorial systolic inequality. Let $g$ be an arbitrary Riemannian metric on one of the manifolds 
$M\in \mathcal M$ lying within the class.  Let $\T$ be the smooth triangulation of $M$ obtained by applying our Theorem 
\ref{thm:discrete-to-riemannian}. We denote by $E$ the supremum of the $g$-lengths of edges in $\T$, and by $v$ the infimum of the
volume of top dimensional simplices in $\T$. So by Theorem \ref{thm:discrete-to-riemannian}, we have that $\frac{E}{v^{1/n}} \leq \delta_n$.
Let $p$ be an edge path in the triangulation $\T$ which realizes the combinatorial systole. 
Then we have the series of inequalities:
\begin{align*}
\sys_g(M) &\leq  l_{g}(p) \leq E\cdot l_\T(p) = E\cdot \sys_\T(M)   \\
& \leq C^\prime \cdot E\cdot \sqrt[n]{\vol_\T(M)} \leq C^\prime \cdot E\cdot \sqrt[n]{\frac{\vol_g(M)}{v}} = \delta_n C^\prime \cdot \sqrt[n]{\vol_g(M)}
\end{align*}
Thus, we see that the class $\mathcal M$ satisfies condition (1) of the theorem (i.e. satisfies a Riemannian systolic inequality), with 
constant $C= \delta_n \cdot C^\prime$. This concludes the proof of our {\bf Main Theorem}.
\end{proof}

\vskip 10pt

After the proof of Theorem \ref{thm:riemannian-to-discrete}, we discuss some applications of our {\bf Main Theorem}  in Section \ref{section:applications}.  Our paper concludes with a discussion about some open problems in Section \ref{section:concluding-remarks}, and an Appendix listing some general topology results due to Whitney \cite{Whitney} which are used in Section \ref{section:whitneyarg}.

\begin{rem} Most of Sections \ref{section:discrete-to-riemannian}, \ref{section:whitneyarg}, and \ref{section:applications} are contained in the Ph. D. Thesis of Ryan Kowalick \cite{Kowalick}. Similar results were independently obtained by 
de Verdi\`{e}re, Hubard, and de Mesmay \cite{VHM}. Their results are focused on the $2$-dimensional closed surfaces case (and includes other applications), while in the present paper we are able to deal with all dimensions.
\end{rem}

\subsection*{Acknowledgments} The authors would like to thank Dylan Thurston for some helpful comments.  The work of the second author was partially supported by the NSF, under grants DMS-1207782 and DMS-1510640. 


\section{Background material and notation}\label{section:background}

Suppose $X$ is a metric space, with $S$, $T \subset X$. We define the \emph{distance between $S$ and $T$} by 
\begin{equation*}
  \dist(S,T) = \inf\{ d(s,t) \,:\, s \in S, t \in T\}
\end{equation*}
and we note that this definition remains unchanged in the event that either $S$ or $T$ consists of a single point.
Also, we define the \emph{ $r$-neighborhood of $S$ in $X$}, denoted $U_r(S)$ to be
\begin{equation*}
  U_r(S) = \{ x \in X \,:\, \dist(x, S) < r \}.
\end{equation*}

In this paper all manifolds, (Riemannian) metrics, and (simplicial) triangulations are assumed to be smooth.  A \emph{triangulated manifold} is a tuple $(M, \T)$ where $M$ is a manifold and $\T$ is a triangulation.  When there is the possibility of confusion, we will denote the triangulation of a manifold $M$ by $\T_M$ instead of $\T$.  We also may abuse notation and use either $\T$ or $\T_M$ to denote $M$ when confusion will not arise.  A \emph{filling} of a closed triangulated $n$-dimensional manifold $(M,\T_M)$ is a triangulated $(n+1)$-dimensional manifold $(N,\T_N)$ with $\partial N = M$ and $\T_N |_{\partial N} = \T_M$.

If $\T$ is a simplicial complex, the \emph{$k$-skeleton of $\T$}, denoted $\T^{(k)}$, will refer to the subcomplex of $\T$ consisting of all simplices of dimension at most $k$.
A \emph{facet} of a triangulation is a simplex of maximal dimension.
For any triangulation $\T$, the notation $|\T|$ will refer to the number of facets in the triangulation. 
In the case of a triangulated manifold, this will be used as a discrete analogue of volume.

The \emph{systole} of a Riemannian manifold $(M,g)$, denoted $\sys_g(M)$, is the length of the shortest non-contractible loop in $M$. 
The \emph{homological systole} of a Riemannian manifold $(M,g)$, denoted $\sys^{H}_g(M)$, is the length of the shortest homologically nontrivial loop in $M$. 

If $p$ is an edge path in the triangulated manifold $(M,\T)$, the \emph{discrete length} of $p$, denoted $l_{\T}(p)$, will be the number of edges in $p$. 
The \emph{discrete systole} of a triangulated manifold $\T$, denoted $\sys_{\T}(M)$, will refer to the discrete length of the shortest non-contractible edge loop in $\T$. 
The \emph{discrete homological systole}, denoted $\sys ^H_{\T}(M)$, is defined analogously.

Let $S \subset \R^m$.
By a \emph{secant vector} in $S$, we mean any $v = t(x-y)$ where $x, y \in S$ and $t \in \R$.  
We define a \emph{secant simplex} of $S$ to be the convex hull (in $\R^m$) of an affinely independent collection of points in $S$.

If $\sigma$ is an $n$-simplex in $\R^m$, we define it's \emph{fullness} to be
\begin{equation*}
  \Theta(\sigma) = \frac{\text{Vol}_n(\sigma)}{(\diam \sigma)^n},
\end{equation*}
where $\text{Vol}_n$ denotes the $n$-dimensional Hausdorff volume in $\R^m$.
We also note that the diameter, $\diam \sigma$, is the length of the longest side in this case.

Let $P$ be an affine subspace in $\R^m$.  
The function $\pi_P: \R^m \rightarrow P$ will always denote orthogonal projection.  
A point $p \in P$ defines the vector space
$$ V_p(P) = \{ v \in \R^m \,| \, p + v \in P \}. $$
We will frequently identify $P$ with $V(P)$ when there is no ambiguity.  
In particular, if $v = p + w$ is a vector lying in $P$ (and with $w$ lying in $V_{P}$), then define $|v|_{P} = |w|_{\R^m}$.

Suppose $P_1, P_2$ are affine subspaces of $\R^m$ and $f: P_1 \to P_2$ is an affine map. 
Then for any $p \in P_1$, $f$ induces a linear transformation
\begin{equation*}
  V(f) \colon V_p(P_1) \to V_{\pi_{P_2}(p)}(P_2).
\end{equation*}
This map does not depend on the choice of $p$ and is uniquely determined by $f$.
We may then consider $V_p(P_1)$ and $V_{\pi_{P_2}(p)}(P_2)$ as linear subspaces of $\R^m$ and likewise consider $V(f)$ as a linear map between subspaces of $\R^m$. 
Thinking of $V(f)$ in this way allows us to speak of $|v - f(v)| := |w - V(f)(w)|$ for $v =w + p\in P_1$.
Throughout Sections \ref{section:discrete-to-riemannian} and \ref{section:whitneyarg}, expressions such as $|v - f(v)|$ for $v \in P_1$ will always be interpreted this way.

If $M^n \subset \R^m$ is a smooth, embedded submanifold of $\R^m$ and if $p \in M$, we may identify the tangent space $T_p M$ with an affine $n$-plane $P_p \subset \R^m$, whose points are of the form $p + v$ where $v \in T_pM$.
Note that there are two ways to view the projection map onto $P_p$: if we view $P_p$ as an affine subspace of $\R^m$ then $\pi_P$ is the orthogonal projection from $\R^m$ onto $P_p$, while if we are considering $P_p = T_p M$ for $p \in M$, $\pi_p$ then denotes the orthogonal projection from $M$ to $P_p$.  In either case, our meaning will always be clear from context.

Note that $\pi_p|_M \colon M \to P_p$ is regular at $p$, and so there exists a neighborhood $U \subset M$ of $p$ so that $\pi_p \colon U \to \pi_p(U)$ is a diffeomorphism.
For $\xi > 0$, let
\begin{equation*}
  P_{p, \xi} = U_\xi(p) \cap P_p.
\end{equation*}
where $U_\xi(p)$ denotes the open $\xi$ ball about $p$ in $\R^m$.  
For $\xi$ sufficiently small, $\pi_p^{-1}(P_{p, \xi}) \cap M \subset U$ and we let
\begin{equation*}
  M_{p, \xi} = \pi_p^{-1}(P_{p,\xi}) \cap M.
\end{equation*}

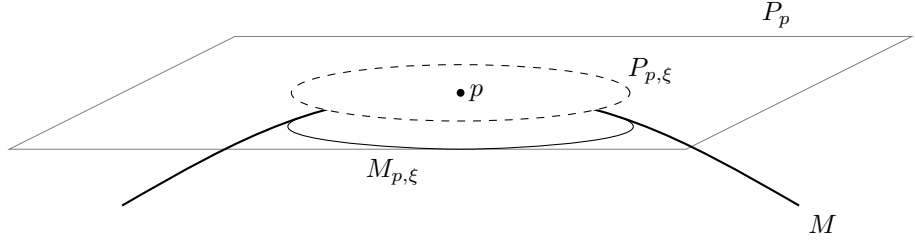
\begin{figure}
  \centering
  \begin{tikzpicture}[scale=1.5]
  \coordinate (p1) at (-4,-0.5);
  \coordinate (p2) at (2,-0.5);
  \coordinate (p3) at (4,0.5);
  \coordinate (p4) at (-2,0.5);
  \coordinate (labelPp) at ($(p3)!0.8!(p4)$);
  \coordinate (p) at (0,0);
  \coordinate (m1) at (-3,-1);
  \coordinate (m2) at (3,-1);
  \draw [white, name path=mxi, thin] (p) ellipse (1.6cm and 0.46cm);
  \draw [help lines] (p1) -- (p2) -- (p3) -- (p4) -- cycle;
  \node [above] at ($(p3)!0.2!(p4)$) {$P_p$};
  \draw [thick, name path=m1] (m1) to [out=30, in=180] (p);
  \draw [thick, name path=m2] (m2) to [out=150, in=0] (p) ;
  \node [below right] at (m2) {$M$};
  \draw [thin, name intersections={of=m1 and mxi}]
  (intersection-1) to [out=-160, in=180] ([yshift=-0.5cm]p);
  \draw [thin, name intersections={of=m2 and mxi}]
  (intersection-1) to [out=-20, in=0] ([yshift=-0.5cm]p);
  \node [below] at ([xshift=-0.6cm,yshift=-0.5cm]p) {$M_{p,\xi}$};

  \draw [dashed, fill=white] (p) ellipse (1.5cm and 0.25cm);
  \node [above right] at ([xshift=1.4cm]p) {$P_{p,\xi}$};
  \draw [fill] (p) circle [radius=0.03];
  \node [right] at (p) {$p$};
\end{tikzpicture}
  \caption{$P_{p,\xi}$ and $M_{p,\xi}$}
\end{figure}

If $\sigma^n$ is an $n$-simplex in $\R^m$, it spans an affine $n$-plane, \emph{the plane of $\sigma$}, which we will denote by $P(\sigma)$.

For affine subspaces $P, P'$ of $\R^m$, define the \emph{independence of $P$ and $P'$} to be
\begin{equation*}
  \ind(P,P') = \inf \{|v - \pi_P(v)| \,| \, v \in P', |v| = 1\}.
\end{equation*}
Note that this quantity is symmetric, and does does depend on any particular choices made; in fact, it only depends on $V(P)$ and $V(P')$.
We also note that $\ind(P, P') = 0$ if the planes have a vector in common, and $\ind(P, P') = 1$ if and only if the planes are orthogonal.
\subsection*{Tubular neighborhoods and horizontal tangent vectors}\label{horztangsect}
Suppose $M$ is an embedded submanifold of $\R^m$, and let $U$ be a tubular neighborhood of $M$ in $\R^m$.  
Then the projection map $\pi^* \colon U \to M$ is a Riemannian submersion.
So $TU \cong TU_h \oplus TU_v$ where $TU_h$ is canonically isomorphic to the tangent bundle $TM$ of $M$, and similarly for $TU_v$ and the normal bundle $TN$ of $M$ in $\R^m$.  
We will refer to $TU_h$ as the \emph{horizontal} component of $TU$ and $TU_v$ as the \emph{vertical} component of $TU$.  
So for $q \in U$ and $w \in T_qU$, we will write $w = w_h + w_v$ where $w_h \in T_qU_h$ and $w_v \in T_qU_v$.  
Also note that for any point $q \in U$, the space $T_q U_v$ is equal to the kernel of the derivative of the projection map $D\pi_{\pi^* (q)} = \pi_{\pi^* (q)}$.
So if $w \in T_q U$ and $w = w_h + w_v$, then 
\begin{equation}\label{horizontal equality}
|w_h| = |\pi_{\pi^*(q)}(w)|.
\end{equation}

Now, for any $p \in M$, the map
\begin{equation*}
  D\pi^* \biggr|_{T_pU_h} \colon T_p U_h \to T_p M
\end{equation*}
is the identity. Thus for any $\epsilon > 0$, there is a smaller tubular neighborhood $U' \subset U$ so that, for any $q \in U'$, the map
\begin{equation*}
  D\pi^* \biggr|_{T_qU_h'} \colon T_q U'_h \to T_{\pi^*(q)} M
\end{equation*}
has the property that, for any $w \in T_q U'_h$,
\begin{equation}\label{nearid}
  \frac{1}{\sqrt{3/2}} |w| \leq |D\pi^*(w)| \leq \sqrt{3/2}|w|.
\end{equation}


\section{Encoding a smooth triangulation}\label{section:discrete-to-riemannian}

In this section we prove Theorem \ref{thm:discrete-to-riemannian}. Namely, given a smooth Riemannian manifold $(M,g)$, 
we want to construct a triangulation where the ratio 
$$\frac{\sup_{e\subset \T}\{l_g(e)\}}{\inf_{\sigma \subset \T}\{\sqrt[n]{\vol_g(\sigma)}\}}$$
is uniformly bounded above by a constant $ \delta _n$ which only depends on the dimension of $M$.

We do this by using the following result, which will be proved in Section \ref{section:whitneyarg}.
 \begin{thm}\label{euctriangulation}
  Let $M$ be a compact $n$-dimensional smooth Riemannian submanifold of $\R^m$.
  Then there is an $n$-dimensional simplicial complex $T \subset \R^m$ with the
  following properties:
  \begin{enumerate}
    \item Each simplex of $T$ is a secant simplex of $M$
    \item $T$ is contained in a tubular neighborhood of $M$.
      The projection $\pi^*$ from this neighborhood onto $M$ induces a homeomorphism $\pi^*: T \to M$.
    \item If $\sigma$ is a simplex of $T$ (of any dimension), then it's fullness is bounded below by $\Theta_{n,m}$, which depends only on the dimensions of the manifold and the ambient space.
    \item For any $n$-simplex $\sigma$ of $T$, point $q \in \sigma$ and tangent vector $v \in T_q\sigma$, we get that
      \begin{equation}\label{vectproj}
        |\pi_{\pi^*(q)}(v)| \geq \frac{1}{2}|v|,
      \end{equation}
      where $\pi_{\pi^*(q)}$ is the orthogonal projection onto the tangent plane $P_{\pi^*(q)}$.
    \item If $L$ is the length of an edge in $T$, then
      \begin{equation}\label{edgelengthbounds}
         C_{n,m}\bar L \leq L \leq \bar L
      \end{equation}
      for some positive $\bar L$ and constant $C_{n,m}$ depending only on $n$ and $m$.
  \end{enumerate}
\end{thm}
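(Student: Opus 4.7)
The plan is to follow Whitney's classical construction, adapted to keep quantitative control over the shape and scale of the resulting simplices. Fix a small parameter $\eta > 0$, to be chosen based on the extrinsic geometry of $M \subset \mathbb{R}^m$ (second fundamental form, injectivity radius of the normal bundle, size of a tubular neighborhood), and take the standard cubulation of $\mathbb{R}^m$ of side length $\eta$. After a generic small translation, I may assume $M$ is in general position with respect to this cubulation, so that $M$ meets each $k$-cube transversely in $\mathbb{R}^m$; in particular $M$ hits each $(m-n)$-cube in a finite set of points. The vertex set $V$ of the candidate triangulation $T$ will be the union of these transverse intersection points.

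Next I would build the simplicial complex on $V$ using Whitney's prescription: for each $m$-cube $C$ that meets $M$, assemble the points of $V$ lying in $C$ and its faces, and triangulate them by a canonical rule (for example, the iterated barycentric scheme) so that inside each such $C$ we obtain an $n$-dimensional secant-simplex complex, with matching faces across adjacent $m$-cubes. This immediately gives Property~(1). Choosing $\eta$ small relative to the tubular-neighborhood radius of $M$ guarantees that $T$ lies inside a tubular neighborhood and that the orthogonal projection $\pi^* \colon T \to M$ is injective; a degree argument on each fiber, together with compactness, upgrades this to a homeomorphism, giving Property~(2).

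The main technical obstacle is Property~(3), the uniform fullness bound $\Theta_{n,m}$. The key observation is that at scales below a threshold determined by the curvature of $M$, any piece $M_{p,\xi}$ is a $C^1$-small graph over its tangent plane $P_p$. Consequently the intersection pattern of $M$ with a single $m$-cube $C$ is combinatorially and metrically $(1+o(1))$-close, as $\eta \to 0$, to the intersection pattern of some affine $n$-plane with the unit $m$-cube. For the latter model problem, the simplices produced by Whitney's triangulation rule have fullness uniformly bounded below by a constant depending only on $n$ and $m$: this is a compactness statement over the Grassmannian of affine $n$-planes meeting the unit $m$-cube, using that no such plane can produce a degenerate intersection pattern. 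Taking $\eta$ small enough to make the deviation between $M$ and its linearization negligible then transfers the bound to $T$, yielding $\Theta_{n,m}$. This compactness-plus-linearization step is the crux of the argument, and dictates how fine the cubulation must be.

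For Properties~(4) and~(5), once $\eta$ is sufficiently small, every $n$-simplex $\sigma \in T$ has all its vertices on $M$, and a standard Taylor estimate using the second fundamental form shows that the affine plane $P(\sigma)$ has $\ind(P(\sigma), P_{\pi^*(q)}) \to 0$ uniformly as $\eta \to 0$, for every $q \in \sigma$. Shrinking $\eta$ further so this independence is at most a small dimensional constant makes the orthogonal projection from $T_q\sigma$ to $P_{\pi^*(q)}$ distort lengths by a factor at most $\sqrt{3/2}$, and in particular yields the inequality \eqref{vectproj}. Finally, the diameters of all simplices of $T$ are comparable to $\eta$ from above (bounded by $\sqrt{m}\,\eta$) and from below (bounded below by a dimensional multiple of $\eta$, by the fullness bound in Property~(3) applied to edges), so setting $\bar L = \sqrt{m}\,\eta$ and letting $C_{n,m}$ be the ratio of these two bounds establishes \eqref{edgelengthbounds} and completes the proof.
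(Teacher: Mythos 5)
There is a genuine gap at the step you yourself identify as the crux, namely the uniform fullness bound of Property~(3). Your plan is to take a generic small translation of the standard cubulation, use the transverse intersection points of $M$ with the $(m-n)$-faces as the vertex set, and then argue by linearization plus compactness over affine $n$-planes meeting the unit cube that the resulting simplices have fullness bounded below by a dimensional constant. But the model problem does \emph{not} admit such a uniform bound: an affine $n$-plane that passes arbitrarily close to an $(m-n-1)$-face of the cube meets two adjacent $(m-n)$-faces in points that are arbitrarily close together (and the limiting intersection pattern is degenerate), so the secant simplices produced by any fixed triangulation rule have fullness, and edge length relative to the mesh $\eta$, tending to zero. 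The family of planes is not bounded away from these degenerate configurations, so the claimed compactness argument fails. A generic translation of the cubulation gives you transversality, i.e.\ $\dist(M,\;(m-n-1)\text{-skeleton})>0$, but it gives no lower bound on this distance of the form (dimensional constant)$\times\,\eta$, and without such a quantitative separation the constants $\Theta_{n,m}$ and $C_{n,m}$ cannot depend only on $n$ and $m$. This also undermines your Properties~(4) and~(5): the Taylor/tilt estimate for $P(\sigma)$ against $P_{\pi^*(q)}$ is only valid for simplices that are quantitatively full with edge lengths bounded below (a thin secant simplex at small scale can have its plane far from the tangent plane), and the lower bound on edge lengths does not follow from ``fullness applied to edges'' (every $1$-simplex has fullness $1$); in the paper both facts are consequences of the same separation estimate you are missing.

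The paper (following Whitney) closes exactly this gap by \emph{perturbing the vertices} of the barycentric subdivision $L$ of the cubulation, rather than merely translating the cubulation: the vertices $p_i^*$ of the new complex $L^*$ are chosen recursively, via a volume-counting argument (Lemma~\ref{lemma:missingLowDimensionalSimplices}), so that every simplex $\tau^r$ of $L^*$ with $r\le m-n-1$ satisfies $\dist(M,\tau^r)>\alpha_r\delta$ with $\alpha_r$ depending only on $n$ and $m$. This quantitative separation forces the intersection points $\psi(\tau^s)$ to have barycentric coordinates bounded below by $2\alpha$, which yields altitude bounds $\alt(\sigma^r)\ge rb$, hence the fullness bound $\Theta_1$ and the edge-length lower bound $b/2$ for the order complex $K$, and these survive the final replacement of vertices by their projections onto $M$ (Lemma~\ref{21a}). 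If you want to salvage your outline, you must replace ``generic translation'' by such a quantitative vertex-moving step (or otherwise prove a definite lower bound, proportional to the mesh, on the distance from $M$ to the $(m-n-1)$-skeleton with constants depending only on $n,m$); the rest of your outline for Properties~(1), (2), (4), (5) then goes through along the same lines as the paper.
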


Let $M$ be an $n$-dimensional Riemannian manifold.
By the Nash Isometric Embedding theorem \cite{Nash}, $M$ embeds smoothly and isometrically into $\R^m$, where $m$ depends only on $n$.
Thus we may consider the case where $M$ is a smooth Riemannian submanifold of $\R^m$.
Then applying Theorem \ref{euctriangulation}, we get an $n$-dimensional simplicial complex $T$ contained in a tubular neighborhood of $M$ so that the tubular neighborhood projection $\pi^*$ induces a homeomorphism from $T$ to $M$.

We will proceed in two parts. 
The first part will consist of showing that the restriction of $\pi^*$ to any $n$-simplex of $T$ is bi-Lipschitz with constants that do not depend on the given simplex. 
In the second part we will prove Theorem \ref{thm:riemannian-to-discrete} by using this fact to relate the geometry of $T$ with the geometry of $\pi^*(T)$.

\subsection{$\pi^*$ is bi-Lipschitz on every $n$-simplex of $T$}

Let $\sigma$ be an $n$-simplex of $T$ and suppose $x_1, x_2 \in \sigma$.
Let $p_1 = \pi^*(x_1)$ and $p_2 = \pi^*(x_2)$.

Suppose $p$ is a unit-speed geodesic in $\sigma$ from $x_1$ to $x_2$.
For every $t \in [0, \ell(p)$], $p'(t)$ is a tangent vector in $\sigma$, so by Theorem \ref{euctriangulation} and equation (\ref{horizontal equality}), 
\begin{equation}\label{horzcomp}
  1 = |p'(t)| \geq |p'(t)_h| = |\pi_{\pi^* p(t)}(p'(t))| \geq \frac{1}{2}|p'(t)| = \frac{1}{2}.
\end{equation}
Now $\pi^* \circ p$ is a path on $M$ from $p_1$ to $p_2$, and since $M$ is isometrically embedded in $\R^m$,
\begin{equation}\label{distlength}
  d_M(p_1, p_2 ) \leq \ell(\pi^* \circ p).
\end{equation}
Combining (\ref{nearid}) and (\ref{horzcomp}), we get that
\begin{align*}
  \ell(\pi^* \circ p) &= \int_0^{\ell(p)} |D\pi^*_{p(t)}(p'(t))| \,dt\\
  &= \int_0^{\ell(p)} |D\pi^*_{p(t)}(p'(t)_h)| \,dt\\
  &\leq \sqrt{3/2}\int_0^{\ell(p)} |p'(t)_h| \,dt\\
  &\leq \sqrt{3/2}\int_0^{\ell(p)} \,dt\\
  &= \sqrt{3/2}\cdot \ell(p).
\end{align*}
Combining the above with (\ref{distlength}) gives that
\begin{equation}\label{upperbd}
  d_M(p_1, p_2) \leq \sqrt{3/2} \cdot |x_1 - x_2|.
\end{equation}

Now suppose $\gamma$ is a unit-speed geodesic in $M$ from $p_1$ to $p_2$ so that $d_M(p_1, p_2) = \ell(\gamma)$.
Then $(\pi^*)^{-1} \circ \gamma$ is a piecewise smooth path in $T$ from $x_1$ to $x_2$.
We may take a partition of the interval $[0, \ell(\gamma)]$ into
\begin{equation*}
  0 = a_0 <  a_1 <  a_2 < \cdots  a_N = \ell(\gamma)
\end{equation*}
so that for each $i$, $(\pi^*)^{-1} \circ \gamma([a_i, a_{i+1}]) \subset \sigma_i$ where $\sigma_i$ is an $n$-simplex of $T$.
Let $\gamma|_{[a_i, a_{i+1}]} = \gamma_i$ and let $(\pi^*)^{-1}\circ \gamma(a_i) = b_i$.

Then $(\pi^*)^{-1} \circ \gamma_i$ is a path in $\sigma_i$ and for every $t \in [a_i, a_{i+1}]$, $(D\pi^*)^{-1}(\gamma_i'(t))$ is a tangent vector in $\sigma_i$. So for every $t \in [a_i, a_{i+1}]$, \eqref{horzcomp} gives that
\begin{equation}\label{pihorzcomp}
  |(D\pi^*)^{-1}(\gamma_i'(t))| \leq 2|[(D\pi^*)^{-1}(\gamma_i'(t))]_h|.
\end{equation}
Suppose $D\pi^*(v) = w$ for $v \in T_p \sigma_i$.
Then $w = D\pi^*(v_h)$ and \ref{nearid} gives that
\begin{equation*}
  |v_h| \leq \sqrt{3/2} |w|.
\end{equation*}
Using the above, we get that, for any $t \in [a_i, a_{i+1}]$,
\begin{equation}\label{piinvh}
  |[(D\pi^*)^{-1}(\gamma_i'(t))]_h| \leq \sqrt{3/2} |\gamma_i'(t)|.
\end{equation}
Combining \eqref{pihorzcomp} and \eqref{piinvh} gives that
\begin{align*}
  |b_i - b_{i+1}| &\leq \ell((\pi^*)^{-1} \circ \gamma)\\
  &= \int_0^{a_{i+1}-a_i} |(D\pi^*)^{-1}(\gamma_i'(t))| \,dt\\
  &\leq 2\int_0^{a_{i+1}-a_i} |[(D\pi^*)^{-1}(\gamma_i'(t))]_h| \,dt\\
  &\leq 2\sqrt{3/2}\int_0^{a_{i+1}-a_i} |\gamma_i'(t)| \,dt\\
  &\leq 2\sqrt{3/2}\cdot d_M(\gamma(a_i), \gamma(a_{i+1})).
\end{align*}

Since $\gamma$ is a minimizing geodesic, $\sum d_M(\gamma(a_i),\gamma(a_{i+1})) = d_M(p_1,p_2)$.
So
\begin{equation}\label{lowerbd}
  |x_1 - x_2| \leq \sum_0^{N+1} |b_i - b_{i+1}| \leq 2\sqrt{3/2} \sum_0^{N+1}d_M(\gamma(a_i), \gamma(a_{i+1})) = 2\sqrt{3/2} \cdot d_M(p_1, p_2).
\end{equation}
Combining \eqref{upperbd} and \eqref{lowerbd} gives that, for any $x_1, x_2 \in \sigma$,
\begin{equation}\label{bilip}
  \frac{1}{2\sqrt{3/2}}\cdot |x_1 - x_2| \leq d_M(\pi^*(x_1), \pi^*(x_2)) \leq \sqrt{3/2}\cdot |x_1 - x_2|.
\end{equation}

\subsection{The geometry of $\pi^*(T)$}

By the previous section, if $e$ is an edge of the complex $T$, then
\begin{equation}\label{edge-bilip}
  \frac{1}{2\sqrt{3/2}}\ell(e) \leq \ell(\pi^*(e)) \leq \sqrt{3/2}\ell(e).
\end{equation}
Letting $\sigma$ be an $n$-simplex of $T$, it then follows that
\begin{equation}\label{volcomp1}
  \left(\frac{1}{2\sqrt{3/2}}\right)^n\vol_n(\sigma) \leq \vol_M(\pi^*(\sigma)) \leq (\sqrt{3/2})^n\vol_n(\sigma).
\end{equation}

\subsection{Proof of Theorem \ref{thm:discrete-to-riemannian}}

Let $\sigma \in \pi^*(T)$ be the simplex in $M$ for which $\vol_M(\sigma)$ is minimal among all simplices in $\pi^*(T)$.
Let $E$ be the length of the longest edge in $\pi^*(T)$, and let $L$ be the length of the longest edge in $\sigma$.  
So there exist edges $e,l \in T$ such that $E = \ell(\pi^*(e))$ and $L = \ell(\pi^*(l))$.  
By equations \eqref{edge-bilip} and \eqref{edgelengthbounds} we have that
\begin{equation*}
E = \ell(\pi^*(e)) \leq \sqrt{3/2} \ell(e) \leq \sqrt{3/2} \bar{L} \leq \sqrt{3/2} \frac{L}{C_{n,m}}
\end{equation*}
So,
\begin{equation}\label{edgebd}
  E^n \leq (\sqrt{3/2})^n \frac{L^n}{C^n_{n,m}} .
\end{equation}
By \eqref{bilip}, we have that
\begin{equation}\label{lengthbd}
 L \leq \sqrt{3/2}\diam \left((\pi^*)^{-1} \sigma\right).
\end{equation}
Using \eqref{edgebd}, \eqref{lengthbd}, \eqref{volcomp1}, and Theorem \ref{euctriangulation} (3) we obtain
\begin{align*}
  \frac{\vol_M(\sigma)}{E^n} &\geq \left( \frac{C_{n,m}}{\sqrt{3/2}} \right)^n \frac{\vol_M(\sigma)}{L^n}\\
  &\geq \frac{C^n_{n,m}}{2^n(\sqrt{3/2})^{2n}} \frac{\vol_n(\pi^*)^{-1}(\sigma)}{L^n}\\
  &\geq \frac{C^n_{n,m}}{2^n(\sqrt{3/2})^{3n}} \frac{\vol_n(\pi^*)^{-1}(\sigma)}{(\diam (\pi^*)^{-1}\sigma)^n}\\
  &\geq \frac{C^n_{n,m}}{(\sqrt{27/2})^n} \Theta_{n,m}.
\end{align*}
Since $m$ depends only on $n$, we have proved Theorem \ref{thm:riemannian-to-discrete}, with the value
\begin{equation*}
  \delta_n = \frac{3 \sqrt{3/2}}{C_{n,m}\Theta^{1/n}_{n,m}}.
\end{equation*}


\section{Whitney's triangulation procedure}\label{section:whitneyarg}

In this Section, we analyze in detail Whitney's triangulation argument, and establish the technical Theorem \ref{euctriangulation}.
This result was mentioned at the beginning of the previous Section \ref{section:discrete-to-riemannian}, where it was used
to prove Theorem \ref{thm:discrete-to-riemannian}. 

This Section very closely parallels the procedure outlined by Whitney \cite{Whitney} to prove smooth manifolds are triangulable,
and proofs which follow directly from \cite{Whitney} are omitted for brevity and readability.
The idea is to cubulate $\R^m$ and take the cubulation's barycentric subdivision.
We then move the vertices to ensure $M$ is far from the $(m-n-1)$-skeleton of the complex.
Using the poset of intersections of simplices of dimensions $(m-n),\dots,m$ gives us a simplicial complex that sits inside a tubular neighborhood of $M$.
We then prove the tubular neighborhood projection induces a diffeomorphism onto $M$.

We are using Whitney's construction with a slightly different purpose in mind: to produce triangulations whose fullness only depends on the dimension of the manifold in question. 
This means some minor modifications need to be made, mostly relating to the choosing of certain quantities.  

Since $T$ denotes the triangulation in Theorem \ref{euctriangulation}, throughout this Section we use $P_p$ and $P_p^*$ (where $p \in M$) to denote the tangent and normal planes $T_pM$ and $N_pM$, respectively.

\subsection{Quantities used in the proof}
First, some remarks on the barycentric subdivision of a cube in $\R^m$ with side length $h$.
This breaks up the cube into $2^mm!$ simplices, where the longest edge is the one connecting a corner of a cube to its center, which is of length $h\sqrt{m}/2$. 
Thus each $m$ simplex has volume $h^m/(2^mm!)$ and diameter $h\sqrt{m}/2$, which gives a fullness of $1/(m!m^{m/2})$.
So any barycentric subdivision of a cubical subdivision of $\R^m$ forms a triangulation of $\R^m$ where the fullness of each simplex is $2\Theta_0 = 1/(m!m^{m/2})$.
Let $N$ be the maximum number of simplices in any star of any vertex in such a triangulation.

Let $\rho^*$ be given by Lemma \ref{aII,16a}.

By Lemma \ref{14c}, we may choose $\rho_0 < \text{min} \{1/(4m^{1/2}), 2\rho^*/m^{1/2} \}$ 
so that for any $m$-simplex $\sigma = \langle p_0, \cdots, p_m \rangle$, if
$\Theta(\sigma) \geq 2\Theta_0$, and $|q_i - p_i| \leq \rho_0 \cdot \diam \sigma$,
then $\tau = \langle q_0, \cdots, q_m \rangle $ is a simplex, with $\Theta(\tau) \geq \Theta_0$.

We choose $\rho_1$ as in Lemma \ref{rho1} so that
\begin{equation}\label{rho1small}
  0 <  \rho_1 < \frac{4}{\rho_0 \sqrt[s]{2}},
\end{equation}
where $s = m - n \geq 1$ will always denote the codimension of the embedding.
We then define the following constants, which depend only on $n$ and $m$:
\begin{equation}\label{17.2}
  \rho = \frac{\rho_0 \rho_1}{4}, \quad \alpha_r = \frac{\rho^r \rho_0 \rho_1}{2} \; (\text{for } 0 \leq r \leq s-2), \quad \alpha = \frac{\alpha_{s-1}}{4},\\
\end{equation}
\begin{equation}\label{17.3}
  \beta = \frac{\Theta_0 \alpha}{m^{1/2}N}, \quad \Theta_1 = \frac{\beta^n}{2^n}, \quad \gamma = \frac{(n-1)!\Theta_1\beta}{2}.
\end{equation}
The choice of $\rho_1$ in \eqref{rho1small} ensures that $\rho < 1/2$, $\alpha_{r+1} < \alpha_r$, and $\alpha < 1/4$.  Also, it is important to emphasize that $\beta$ only depends on $n$ and $m$, as the constants $\Theta_{m,n}'$ and $C_{n,m}$ from Theorem \ref{euctriangulation} will both depend on $\beta$.

Choose $\rho_0' \leq 1/4$ by Lemma \ref{14c} using $n$, $\Theta_1$, 
$\Theta_1/2$
in place of $r$, $\Theta_0$, $\epsilon$. Then let
\begin{equation}\label{17.4}
  \lambda = \inf\, \left\{ \frac{\alpha\gamma}{128},\,\frac{\rho_0'\alpha\beta}{8} \right\}.
\end{equation}
Note that the above constants all depend on $\rho_0$, $\rho_1$, $\rho_0'$, and $\rho^*$, which only depend on $n$ and $m$.  But these four constants can be chosen as small as we like, a fact that will be used a few times throughout the course of this section.

Using the notation of Section \ref{horztangsect}, we may also choose $\delta_0$ in Theorem \ref{10A} so that $U^* \subset U'$.

We then choose $\xi_0$ by Lemma \ref{8a}.

Now choose $\xi_1 \leq \xi_0$ in Lemma \ref{8b}. Finally, we define more constants:
\begin{equation}\label{17.5}
  \xi = \inf\, \left\{ \xi_1,\, \frac{\alpha \delta_0}{3\lambda} \right\},\quad \delta = \frac{\xi}{8}, \quad h = \frac{2\delta}{m^{1/2}},
\end{equation}
\begin{equation}\label{17.6}
  a = 2\alpha\delta, \quad b = \beta\delta, \quad c = \gamma \delta.
\end{equation}

\begin{rem}
We began this section by cubulating $\R^m$ by cubes with edge length $h$.  We then used Lemma's \ref{14c}, \ref{aII,16a}, and \ref{rho1} to find constants $\rho_0$, $\rho^*,$ and $\rho_1$.  It is important to note that,  while these constants are defined using a cubulation of $\mathbb R^n$, they do \emph{not} depend on the size $h$ of the cubulation. Indeed, the attentive reader will notice that these constants are set up to be scale invariant. As a result, it is not circular to redefine $h$ in equation \eqref{17.5}.
\end{rem}

\subsection{The complexes $L$ and $L^*$}
First, we let $L_0$ be a cubical subdivision of $\R^m$ with cubes of side length
$h$, and let $L$ be the barycentric subdivision of $L_0$. Then each edge of
$L$ has length at least $h/2$, and we may choose $h$ small enough so that the $m$-simplices have diameter $ \leq \delta$.

Suppose $L$ has vertices $\{p_i \}_{i \in \mathbb{N}}$. We are going to recursively construct a new 
triangulation of $\R^m$, $L^*$, whose $(s-1)$-skeleton is sufficiently far away 
from $M$. The vertices of this new complex will be denoted $\{p_i^* \}_{i \in \mathbb{M}}$. We 
will choose these vertices so that
\begin{equation}\label{18.1}
  |p_i^* - p_i| < \rho_0 \delta
\end{equation}
for all $i$. 
By Lemma \ref{aII,16a} and the definition of $\rho_0$, we get a new triangulation of $\R^m$. 
Since 
$\rho_0 < 1/(4m^{1/2}$), we get that $\rho_0 \delta < h/8$. The diameter of 
any simplex of $L$ is at least $h/2$, so any simplex $\tau$ of $L^*$ will satisfy
\begin{align*}
  \diam(\tau) &\geq h/2 - 2 \rho_0\delta\\
              &> h/2 - h/4\\
              &= h/4.
\end{align*}
Because the diameter of each simplex of $L$ is at most $\delta$, we also have that
\begin{align*}
  \diam(\tau) &< \delta + 2\rho_0\delta\\
              &< \delta + 2\frac{1}{4m^{1/2}}\delta\\
              &< 2 \delta.
\end{align*}
Combining the above, we obtain
\begin{equation}\label{18.2}
  h/4 < \diam(\tau) < 2\delta.
\end{equation}

By our choice of $\rho_0$ and \eqref{14.6} we have that, for all simplices $\tau$ of $L^*$ of dimension at least 1,
\begin{equation}\label{18.2b}
  \Theta(\tau) \geq \Theta_0.
\end{equation}

The following Lemma is proved in \cite{Whitney} pages 129-130.  
We include most of the proof below because some of the equations and techniques are necessary to prove Theorem \ref{euctriangulation}.

\begin{lem}\label{lemma:missingLowDimensionalSimplices}
Suppose $p_1^*, \dots, p_{i-1}^*$ have been found so that the complex 
$L_{i-1}^*$ with these vertices satisfies
\begin{equation}\label{18.3}
  \dist(M, \tau^r) > \alpha_r \delta 
\end{equation}
for all $\tau^r \in L^*_{i-1}$ and $r \leq s-1$.
Then there exists $p_i^*$ so that $|p_i^* - p_i| < \rho_0 \delta$ and \eqref{18.3} holds for $L_i^*$.
\end{lem}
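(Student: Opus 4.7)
The plan is a measure argument: the open ball $B := U_{\rho_0 \delta}(p_i) \subset \mathbb{R}^m$ of admissible perturbations has volume $c_m(\rho_0\delta)^m$, and I will show that the subset $\mathcal{B} \subset B$ of positions $q$ for which the modified complex $L_i^*$ (obtained from $L_{i-1}^*$ by using $q$ in place of $p_i$) fails \eqref{18.3} for some simplex has strictly smaller Lebesgue measure; any $p_i^* \in B \setminus \mathcal{B}$ then works. Since perturbing only $p_i$ affects only simplices in its star, the simplices of $L_i^*$ that do not already belong to $L_{i-1}^*$ are precisely those of the form $\tau^r = [q, \sigma^{r-1}]$, where $\sigma^{r-1}$ is an $(r-1)$-face of $L_{i-1}^*$ opposite $p_i$ in some $r$-simplex of $L$ through $p_i$. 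The number of such faces with $r \leq s-1$ is at most $N$ by definition.

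For a fixed such $\sigma = \sigma^{r-1}$, the contribution to $\mathcal{B}$ is
$$B_\sigma = \{ q \in B \,:\, \dist(M, [q,\sigma]) \leq \alpha_r \delta \}.$$
A point on $[q,\sigma]$ has the form $(1-\lambda)q + \lambda y$ for $y \in \sigma$ and $\lambda \in [0,1]$, so $q \in B_\sigma$ iff there exist $x \in M$, $y \in \sigma$, $\lambda \in [0,1]$ with $|(1-\lambda)q + \lambda y - x| \leq \alpha_r \delta$. Rearranging yields $q = (x - \lambda y)/(1-\lambda) + O(\alpha_r\delta/(1-\lambda))$. The inductive hypothesis gives $|x - y| > \alpha_{r-1}\delta$, while \eqref{18.2} bounds $|q - y| \leq 3\delta$; combined with $(1-\lambda)|q-y| \geq |x-y| - \alpha_r\delta$, this forces $1-\lambda \geq c_0 \alpha_{r-1}$ for some explicit $c_0 = c_0(n,m)$. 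Consequently $B_\sigma$ sits in an $O((\alpha_r/\alpha_{r-1})\delta) = O(\rho\delta)$-neighborhood of the image of the map $\Phi_\sigma(x,y,\lambda) = (x - \lambda y)/(1-\lambda)$ restricted to $M \times \sigma \times [0, 1-c_0 \alpha_{r-1}]$.

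The crucial codimension observation is purely dimensional: the domain of $\Phi_\sigma$ has dimension $n + (r-1) + 1 = n + r$, and since $r \leq s-1 = m-n-1$, this is at most $m-1$, so $\mathrm{Image}(\Phi_\sigma)$ sits inside an $(m-1)$-rectifiable set of $\mathbb{R}^m$. Having cut off $\lambda$ away from the pole at $1$, the Lipschitz constant of $\Phi_\sigma$ on the truncated domain is controlled by $1/\alpha_{r-1}$, so the $(m-1)$-Hausdorff measure of $\mathrm{Image}(\Phi_\sigma) \cap B$ is bounded by a constant $C(n,m)(\rho_0\delta)^{m-1}$. Hence $|B_\sigma| \leq C'(n,m) \cdot \rho \delta \cdot (\rho_0\delta)^{m-1}$, and summing over at most $N$ faces gives $|\mathcal{B}| \leq C''(n,m) N \rho_1 (\rho_0\delta)^m / 4$, using $\rho = \rho_0\rho_1/4$. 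By the choice of $\rho_1$ in \eqref{rho1small}, this is strictly less than $|B|$, so a valid $p_i^* \in B \setminus \mathcal{B}$ exists, and Lemma \ref{aII,16a}, together with the fullness bound $2\Theta_0$ enjoyed by $L$, ensures that $L_i^*$ is indeed a simplicial complex.

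The main obstacle is the measure estimate for $\mathcal{B}$, which hinges entirely on the codimension inequality $n + r \leq m-1$ — without it, the $O(\rho\delta)$-neighborhood of $\mathrm{Image}(\Phi_\sigma)$ could cover all of $B$ and the argument would collapse. The secondary technical point is the uniform control of the Lipschitz constant of $\Phi_\sigma$ away from the excluded region $\lambda \to 1$, for which the inductive hypothesis $\dist(M,\sigma) > \alpha_{r-1}\delta$ is tailor-made; this is why the constants $\alpha_r$ were chosen as a geometric sequence in $r$ with ratio $\rho$ at the outset of the section.
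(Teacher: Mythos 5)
Your overall skeleton (perturb $p_i$ inside $U_{\rho_0\delta}(p_i)$, show the ``bad'' positions have small measure, and note that only the new join simplices $[q,\sigma^{r-1}]$ with $n+r\leq m-1$ need checking) is the right one, and the codimension count is exactly the relevant fact. But there is a genuine gap at the crucial measure estimate. From the Lipschitz bound on $\Phi_\sigma$ (with constant of order $1/\alpha_{r-1}$, or worse in the $\lambda$--direction, where the derivative is $(x-y)/(1-\lambda)^2$) you can only bound the total $(m-1)$-Hausdorff measure of the image in terms of the measure of the domain, which gives something like $\delta^{n+r}/\alpha_{r-1}^{n+r+1}$; it does \emph{not} give the claimed bound $\mathcal H^{m-1}\bigl(\mathrm{Image}(\Phi_\sigma)\cap B\bigr)\leq C(n,m)(\rho_0\delta)^{m-1}$. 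Bounding the measure of a rectifiable set \emph{inside a small ball} by $(\text{radius})^{m-1}$ requires some Ahlfors-regularity or graph structure, not merely a Lipschitz parametrization (the image could fold many times through $B$). Worse, the constants you call $C'(n,m)$, $C''(n,m)$ in fact depend on $\alpha_{r-1}=\rho^{r-1}\rho_0\rho_1/2$, i.e.\ on $\rho_0$ and $\rho_1$ themselves, so the final comparison ``$C''N\rho_1(\rho_0\delta)^m/4<|B|$ by the choice of $\rho_1$'' does not close: shrinking $\rho_1$ shrinks $\alpha_{r-1}$ and inflates $C''$. A smaller point: the smallness property of $\rho_1$ that makes such volume comparisons work is the slab estimate of Lemma \ref{rho1}, not the upper bound \eqref{rho1small}, which only caps $\rho_1$ relative to $\rho_0$ to keep $\rho<1/2$.

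The paper's proof avoids exactly this difficulty by never measuring a ``bad set'' defined directly in terms of the curved manifold $M$. After splitting into the trivial case $\dist(M,p_i)\geq 3\delta$ and the case where some $p\in M$ has $|p-p_i|<3\delta$, it linearizes: for each face $\tau_j'$ it forms the affine plane $P_j$ spanned by $\tau_j'$ and the tangent plane $P_0=P_p$, of dimension at most $(s-2)+n+1=m-1$, and the bad set is the union of at most $N$ honest slabs $Q_j=U_{\rho_0\delta}(p_i)\cap U_{\rho_1\rho_0\delta}(P_j)$, whose volumes Lemma \ref{rho1} controls immediately. Having placed $p_i^*$ at distance $>\rho_1\rho_0\delta$ from every $P_j$, the distance from the join simplex $\tau_j=p_i^*\tau_j'$ to the plane $P_0$ is bounded below by the join-distance Lemma \ref{aII,14b} together with the claim $\dist(\tau_j',P_0)>2\alpha_{r-1}\delta/3$, and this is converted into a lower bound on $\dist(M,\tau_j)$ using \eqref{8.6} (the nearby piece $M_{p,\xi}$ lies in a $\lambda\xi$-neighborhood of $P_{p,\xi}$) and \eqref{8.3} (the rest of $M$ is at distance $\geq\xi$). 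These quantitative flatness inputs from Lemmas \ref{8a} and \ref{8b}, tied to the choice $\delta=\xi/8$, are precisely what your direct approach would need in order to justify the density bound for $\mathrm{Image}(\Phi_\sigma)$ in $B$ --- and once you invoke them, you are essentially reconstructing the paper's reduction to slabs around the planes $P_j$. Your closing appeal to Lemma \ref{aII,16a} and the fullness of $L$ to ensure $L_i^*$ is a genuine triangulation does match the paper.
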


\begin{proof}
\noindent {\bf Case 1.} If $\dist(M, p_i) \geq 3\delta$, then we set $p_i^* = p_i$.
By \eqref{18.2} we get that $\dist(M, \tau) > \delta \geq \alpha_r \delta$ and \eqref{18.3} holds for $L_i^*$.

\noindent {\bf Case 2.} If there is a point $p \in M$ with $|p - p_i| < 3\delta$, let $P_0 =
P_p$ be the tangent plane of $M$ at $p$. Let $\tau'_1, \dots, \tau'_\nu$ be the
simplices of $L_{i-1}^*$ of dimension at most $s-2$ so that $\tau_j =
p_i^*\tau_j'$ will be a simplex of $L_i^*$. Since the star of any vertex of $L$
or $L^*_{i}$ cannot have more than $N$ simplices, we have that $\nu < N$. For
$j \geq 1$, let $P_j$ be the affine plane spanned by $\tau_j'$ and $P_0$. Its
dimension is at most $(s - 2) + n + 1 = m -1 < m$. Let
\begin{equation}\label{18.5}
  Q_j = U_{\rho_0 \delta}(p_i) \cap U_{\rho_1 \rho_0 \delta}(P_j), \text{ for } j = 0, \dots, \nu.
\end{equation}
Note that each $Q_j$ is (possibly strictly) contained in the part of the ball $U_{\rho_0\delta}(p_i)$ between a pair of parallel $(m-1)$-planes each at distance $\rho_1 \rho_0 \delta$ from $P_j$, and at distance $2 \rho_1 \rho_0 \delta$ from each other.
Thus, by the definition of $\rho_1$, we have that
\begin{equation*}
  \vol(Q_j) < \frac{\vol(U_{\rho_0 \delta}(p_i))}{N}.
\end{equation*}
The total volume of the sets $Q_0, \dots, Q_\nu$ is less than the volume of 
the ball $U_{\rho_0 \delta}(p_i)$; thus we can find a point $p_i^*$ so that 
both \eqref{18.1} holds and 
\begin{equation}\label{18.6}
  \dist(p_i^*, P_j) > \rho_1 \rho_0 \delta \quad (j = 0,..., \nu).
\end{equation}

We now need the following Claim, whose proof can be found in \cite{Whitney} pg. 130

{\bf Claim:} 
\begin{equation}\label{18.7}
  \dist(\tau'_j, P_0) > 2 \alpha_{r-1} \delta/3 \text{ if $\dim(\tau_j') = r - 1$ and $r < s$}.
\end{equation}



\emph{Proof Continued:}  Using Lemma \ref{aII,14b} and equations \eqref{18.6} and \eqref{17.2}, we have that
\begin{align}
  \dist(\tau_j, P_p) & \geq \frac{\dist(\tau_j', P_p) \cdot \dist(p_i^*, P_ j)}{\diam(\tau_j)}\\
                     &> \left( \frac{2\alpha_{r-1} \delta}{3} \right) \left( \frac{\rho_1 \rho_0 \delta}{2\delta} \right)\\
                     &= \frac{4\alpha_{r-1}\rho\delta}{3} = \frac{4\alpha_r\delta}{3}.
\end{align}

Using the above and \eqref{8.6} gives
\begin{equation*}
  \dist(M_{p,\xi}, \tau_j) \geq \dist(\t_j, P_0) - \lambda \xi > \frac{4\alpha_r\delta}{3} - \frac{\alpha_r\delta}{3} = \alpha_r\delta.
\end{equation*}
Applying \eqref{8.3}, \eqref{18.1}, and \eqref{18.2}, we get
\begin{align*}
  \dist(M \setminus M_{p,\xi}, \tau^r) &\geq \dist(M \setminus M_{p,\xi}, p) - |p - p_i| - |p_i - p_i^*| - \diam(\tau^r)\\
                                       &>\xi - 3\delta - \rho_0 \delta - 2\delta\\
                                       &>\delta.
\end{align*}
Thus, since $\dist(\t^r, M) = \text{min} \left\{ \dist(\t^r, M_{p, \xi}), \dist(\t^r, M \setminus M_{p, \xi}) \right\}$, we have verified \eqref{18.3} for $\tau^r = \tau_j$, $j \geq 1$.
Using $j = 0$ in \eqref{18.6} and \eqref{8.6} then gives
\begin{align*}
  \dist(p_i^*, M_{p,\xi}) &\geq \dist(p_i^*, P_p) - \lambda \xi\\
                          &> \rho_1\rho_0\delta - \frac{\alpha_0\delta}{3}\\
                          &> \alpha_0\delta.
\end{align*}
Again by \eqref{8.3} and \eqref{18.1}, we get
\begin{align*}
  \dist(M \setminus M_{p,\xi}, p_i^*) &\geq \dist(M \setminus M_{p,\xi}, p) - |p - p_i| - |p_i - p_i^*|\\
                                       &>\xi - 3\delta - \rho_0 \delta\\
                                       &>2\delta > \alpha_0\delta.
\end{align*}
So \eqref{18.3} holds for $\tau^r = p_i^*$ and thus \eqref{18.3} holds in all cases.
In particular, if $L^{*(s-1)}$ denotes the $(s-1)$-skeleton of $L^*$, then
\begin{equation}\label{18.4}
  \dist(M, L^{*(s-1)}) > \alpha_{s-1} \delta = 4\alpha\delta = 2a.
\end{equation}
\end{proof}

\subsection{The intersections of $M$ with $L^*$}\label{intersections}
The following six claims all deal with how $M$ and its tangent planes intersect the simplices of $L^*$.  
In the rest of our arguments, we only use claims \ref{4.5.5} and \ref{4.5.6}. Nevertheless, we include claims \ref{4.5.1} through \ref{4.5.4} in order to motivate the last two claims.
These first four claims essentially state that $M$ intersects a face $\tau$ of $L^*$ near a point $p$ if and only if the tangent plane at $p$ intersects $\tau$ -- and in this case, the tangent plane at $p$ intersects $\tau$ transversely.  
Claims \ref{4.5.5} and \ref{4.5.6} deal specifically with how $M$ intersects the $s=m-n$ skeleton of $T$.
The proofs of all six claims can be found in \cite{Whitney} pages 130-131.

\begin{claim}\label{4.5.1}
  For any $p \in M$ and $r$-simplex $\tau^r$ of $L^*$, we have that
  \begin{equation}\label{19.1}
    \dist(P_p, \tau^r) > a \quad \text{ if $\tau^r \subset U_{7\delta}(p)$ \text{ and } $r \leq s-1$}.
  \end{equation}
\end{claim}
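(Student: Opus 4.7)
The plan is to argue by contradiction using two facts already established: equation \eqref{18.4}, which separates the $(s-1)$-skeleton of $L^*$ from $M$ itself by more than $2a$, and Lemma \ref{8b}, which guarantees that on the scale of $\xi$ the manifold $M$ is a $\lambda\xi$-perturbation of its tangent plane $P_p$. In broad strokes, if $P_p$ were too close to $\tau^r$, then so too would be $M$ (since $\tau^r$ sits well within a $P_p$-graphical region of $M$), which would contradict \eqref{18.4}.

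Suppose for contradiction that $\dist(P_p, \tau^r) \leq a$, and let $q \in \tau^r$ realize this distance, so that $|q - q'| \leq a = 2\alpha\delta$, where $q' := \pi_p(q) \in P_p$. Since $\tau^r \subset U_{7\delta}(p)$ and $\alpha < 1/4$, I compute
$$|q' - p| \leq |q' - q| + |q - p| < 2\alpha\delta + 7\delta < 8\delta = \xi,$$
so $q'$ lies in $P_{p,\xi}$. Applying Lemma \ref{8b}, the lift $q'' := (\pi_p|_M)^{-1}(q') \in M_{p,\xi} \subset M$ satisfies $|q'' - q'| \leq \lambda\xi$. The triangle inequality then gives
$$|q - q''| \leq |q - q'| + |q' - q''| \leq 2\alpha\delta + \lambda\xi = 2\alpha\delta + 8\lambda\delta.$$

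The definition \eqref{17.4} of $\lambda$ together with the smallness of $\gamma$ forces $8\lambda\delta < 2\alpha\delta$, so $|q - q''| < 4\alpha\delta = 2a$. Since $q'' \in M$ and $q \in \tau^r \subset L^{*(s-1)}$ (using $r \leq s-1$), this contradicts \eqref{18.4}, which requires $|q - q''| > 2a$. The only real obstacle is the bookkeeping verification that the constants cooperate: specifically that $\xi = 8\delta$ is large enough for $q'$ to lie in $P_{p,\xi}$ (where Lemma \ref{8b} applies) and that $\lambda$ is small enough relative to $\alpha$ that the tangent-plane approximation error $\lambda\xi$ is absorbed into the gap provided by \eqref{18.4}. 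Both are immediate consequences of the chain of constant choices made in \eqref{17.2}--\eqref{17.5}.
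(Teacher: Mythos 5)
Your argument is correct and is essentially the proof the paper defers to Whitney (pp.\ 130--131): assume $\dist(P_p,\tau^r)\leq a$, use $\tau^r\subset U_{7\delta}(p)$ and $\alpha<1/4$ to place the projected point in $P_{p,\xi}$ (since $\xi=8\delta$), pass to a nearby point of $M_{p,\xi}$ within $\lambda\xi$ via \eqref{8.6}, and contradict \eqref{18.4} because $2\alpha\delta+8\lambda\delta<4\alpha\delta=2a$ by the choice of $\lambda$ in \eqref{17.4}. The only cosmetic remark is that you do not need the lift $(\pi_p|_M)^{-1}(q')$ to exist -- \eqref{8.6} directly supplies some point of $M_{p,\xi}$ within $\lambda\xi$ of $q'$, which is all the triangle inequality requires.
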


\begin{claim}\label{4.5.2}
  If $M$ intersects $\tau^r$, $p \in M$, and $\tau^r \subset
  U_{7\delta}(p)$, then $P_p$ intersects $\tau^r$. 
\end{claim}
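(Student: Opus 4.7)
The plan is to argue by cases on the dimension $r$ versus the codimension $s = m-n$. First, the case $r \leq s-1$ is vacuous: by \eqref{18.4}, $\dist(M, L^{*(s-1)}) > 2a > 0$, so $M$ cannot meet any such $\tau^r$, contradicting the hypothesis. So I may assume $r \geq s$ and fix $q \in M \cap \tau^r$.

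Next I reduce the claim to a projection statement onto the normal plane. Since $\tau^r \subset U_{7\delta}(p)$ and $\xi = 8\delta$ by \eqref{17.5}, the point $q$ lies in $M_{p,\xi}$, and Lemma \ref{8a} gives the vertical bound $|q - \pi_p(q)| \leq \lambda\xi$. Let $\pi_{P_p^*} \colon \R^m \to P_p^*$ denote orthogonal projection onto the affine normal plane at $p$; then $x \in P_p$ if and only if $\pi_{P_p^*}(x) = p$. Hence it suffices to show $p \in \pi_{P_p^*}(\tau^r)$, and we already have $|\pi_{P_p^*}(q) - p| \leq \lambda\xi$.

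The core geometric input is quantitative transversality of $P(\tau^r)$ with $P_p$. By Claim \ref{4.5.1} applied to the $(s-1)$-faces of $\tau^r$, together with the fullness bound \eqref{18.2b} and the diameter bound \eqref{18.2}, the independence $\ind(P_p, P(\tau^r))$ admits a lower bound depending only on $n$ and $m$ (via the nested constants in \eqref{17.2}--\eqref{17.3}). Since $r \geq s$, this forces the linear part of $\pi_{P_p^*}$ restricted to $V(P(\tau^r))$ to be surjective with uniformly bounded singular values. A direct calculation then shows that $\pi_{P_p^*}(\tau^r)$ contains a ball in $P_p^*$ centered at $\pi_{P_p^*}(q)$ of radius comparable to $b = \beta\delta$; since $\lambda\xi \ll b$ by the choice of $\lambda$ in \eqref{17.4}, this ball contains $p$, producing the desired point of $P_p \cap \tau^r$.

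The main obstacle I expect to address carefully is that $q$ could lie close to $\partial \tau^r$ in directions horizontal to $P_p$, in which case the image ball above might be truncated. This is precisely what the scale hierarchy \eqref{17.4}--\eqref{17.6} was arranged to prevent: Claim \ref{4.5.1} forces every $(s-1)$-face of $\tau^r$ to stay at distance $> a = 2\alpha\delta$ from $P_p$, while $q$ sits within $\lambda\xi \ll a$ of $P_p$. Hence $q$ is deep in the interior of $\tau^r$ relative to its low-dimensional boundary structure, providing the elbow room needed to ensure the image ball in $P_p^*$ is not truncated and completing the argument.
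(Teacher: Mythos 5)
Your opening reduction is fine: the case $r\leq s-1$ is indeed vacuous by \eqref{18.4}, the membership $q\in M_{p,\xi}$ follows from \eqref{8.3} since $7\delta<\xi=8\delta$, and the bound $\dist(q,P_p)<\lambda\xi$ is \eqref{8.6} (note this is Lemma \ref{8b}, not Lemma \ref{8a}). The reformulation ``it suffices to show $p\in\pi_{P_p^*}(\tau^r)$'' is also correct. The gap is in your ``core geometric input.'' For $r>s$ the quantity $\ind(P_p,P(\tau^r))$ is identically zero: $\dim V(P(\tau^r))+\dim V(P_p)=r+n>m$, so the two direction spaces share a nonzero vector, and (as noted in Section \ref{section:background}) the independence then vanishes. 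So no lower bound on it can be extracted from Claim \ref{4.5.1}, \eqref{18.2b} and \eqref{18.2}, and the asserted consequence -- that $\pi_{P_p^*}$ restricted to $V(P(\tau^r))$ is surjective onto the normal directions with uniformly bounded singular values -- is exactly the quantitative transversality that needs proof; it does not follow from an ``independence'' that is zero. (Even in the exact-codimension case $r=s$, where the independence bound is true, it is Claim \ref{4.5.3}, and its proof uses the presence of the point $q\in\tau^s$ within $\lambda\xi$ of $P_p$, not fullness alone.) A second problem is your truncation argument: Claim \ref{4.5.1} only keeps the faces of dimension $\leq s-1$ away from $P_p$, but the ball you want inside $\pi_{P_p^*}(\tau^r)$ around $\pi_{P_p^*}(q)$ can be truncated by faces of dimension $\geq s$ -- indeed $q$ may lie on an $s$-face of $\tau^r$ -- and nothing in your argument controls those. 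Also, the radius scale $b=\beta\delta$ belongs to the complex $K$, not to $L^*$; the relevant scale here is $a=2\alpha\delta$.

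The argument the paper defers to (Whitney, pp.~130--131) avoids both issues by a descent over faces rather than a transversality statement for $P(\tau^r)$ itself. Since $\dist(P_p,\tau^r)\leq\dist(P_p,q)<\lambda\xi<a$, one applies Whitney's convexity lemma (App.~II, 14a: if $\dist(P,\tau)<\dist(P,\partial\tau)$ and $\dim\tau+n\geq m$, then $\dim\tau+n=m$ and $P$ meets $\tau$ in one point, with $\ind(P,P(\tau))>\dist(P,\partial\tau)/\diam\tau$): if $P_p$ missed $\tau^r$, the small distance must already be achieved on a proper face, and iterating pushes a point of distance $<a$ down through faces; Claim \ref{4.5.1} forbids reaching dimension $\leq s-1$, and at dimension $s$ the same lemma forces an actual intersection (this simultaneously yields Claim \ref{4.5.3}). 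If you want to salvage your projection picture, you must first reduce to an $s$-face whose plane is quantitatively independent of $P_p$ -- but be careful not to quote Claim \ref{4.5.6} for that, since it is proved after (and using) the present claim.
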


\begin{claim}\label{4.5.3}
  If $r = s$ in Claim \ref{4.5.2} and if $P(\tau^s)$ is the affine plane
  spanned by $\tau^s$, then 
  \begin{equation}\label{19.2}
    \ind(P_p, P(\tau^s)) > \frac{a}{2\delta} = \alpha. 
  \end{equation}
\end{claim}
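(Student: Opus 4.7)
The plan is to extract the independence bound directly from a single chord of $\tau^s$: pick an intersection point $x \in P_p \cap \tau^s$ supplied by Claim \ref{4.5.2}, and, for each unit direction $v$ in $V(P(\tau^s))$, run a chord from $x$ in direction $v$ out to a codimension-one face. Claim \ref{4.5.1} forces that face to sit at distance $> a$ from $P_p$, while the chord length is bounded by $\diam(\tau^s) < 2\delta$, and the ratio is exactly $\alpha$.

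More concretely, I would first observe that $x$ must lie in the relative interior of $\tau^s$ inside $P(\tau^s)$: every proper face $\tau^r$ of $\tau^s$ has $r \leq s-1$ and $\tau^r \subset \tau^s \subset U_{7\delta}(p)$, so Claim \ref{4.5.1} yields $\dist(P_p, \tau^r) > a > 0$, forbidding $x \in \tau^r$. Consequently, for any unit vector $v \in V(P(\tau^s))$, the ray $\{x + tv : t \geq 0\}$ begins in the relative interior of the compact convex set $\tau^s \subset P(\tau^s)$ and exits through some $(s-1)$-face $\tau^{s-1}$ at a point $y = x + t_0 v$ with $0 < t_0 \leq \diam(\tau^s) < 2\delta$ by \eqref{18.2}.

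Applying Claim \ref{4.5.1} to this $\tau^{s-1}$ gives $\dist(y, P_p) > a$. Since $x \in P_p$ we have $\pi_{P_p}(x) = x$, so a direct computation yields
\begin{equation*}
  y - \pi_{P_p}(y) = t_0 \bigl( v - \pi_{P_p}(v) \bigr),
\end{equation*}
and hence $|v - \pi_{P_p}(v)| = |y - \pi_{P_p}(y)|/t_0 > a/(2\delta) = \alpha$. Taking the infimum over $v$ on the compact unit sphere in $V(P(\tau^s))$ preserves the strict inequality and gives $\ind(P_p, P(\tau^s)) > \alpha$, as required.

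The one point requiring care is the claim that $x$ lies in the relative interior of $\tau^s$; if $x$ instead sat on a proper face, we could not freely push in every direction $v$ to reach a codimension-one face along a chord inside $\tau^s$. Fortunately this is exactly what Claim \ref{4.5.1} prevents. Everything else is short bookkeeping using the definition of $\ind$ and the diameter bound \eqref{18.2}.
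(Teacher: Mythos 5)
Your proof is correct, and it is essentially the argument the paper relies on: the paper defers this claim to Whitney, whose proof runs through the plane-independence lemma (the bound $\ind(P,P(\tau)) > \dist(P,\partial\tau)/\diam(\tau)$), which is exactly your chord computation $\,|v-\pi_{P_p}(v)| = \dist(y,P_p)/t_0 > a/(2\delta) = \alpha\,$ combined with Claim \ref{4.5.1} applied to the proper faces and the diameter bound \eqref{18.2}. Your version simply inlines that lemma, including the correct observation that the intersection point must lie in the relative interior of $\tau^s$, so no gap.
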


\begin{claim}\label{4.5.4}
  If $p \in M$, $\tau^r \subset U_{7\delta}(p)$, and $P_p$ intersects $\tau^r$, then $r \geq s$ and $M_{p,\xi}$ intersects $\tau^r$.
\end{claim}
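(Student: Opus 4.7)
The first conclusion $r \geq s$ is immediate from Claim \ref{4.5.1}: if $r \leq s-1$, then the hypothesis $\tau^r \subset U_{7\delta}(p)$ together with \eqref{19.1} would give $\dist(P_p, \tau^r) > a > 0$, contradicting $P_p \cap \tau^r \neq \emptyset$. So I focus on the nontrivial conclusion $M_{p,\xi} \cap \tau^r \neq \emptyset$.

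My plan is a two-step perturbation argument: first produce a point $q \in P_p \cap \tau^r$ sitting a definite distance from $\partial \tau^r$, then use the $C^1$-closeness of $M_{p,\xi}$ to $P_{p,\xi}$ together with the quantitative transversality of $P_p$ and $P(\tau^r)$ to perturb $q$ to a point of $M_{p,\xi}$ that remains inside $\tau^r$. The base case $r = s$ is clean: every proper face of $\tau^s$ has dimension at most $s-1$, so Claim \ref{4.5.1} prevents $P_p$ from touching $\partial\tau^s$, and any $q \in P_p \cap \tau^s$ automatically satisfies $\dist(q, \partial \tau^s) > a$. The case $r > s$ I would treat by induction on $r$: either $P_p$ meets the relative interior of $\tau^r$ (and I apply the perturbation step below), or $P_p \cap \tau^r$ lies entirely inside some boundary face $\tau^{r-1}$ of dimension $r-1 \geq s$, in which case the inductive hypothesis gives $M_{p,\xi} \cap \tau^{r-1} \neq \emptyset$ and we are done.

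For the perturbation step, given such a $q$, the containment $\tau^r \subset U_{7\delta}(p)$ together with $\delta = \xi/8$ places $q$ inside $P_{p,\xi}$, so I may set $q^* := \pi_p^{-1}(q) \in M_{p,\xi}$. By Lemma \ref{8a} and \eqref{8.6} we have $|q - q^*| \leq \lambda \xi$ with $q^* - q \perp P_p$. The transversality bound $\ind(P_p, P(\tau^r)) > \alpha$ (Claim \ref{4.5.3} when $r = s$, and a stronger version when $r > s$) implies that the orthogonal projection $\pi_{P(\tau^r)}$ restricts to a quantitative near-isomorphism from $P_p$ onto $P(\tau^r)$. The $C^1$-closeness of $M_{p,\xi}$ to $P_p$ (guaranteed by our choice of $\delta_0$ through Theorem \ref{10A}) then implies the same for $\pi_{P(\tau^r)}|_{M_{p,\xi}}$ near $q^*$, and its image covers a definite neighborhood of $q$ in $P(\tau^r)$. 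A quantitative open-mapping step then produces $\tilde q \in M_{p,\xi} \cap P(\tau^r)$ within distance $O(\lambda\xi / \alpha)$ of $q$. The choice $\lambda \leq \alpha\gamma/128$ from \eqref{17.4} keeps this perturbation strictly less than $\dist(q, \partial\tau^r)$, so $\tilde q$ lies in $\tau^r$ itself, not just in $P(\tau^r)$.

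The main obstacle will be the quantitative open-mapping / implicit-function argument: I need to convert the linear-algebraic transversality $\ind(P_p, P(\tau^r)) > \alpha$ and the $C^1$-proximity of $M_{p,\xi}$ to $P_p$ into a definite geometric neighborhood of $q$ covered by $\pi_{P(\tau^r)}(M_{p,\xi})$, and then to verify that the resulting preimage $\tilde q$ lies inside the simplex $\tau^r$ rather than merely on the plane $P(\tau^r)$. The delicate bookkeeping is the comparison of the transverse thickness of $\tau^r$ near $q$ with the $O(\lambda\xi)$ perturbation, but the constants in \eqref{17.2}--\eqref{17.6} have been tuned so that $\lambda\xi \ll \alpha a$, and the estimate closes with room to spare.
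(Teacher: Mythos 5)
Your derivation of $r\geq s$ and your base case $r=s$ are essentially the intended argument (the paper itself does not reproduce a proof here but defers to \cite{Whitney}, pp.\ 130--131, where the persistence of the intersection is established exactly by playing the clearance $\dist(P_p,\partial\tau^s)>a$ from Claim \ref{4.5.1} against the $\lambda$-closeness of $M_{p,\xi}$ to $P_{p,\xi}$ in \eqref{8.4}--\eqref{8.6}). Two repairs are needed even in that case: Claim \ref{4.5.3} is stated under the hypothesis that $M$ meets $\tau^s$, which is what you are trying to prove, so you must instead derive $\ind(P_p,P(\tau^s))>\alpha$ directly from $\dist(P_p,\partial\tau^s)>a$ (Whitney's App.\ II, 14a, which the paper alludes to but does not restate); and the quantitative step should project \emph{along} $P(\tau^s)$ onto a complementary $n$-plane rather than \emph{onto} $P(\tau^s)$ --- showing that $\pi_{P(\tau^r)}(M_{p,\xi})$ covers a neighborhood of $q$ only produces a point of $M_{p,\xi}$ lying over $q$, not a point of $M_{p,\xi}$ on the plane $P(\tau^r)$.

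The genuine gap is your treatment of $r>s$. In the branch ``$P_p$ meets the relative interior of $\tau^r$'' you have no lower bound on $\dist(q,\partial\tau^r)$: Claim \ref{4.5.1} keeps $P_p$ a distance $a$ only from faces of dimension $\leq s-1$, so for $r>s$ the convex set $P_p\cap\tau^r$ can meet the interior while hugging a face of dimension $\geq s$ (e.g.\ $n=1$, $m=3$, $s=2$: a line crossing a tetrahedron just inside, nearly parallel to a $2$-face, yet staying $>a$ from every edge). Then the $O(\lambda\xi/\alpha)$ displacement is \emph{not} ``strictly less than $\dist(q,\partial\tau^r)$'', and the perturbed point may leave the simplex; the tuning in \eqref{17.2}--\eqref{17.4} does not help, since it compares $\lambda\xi/\alpha$ to $a$, not to this uncontrolled distance. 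Moreover there is no ``stronger version'' of Claim \ref{4.5.3} for $r>s$: since $n+r>m$ the planes $P_p$ and $P(\tau^r)$ share a direction, so $\ind(P_p,P(\tau^r))=0$ and the transversality you invoke is vacuous. The correct reduction (and the one behind Whitney's argument) is convex-geometric: take an extreme point $q$ of the nonempty compact convex polytope $P_p\cap\tau^r$ and let $F$ be the face of $\tau^r$ containing $q$ in its relative interior. Claim \ref{4.5.1} forces $\dim F\geq s$, while if $\dim F>s$ the affine space $P_p\cap P(F)$ has dimension $\geq \dim F-s\geq 1$, so a short segment through $q$ inside $F$ lies in $P_p\cap\tau^r$, contradicting extremality; hence $\dim F=s$. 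Thus $P_p$ meets an $s$-face of $\tau^r$ with the full clearance $>a$ from its boundary, and your $r=s$ argument applied to that face yields a point of $M_{p,\xi}$ in $F\subset\tau^r$, which is what the claim requires.
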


\begin{claim}\label{4.5.5}
  $M$ intersects any $\tau^s$ in at most one point.
\end{claim}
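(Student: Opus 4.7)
The plan is to argue by contradiction, exploiting the strong transversality between $M$ and $P(\tau^s)$ given by Claim \ref{4.5.3} against the near-flatness of $M$ on the scale of $\tau^s$ (a scale much smaller than $\xi$), which is controlled by our choice of $\lambda$.

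Suppose for contradiction that there exist distinct points $p_1, p_2 \in M \cap \tau^s$. Since $\diam(\tau^s) < 2\delta$ by \eqref{18.2}, we have $\tau^s \subset U_{2\delta}(p_1) \subset U_{7\delta}(p_1)$, so Claim \ref{4.5.3} applies at $p = p_1$ and yields
\begin{equation*}
\ind(P_{p_1}, P(\tau^s)) > \alpha.
\end{equation*}
The secant vector $v = p_2 - p_1$ lies in $V(P(\tau^s))$, because both endpoints lie on the affine plane $P(\tau^s)$. By the definition of independence, this forces
\begin{equation*}
|v - \pi_{P_{p_1}}(v)| \geq \alpha\, |v|.
\end{equation*}

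On the other hand, by Claim \ref{4.5.4} we have $p_1, p_2 \in M_{p_1,\xi}$, and $\xi$ was chosen via Lemma \ref{8a} precisely so that any secant vector of $M_{p_1,\xi}$ deviates from its projection onto $P_{p_1}$ by at most a $\lambda$-fraction of its length; in particular
\begin{equation*}
|v - \pi_{P_{p_1}}(v)| \leq \lambda\, |v|.
\end{equation*}
Combining the two estimates gives $\alpha |v| \leq \lambda |v|$, i.e.\ $\alpha \leq \lambda$. But our choice of $\lambda$ in \eqref{17.4} makes $\lambda \leq \alpha\gamma/128 < \alpha$ (since $\gamma < 1$ by \eqref{17.3} and the smallness of $\beta$, $\Theta_1$). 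This contradiction forces $p_1 = p_2$.

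The only step that requires care is the quantitative secant estimate in the second displayed inequality: I would need to verify that Lemma \ref{8a} provides precisely a bound of the form $|v - \pi_{P_{p_1}}(v)| \leq \lambda |v|$ for chords of $M_{p_1,\xi}$ (rather than, say, a bound involving $\lambda \xi$ as in \eqref{8.6}, which concerns distance of $M_{p,\xi}$ from the tangent plane rather than the direction of secants). Given Whitney's framework — where $\xi_0$ is chosen so that $M_{p,\xi}$ is a very flat graph over $P_p$ with derivative close to the identity — this Lipschitz-type secant estimate is the standard formulation and is the whole point of fixing such a small $\lambda$. Once that input is in hand, the rest is immediate from Claim \ref{4.5.3} and the choice of constants made in \eqref{17.4}.
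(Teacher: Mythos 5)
Your argument is correct and is essentially the proof the paper defers to Whitney for: two distinct intersection points would give a secant vector of $M_{p_1,\xi}$ lying in the plane $P(\tau^s)$, which is incompatible with the independence bound of Claim \ref{4.5.3} because $\lambda < \alpha$. The secant estimate you flag as needing verification is exactly Lemma \ref{8b}, whose inequality \eqref{8.4} is stated to hold for secant vectors of $M_{p,\xi_1}$ (so it applies since $\xi \leq \xi_1$), not Lemma \ref{8a}; and the membership $p_2 \in M_{p_1,\xi}$ follows from \eqref{8.3} of Lemma \ref{8a} together with $|p_2 - p_1| < 2\delta < \xi$, rather than from Claim \ref{4.5.4}.
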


\begin{claim}\label{4.5.6}
  If $M$ intersects $\tau^r = \langle q_0,\cdots, q_r \rangle$, then for each $k$, $M$
  intersects some $s$-face of $\tau^r$ containing $q_k$.
\end{claim}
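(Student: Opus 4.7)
The plan is to argue by induction on $r$, using the convex polytopal structure of $P_p\cap\tau^r$ together with a facet count.

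In the base case $r=s$, the simplex $\tau^r$ is itself an $s$-face containing every vertex of $\tau^r$, so the hypothesis $M\cap\tau^r\ne\emptyset$ immediately yields the conclusion.

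For the inductive step, with $r>s$, I would first pick $p\in M\cap\tau^r$. Since $\diam\tau^r<2\delta$, we have $\tau^r\subset U_{7\delta}(p)$, so Claim \ref{4.5.2} gives $p\in P_p\cap\tau^r$. I will invoke the transversality encoded by Claim \ref{4.5.3} together with the genericity of the vertex choices in the construction of $L^*$ to conclude that $P_p$ meets $\tau^r$ (and each face of dimension at least $s$) transversely; consequently $P_p\cap\tau^r$ is a convex polytope of dimension $r-s\geq 1$, whose facets are precisely the nonempty intersections $P_p\cap\tau^{r-1}_j$, each of dimension $r-s-1$ and coming from a distinct $(r-1)$-face $\tau^{r-1}_j$ of $\tau^r$. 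A convex polytope of dimension $d\geq 1$ has at least $d+1$ facets, so at least $r-s+1$ indices $j\in\{0,\dots,r\}$ satisfy $P_p\cap\tau^{r-1}_j\ne\emptyset$. Since $\tau^{r-1}_k$ is the \emph{unique} $(r-1)$-face of $\tau^r$ that does not contain $q_k$, at least $r-s\geq 1$ of those indices have $j\ne k$, so I can fix such a $j$ with $q_k\in\tau^{r-1}_j$ and $P_p\cap\tau^{r-1}_j\ne\emptyset$. Claim \ref{4.5.4} then yields $M\cap\tau^{r-1}_j\ne\emptyset$, and applying the inductive hypothesis to the $(r-1)$-simplex $\tau^{r-1}_j$ and the vertex $q_k$ produces an $s$-face of $\tau^{r-1}_j$ containing $q_k$ that meets $M$; this face is also an $s$-face of $\tau^r$ containing $q_k$, completing the induction.

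The hard part will be rigorously justifying that $P_p\cap\tau^r$ has the generic dimension $r-s$ with a facet inside each of the expected $(r-1)$-faces of $\tau^r$, i.e., that the transversality of $P_p$ with $s$-faces established in Claim \ref{4.5.3} propagates to all higher-dimensional faces. This should follow because every $(r-1)$-face $\tau^{r-1}_j$ contains some $s$-subface $\tau^s$ with $V(\tau^s)\subset V(\tau^{r-1}_j)$, and transversality of $V(P_p)$ with the smaller subspace forces transversality with the larger one; the remaining care is in checking that the quantitative angle and distance bounds $\alpha$ and $a$ from Section \ref{section:whitneyarg} are strong enough to enforce the correct dimensional behavior on every face simultaneously.
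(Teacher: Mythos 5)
Your induction skeleton (base case $r=s$; find an $(r-1)$-face containing $q_k$ that $P_p$ meets; pass from $P_p$ to $M$ via Claim \ref{4.5.4}; induct) is the right one and matches the argument the paper cites from Whitney. But the step you yourself flag as the hard part is a genuine gap, and the mechanism you propose does not close it. Transversality of the affine planes -- even granted for every face -- only controls dimensions of intersections of planes; it says nothing about where the bounded simplex sits relative to $P_p$. The dangerous configuration is precisely when $K:=P_p\cap\tau^r$ is contained in $\partial\tau^r$, e.g.\ entirely inside the facet $\tau^{r-1}_k$ opposite $q_k$ ($P_p$ ``grazing'' the simplex along the opposite face, which is also how $M\cap\tau^r$ could sit). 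In that case $K$ need not have dimension $r-s$, its facets need not lie in distinct $(r-1)$-faces, and your count produces no face containing $q_k$; yet this is exactly the situation the claim must handle, since the conclusion concerns \emph{every} vertex $q_k$, not just those near $M\cap\tau^r$. In addition, your appeal to Claim \ref{4.5.3} is not licensed as written: that claim is the case $r=s$ of Claim \ref{4.5.2}, so it applies only to $s$-faces that $M$ intersects; ``every $(r-1)$-face contains some $s$-subface'' does not give transversality with that face unless you already know $M$ meets such a subface -- which is part of what is being proved. (Also, the assertion that the nonempty sets $P_p\cap\tau^{r-1}_j$ are \emph{precisely} the facets of $K$, each of dimension $r-s-1$, is stronger than what is true or needed; a nonempty intersection with a closed facet can be a lower-dimensional face of $K$.)

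The missing ingredient is the intersection point itself. Since $p\in M\cap\tau^r$ and $p\in P_p$, we have $p\in K$, and by \eqref{18.4} (or Claim \ref{4.5.1}) $p$ avoids the $(s-1)$-skeleton, so the smallest face $F$ of $\tau^r$ containing $p$ has $\dim F\geq s$. If $q_k\in F$, the inductive hypothesis applied inside $F$ (which $M$ meets at $p$) already finishes. If $q_k\notin F$, apply the inductive hypothesis inside $F$ merely to produce an $s$-face of $F$ met by $M$; now Claim \ref{4.5.3} does apply and yields $\ind(P_p,P(\tau^s))>\alpha$, hence $V(P_p)+V(P(F))=\R^m$ and $V(P_p)+V(P(\tau^r))=\R^m$, so $W:=V(P_p)\cap V(P(\tau^r))$ has dimension exactly $r-s$. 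The barycentric functionals $\mu_j$ vanishing at $p$ (those with $q_j\notin F$, among them $\mu_k$) have common kernel $V(P(F))$ in $V(P(\tau^r))$, and their restrictions to $W$ have common kernel $W\cap V(P(F))$ of codimension $r-\dim F$; hence they are linearly independent on $W$, and one can choose $v\in W$ with $d\mu_j(v)>0$ for all of them. Following $p+tv$ inside $P_p\cap P(\tau^r)$, one stays in $\tau^r$ for small $t>0$ and exits through a boundary point $y$ with $\mu_k(y)>0$, i.e.\ $y$ lies in an $(r-1)$-face containing $q_k$ and avoiding the $(s-1)$-skeleton. From there your use of Claim \ref{4.5.4} and the inductive hypothesis concludes exactly as you wrote. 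So the skeleton of your proof is fine, but the full-dimensionality/facet-count step must be replaced by (or supplemented with) this argument at $p$, which is where the hypotheses $p\in M\cap\tau^r$, the distance bound to the low-dimensional skeleton, and Claim \ref{4.5.3} actually enter.
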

\subsection{The complex $K$}
For every simplex $\tau$ of $L^*$ that intersects $M$, we will choose a point
$\psi(\tau)$ inside $\tau$, and create a simplicial complex which is the order
complex of the poset of faces of simplices of $L^*$ that intersect $M$. (Note
that this produces a complex of dimension $n$).

If a simplex $\tau^s$ intersects $M$, it does so at a single point; let
$\psi(\tau^s)$ be that point. If $\tau^r$ intersects $M$, for $r > s$, let
$\tau^s_1,\dots,\tau^s_k$ be the $s$-faces of $\tau^r$ that intersect $M$
(which exist by the previous subsection).
Then let
\begin{equation}\label{20.2}
  \psi(\tau^r) = \frac{1}{k}\sum_0^k \psi(\tau_i^s).
\end{equation}

For $\tau^s = \langle q_0,\dots, q_s \rangle$ intersecting $M$, we have that
\begin{equation}\label{20.3}
  \mu_k > 2\alpha 
\end{equation}
where $0 \leq k \leq s$ and $\psi(\tau^s) = \sum \mu_i q_i$.
To see this, let $\tau_k$ be the $(s-1)$-face opposite $q_k$. 
Let $A_k$ and $A_k'$ be the altitudes from $q_k$ and $\psi(\tau^s)$ respectively to $P(\tau_k)$.
Since $\psi(\tau^s) \in M$, \eqref{18.2} and \eqref{18.3} give that
\begin{equation*}
  \mu_k = \frac{A_k'}{A_k} > \frac{\alpha_{s-1} \delta}{2 \delta} = \frac{4 \alpha \delta}{2 \delta} = 2\alpha.
\end{equation*}

\begin{figure}
\centering
\begin{tikzpicture}[scale=1.5]
  \coordinate (a1) at (-3,1);
  \coordinate (a2) at (3,1);
  \coordinate (a3) at (0,-1.5);
  \coordinate (a12) at ($(a1)!0.75!(a2)$);
  \coordinate (b1) at ($(a1)!0.3!(a3)$);
  \coordinate (b2) at ($(a2)!0.3!(a3)$);
  \coordinate (b12) at ($(b1)!0.5!(b2)$);
  \coordinate (labelsigma0) at ($(a1)!0.7!(a3)$);
  \coordinate (labelsigma1) at ($(a2)!0.7!(a3)$);
  \coordinate (labelsigma2) at ($(b12)!0.5!(a3)$);
  \coordinate (labelK) at ($(b1)!0.25!(b2)$);
  \draw [dashed] (a1) -- (a2) -- (a3) -- cycle;
  \draw [fill] (a1) circle [radius=0.03];
  \draw [fill] (a2) circle [radius=0.03];
  \draw [fill] (a3) circle [radius=0.03];
  \node [below left] at (labelsigma0) {$\sigma^1_0$};
  \node [below right] at (labelsigma1) {$\sigma^1_1$};
  \node  at (labelsigma2) {$\sigma^2$};
  \node [above] at (a12) {$L^*$};

  \node [fill=white, above] at (b1) {$\psi(\sigma^1_0)$};
  \node [fill=white, above] at (b2) {$\psi(\sigma^1_1)$};
  \node [below] at (b12) {$\psi(\sigma^2)$};
  \node [below] at (labelK) {$K$};
  \draw [thick](b1) -- (b2);
  \draw [fill] (b1) circle [radius=0.03];
  \draw [fill] (b2) circle [radius=0.03];
  \draw [fill] (b12) circle [radius=0.03];
  \draw [thin] (b1) to [out=20, in=160] (b2);
  \draw [thin] (b1) to [out=200, in=40] ([xshift=-0.5cm,yshift=-0.3cm]b1);
  \draw [thin] (b2) to [out=-20, in=140] ([xshift=0.5cm,yshift=-0.3cm]b2);
  \node [right] at ([xshift=0.5cm,yshift=-0.3cm]b2) {$M$};

\end{tikzpicture}
\caption{The complex $K$}\label{complexK}
\end{figure}
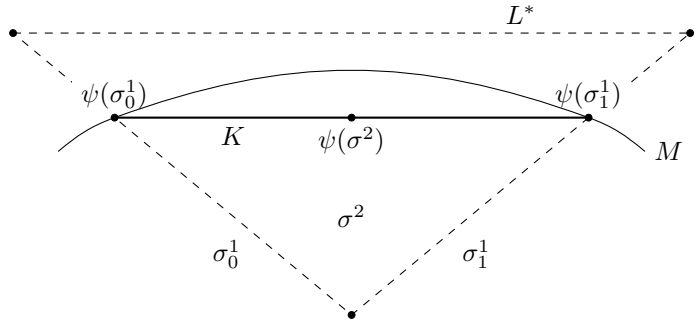

Also, if $\tau^r = \langle q_0,\cdots, q_r\rangle$ intersects $M$, then
\begin{equation}\label{20.4}
  \mu_k > \frac{2\alpha}{N} 
\end{equation}
where, again, $0 \leq k \leq r$ and $\psi(\tau^r) = \sum \mu_i q_i$.

Given $k$, let $\tau^s$ be an $s$-face of $\tau^r$ containing $q_k$, which intersects $M$ by Claim \ref{4.5.6}  of the previous subsection.
By \eqref{20.3}, the barycentric coordinate $\mu'$ of $\psi(\tau^s)$ corresponding to $q_k$ is at least $2\alpha$.
By \eqref{20.2}, $\mu_k$ is the average of at most $N$ barycentric coordinates, one of which is $\mu'$.
Thus, \eqref{20.4} holds.

Since $K$ is an order complex, each simplex $\sigma$ of $K$ has a natural order.
Let $\alt(\sigma)$ be the altitude from the last (highest dimension) vertex of $\sigma$.
We wish to show that
\begin{equation}\label{20.5}
  \alt(\sigma^r) \geq rb.
\end{equation}
Let $\sigma^r = \langle \psi(\tau_0),\cdots,\psi(\tau^r) \rangle$ be a simplex of $K$, with vertices in ascending order. 
Let $\sigma^{r-1}$ be the $(r-1)$-face of $\sigma^r$ opposite $\psi(\tau_r)$.
Then $\sigma^{r-1}$ lies in $\tau_{r-1}$.

If $\dim(\tau_r) = t \geq r$, Lemma \ref{14b}, \eqref{20.4}, \eqref{18.2}, and \eqref{17.5} give
\begin{align*}
  \alt(\sigma^r) &\geq \dist(\psi(\tau_r), \partial \tau_r)\\
                 &\geq t!\Theta(\tau_r)\diam(\tau_r)\inf\{\mu_0,\dots,\mu_r\}\\
                 &\geq r!\Theta_0 \left( \frac{h}{4} \right) \left( \frac{2\alpha}{N}\right) \\
                 &\geq rb.
\end{align*}
Because $\vol_r(\sigma^r) = \alt(\sigma^r) \vol_{r-1}(\sigma^{r-1})/r$, induction gives $\vol_r(\sigma^r) \geq b^r$.
Thus, by $\eqref{18.2}$ and the above,
\begin{equation}\label{20.6}
  \Theta(\sigma) \geq \frac{b^r}{2\delta^r} = \frac{\beta^r}{2^r} \geq \frac{\beta^n}{2^n} = \Theta_1.
\end{equation}

\subsection{Embedding simplices in $M$}
The following claim is proved in \cite{Whitney} on pg. 132.  
\begin{claim}
  If $\sigma$ is a simplex of $K$ and $\sigma \subset U_{6\delta}(p)$ for $p \in M$ then 
  \begin{equation}\label{21.1}
    \sigma \subset U_{\lambda\xi}(P_{p,\xi}).
  \end{equation}
\end{claim}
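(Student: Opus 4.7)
The plan is to exploit the fact that every vertex of $K$ is, by construction, a convex combination of points of $M$, and then to invoke the tangent-plane approximation that was built into the choices of $\xi$ and $\lambda$ via Lemmas \ref{8a} and \ref{8b}.

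First I would record the structure of the vertices. By \eqref{20.2}, each vertex $\psi(\tau^r)$ of $K$ is either itself a point of $M$ (when $r = s$) or the barycenter of the points $\psi(\tau^s_i) \in M$ corresponding to those $s$-faces of $\tau^r$ that meet $M$. Call these points of $M$ the \emph{base points} of $\psi(\tau^r)$. Since every base point of $\psi(\tau^r)$ lies in $\tau^r$, and $\diam(\tau^r) < 2\delta$ by \eqref{18.2}, each base point is within $2\delta$ of $\psi(\tau^r)$.

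Next I would show that every base point appearing in a vertex of $\sigma$ lies in $M_{p, \xi}$. Since $\sigma \subset U_{6\delta}(p)$, each vertex $\psi(\tau^r)$ of $\sigma$ satisfies $|\psi(\tau^r) - p| < 6\delta$; combined with the $2\delta$-bound above, each associated base point $q$ satisfies $|q - p| < 8\delta = \xi$, using the choice $\delta = \xi/8$ from \eqref{17.5}. Because orthogonal projection onto $P_p$ is non-expanding and fixes $p$, we have $|\pi_p(q) - p| \leq |q - p| < \xi$, so $\pi_p(q) \in P_{p, \xi}$ and hence $q \in M_{p, \xi}$. I would then apply the tangent-plane approximation encoded in \eqref{8.6}, namely $|q - \pi_p(q)| = \dist(q, P_p) < \lambda \xi$ for every $q \in M_{p, \xi}$ (the reason Lemma \ref{8a} was invoked when selecting $\xi_0$). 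Averaging: writing $\psi(\tau^r) = \sum \mu_i q_i$ with $q_i$ base points,
$$\left| \psi(\tau^r) - \sum \mu_i \pi_p(q_i) \right| \leq \sum \mu_i \, |q_i - \pi_p(q_i)| < \lambda \xi,$$
and $\sum \mu_i \pi_p(q_i) \in P_{p, \xi}$ because $P_{p, \xi} = U_\xi(p) \cap P_p$ is convex.

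Finally, every $x \in \sigma$ is itself a convex combination $\sum \nu_j v_j$ of the vertices $v_j$ of $\sigma$; choosing $v_j' \in P_{p, \xi}$ within $\lambda \xi$ of $v_j$ by the previous step, the point $\sum \nu_j v_j'$ lies in $P_{p, \xi}$ by convexity and is within $\lambda \xi$ of $x$, proving $\sigma \subset U_{\lambda \xi}(P_{p, \xi})$. The main subtlety is coordinating the constants so that the tangent-plane estimate \eqref{8.6} is in fact available throughout the $\xi$-neighborhood; this is precisely the purpose of the cascade of choices \eqref{17.4}--\eqref{17.5}, and the argument really uses nothing beyond convexity of $\psi$ once that coordination is in place.
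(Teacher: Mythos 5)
Your proof is correct and is essentially the argument of Whitney that the paper cites for this claim: each vertex of $K$ in $\sigma$ is a convex combination of points of $M$ lying within $2\delta+6\delta=8\delta=\xi$ of $p$, those points lie in $M_{p,\xi}$ and hence within $\lambda\xi$ of $P_{p,\xi}$, and convexity of $P_{p,\xi}$ (equivalently, of its $\lambda\xi$-neighborhood) transfers the bound first to the vertices and then to all of $\sigma$. One small point of justification: since $M_{p,\xi}$ is really the local sheet of $M$ over $P_{p,\xi}$ (under the literal global-preimage reading, \eqref{8.6} would fail), the membership of the base points in $M_{p,\xi}$ is best deduced from \eqref{8.3}, namely $\dist(p, M\setminus M_{p,\xi})\geq \xi$ combined with your bound $|q-p|<\xi$, rather than from non-expansiveness of $\pi_p$ alone -- but your estimate is exactly what that requires, so the argument stands as written.
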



Combining the above claim and \eqref{10.4} then gives that 
\begin{equation}\label{21.2}
  K \subset U_{2\lambda\xi}(M) \text{ and }|\pi^*(q) - q| < 4\lambda\xi \qquad \forall \, q \in K.
\end{equation}

The following Lemma is proved by Whitney on page 132 of \cite{Whitney}.  
We include the proof here because equations \eqref{fullsigma'} and \eqref{21.5}
are needed to prove Theorem \ref{euctriangulation}.

\begin{lem}[IV, 21a \cite{Whitney}]\label{21a}
  Let $\sigma = \langle p_0,\cdots , p_n \rangle$ be an $n$-simplex of $K$ (with vertices in increasing order) and let $p_0',\dots,p_n'$ be any points such that
  \begin{equation*}
    |p_i'-p_i| \leq \frac{\lambda \xi}{\alpha} \quad  i = 0,\dots,n.
  \end{equation*}
  Then $\sigma' = \langle p_0', \cdots,  p_n' \rangle$ is a simplex in $U^*$, and $\pi^*$ embeds $\sigma'$ in $M$.
\end{lem}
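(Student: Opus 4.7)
The plan is to verify three things about $\sigma'$ in turn: that it is a non-degenerate $n$-simplex with fullness bounded below, that it is contained in the tubular neighborhood $U^*$, and finally that $\pi^*$ restricted to $\sigma'$ is an embedding into $M$. The first two steps are short, while the third requires the real work.

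For the first step, I would apply Lemma \ref{14c} to the perturbation $\sigma \rightsquigarrow \sigma'$. By (20.5) the altitudes of $\sigma$ are at least $nb$, hence $\diam(\sigma) \geq nb = n\beta\delta$. The choice $\lambda \leq \rho_0'\alpha\beta/8$ from (17.4), together with $\delta = \xi/8$, gives $\lambda\xi/\alpha \leq \rho_0'\beta\delta \leq \rho_0'\diam(\sigma)$. Since $\Theta(\sigma) \geq \Theta_1$ by (20.6), the choice of $\rho_0'$ in the setup (with $\Theta_1$ playing the role of $\Theta_0$ and $\Theta_1/2$ the role of $\epsilon$) guarantees that $\sigma'$ is an $n$-simplex with $\Theta(\sigma') \geq \Theta_1/2$. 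For the second step, (21.2) gives $\sigma \subset U_{2\lambda\xi}(M)$, so every point of $\sigma'$ lies within $2\lambda\xi + \lambda\xi/\alpha$ of $M$. Using $\alpha < 1/4$ and $\xi \leq \alpha\delta_0/(3\lambda)$ from (17.5), this distance stays strictly below $\delta_0$, placing $\sigma'$ in $U^* \subset U'$.

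The third step is where the main obstacle lies. Since $\sigma' \subset U'$ and $\pi^*$ is smooth on $U'$, it suffices to show that $\pi^*|_{\sigma'}$ is both a local immersion and globally injective. Both reduce to the transversality statement
\begin{equation*}
  P(\sigma') \cap P^*_p = \{0\} \quad \text{for every } q \in \sigma',\ p = \pi^*(q),
\end{equation*}
where $P(\sigma')$ is the affine plane spanned by $\sigma'$ and $P^*_p$ is the normal plane. Indeed, the kernel of $D\pi^*_q$ at any $q \in U'$ is exactly $P^*_{\pi^*(q)}$, so transversality yields pointwise injectivity of $D\pi^*|_{T\sigma'}$ (with quantitative lower bound via \eqref{nearid}); and for global injectivity, if $\pi^*(q_1)=\pi^*(q_2)$ for distinct $q_1,q_2\in\sigma'$, then the straight segment $\gamma(t)=(1-t)q_1+tq_2$ has $\pi^*\circ\gamma$ returning to the same point, so $(\pi^*\circ\gamma)'(t_0)=0$ for some $t_0$, which forces $q_2-q_1 \in P(\sigma') \cap P^*_{\pi^*(\gamma(t_0))}$, contradicting the transversality.

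To establish the transversality I would first argue it for the unperturbed $\sigma$, then extend by perturbation. For $\sigma$, the barycentric construction (20.2) expresses each vertex $\psi(\tau^r)$ as an average of points $\psi(\tau^s_i) \in M$ lying on $s$-faces with $\ind(P_p,P(\tau^s_i))>\alpha$ by Claim \ref{4.5.3}, for any $p\in M$ nearby. Combining this independence bound with the uniform fullness estimate $\Theta(\sigma)\geq\Theta_1$ from (20.6) yields a uniform lower bound, depending only on $n$, $m$, $\alpha$, and $\Theta_1$, for the smallest singular value of $\pi_{P_p}|_{P(\sigma)}$. The perturbation $\sigma \rightsquigarrow \sigma'$ shifts each vertex by at most $\lambda\xi/\alpha$, and since $\diam(\sigma)\geq\beta\delta$ the induced change in the affine plane $P(\sigma')$ (as a map on the unit sphere of $V(P(\sigma'))$) is bounded in operator norm by a universal multiple of $\lambda/(\alpha\beta)$. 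The sharp inequality $\lambda\leq\alpha\gamma/128$ from (17.4) makes this perturbation negligible compared to the lower bound just obtained, so the transversality persists for $\sigma'$, completing the proof.
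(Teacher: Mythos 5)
Your first two steps (fullness of $\sigma'$ via Lemma \ref{14c}, and $\sigma'\subset U^*$ via \eqref{21.2} and the definition of $\xi$) coincide with the paper's argument. The gap is in your third step, in two places. First, your global-injectivity argument via a Rolle-type claim is invalid: a smooth path in $M\subset\R^m$ that returns to its starting point need not have vanishing velocity at any time (a round circle already fails this), so ``$\pi^*\circ\gamma$ returns to the same point, hence $(\pi^*\circ\gamma)'(t_0)=0$'' does not follow. This particular defect is easy to repair, and the repair is exactly what the paper does: by Theorem \ref{10A} the fiber of $\pi^*$ over $p$ is the normal disk $Q_p^*\subset P_p^*$, so if $\pi^*(q)=\pi^*(q')=p$ with $q\neq q'$ in $\sigma'$, then the unit vector $u=(q'-q)/|q'-q|$ lies simultaneously in the direction space of $P(\sigma')$ and of $P_p^*$, contradicting transversality directly; no intermediate value argument is needed.

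Second, and more seriously, the transversality itself is never actually established. You propose to get a uniform lower bound on the smallest singular value of $\pi_{P_p}|_{P(\sigma)}$ by ``combining'' the independence bound $\ind(P_p,P(\tau^s))>\alpha$ of Claim \ref{4.5.3} with the fullness bound \eqref{20.6}, but no mechanism is given, and none is available from those two facts alone: Claim \ref{4.5.3} constrains the planes of the ambient $s$-faces of $L^*$, not the plane spanned by $\sigma$, and fullness of $\sigma$ says nothing about how $P(\sigma)$ sits relative to $P_p$. What actually controls the tilt of $P(\sigma')$ is the flatness of $M$ near $p$: one must show $\sigma\subset U_{4\delta}(p)$ for the specific $p=\pi^*(q)$ (this uses \eqref{10.4} and the estimate $|p-q|<\delta$), invoke \eqref{21.1} to conclude $\sigma'\subset U_{2\lambda\xi/\alpha}(P_p)$, combine this thin-slab property with the vertex-separation bound \eqref{21.5} and the fullness \eqref{fullsigma'}, and apply Lemma \ref{15c} to get $|u-\pi_p(u)|\le\frac{64\lambda}{\alpha\gamma}\le\frac12$ for every unit $u$ in $P(\sigma')$, as in \eqref{uproj}. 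This is the heart of the lemma, and it is precisely the part your proposal replaces with an unsupported assertion; your subsequent ``perturb from $\sigma$ to $\sigma'$'' step (whose tilt estimate, incidentally, must also carry a factor of the fullness $\Theta_1$, not just $\beta$) cannot rescue a base case that was never proved.
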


\begin{proof}
  First, note that $\lambda \xi/\alpha \leq \rho_0' \beta 8 \delta/8 = \rho_0' b$, $\Theta(\sigma) \geq \Theta_1$, and $\diam(\sigma) \geq b$. 
  By choice of $\rho_0'$, we have that 
  \begin{equation}\label{fullsigma'}
    \Theta(\sigma')\geq \frac{\Theta_1}{2}.
  \end{equation}
  By \eqref{21.2} and the definition of $\xi$, we get that $\sigma' \subset U_\eta(M)$, where
  \begin{equation}\label{21.4}
    \eta = \frac{ \lambda\xi}{\alpha} + 2\lambda \xi \leq \frac{3 \lambda \xi}{\alpha} \leq \delta_0.
  \end{equation}
  So $\sigma'$ is a simplex in $U^*$.
  Let $q \in \sigma'$ and suppose $q \in P_p^*$ where $p \in M$; thus $\pi^*(q)=p$. 
  By \eqref{10.4}, \eqref{21.4} and the definition of $\lambda$,
  \begin{equation*}
    |p - q| \leq 2 \left( \frac{3\lambda \xi}{\alpha} \right) = \frac{48 \lambda \delta}{\alpha} \leq 6 \rho_0' \beta \delta < \delta.
  \end{equation*}
  Also, $\lambda \xi/\alpha < \delta$, so $\sigma \subset U_{4\delta}(p)$ since $\diam(\sigma) < 2\delta$.
  Equation \eqref{21.1} implies that $\sigma \subset U_{\lambda\xi}(P_p)$ and thus $\sigma \subset U_{2\lambda\xi/\alpha}(P_p)$.
  \eqref{20.5} gives that $|p_i - p_0| \geq b$, thus
  \begin{equation}\label{21.5}
    |p_i' - p_0'| \geq b - \frac{2\lambda\xi}{\alpha} \geq b - 2\rho_0'b \geq b/2.
  \end{equation}

  By Lemma \ref{15c}, if $u$ is a unit vector in $P(\sigma')$, we get that
  \begin{equation}\label{uproj}
    |u - \pi_p (u)| \leq \frac{2(2\lambda\xi/\alpha)}{(n-1)!(\Theta_1/2)(b/2)} = \frac{64 \lambda}{\alpha\gamma} \leq \frac{1}{2}.
  \end{equation}
  Since $P_p^*$ is normal to $P_p$, $u$ cannot be in $P_p^*$. 
  Also, $\pi^*$ maps any nonzero vector in $\sigma'$ at $q$ to a nonzero vector and thus $\pi^*$ is regular at $q$.
  If $q'$ is any other point of $\sigma'$, letting 
  \begin{equation*}
   u = \frac{q' - q}{|q'-q|}
  \end{equation*}
  in the above gives that $q' \notin P_p^*$, and thus $\pi^*(q) \neq \pi^*(q')$.
\end{proof}
\subsection{The complexes $K_p$}

For $p \in M$, let $L_p^*$ be the subcomplex of $L^*$ containing simplices which touch $\bar{U}_{4\delta}(p)$, together with their faces.
Then 
\begin{equation}\label{22.1}
  L_p^* \subset U_{6\delta}(p).
\end{equation}
Let $K_p''$ be the complex in $P_p$ formed by the intersections of $P_p$ with the simplices of $L_p^*$ and let $K_p'$ be the barycentric subdivision of $K_p''$.
By the earlier discussion, $P_p$ intersects a simplex of $L_p^*$ if and only if $M$ does.
Let $K_p$ be the subcomplex of $K$ containing all simplices with vertices $\psi(\tau)$ for each simplex $\tau$ in $L_p^*$.
Then there is a one-to-one correspondence $\phi_p$ of the vertices in $K_p$ onto the vertices of $K_p'$ which defines a simplicial mapping $\phi_p$ that is an isomorphism of $K_p$ onto $K_p'$.

The following three claims are all proved in \cite{Whitney} pg. 133.

\begin{claim}
  If $q \in K_p$, then
  \begin{equation}\label{22.2}
    |\phi_p(q) - q| < \frac{\lambda \xi}{\alpha}.
  \end{equation}
\end{claim}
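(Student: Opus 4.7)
The plan is to reduce the inequality to the case of $s$-dimensional vertices of $K_p$ via two triangle-inequality arguments, and then to handle that case using the transversality bound from Claim \ref{4.5.3}. First I would reduce to vertices: because $\phi_p$ is a simplicial isomorphism, every $q \in K_p$ lies in some simplex $\sigma \subset K_p$ and can be written as $q = \sum \mu_i v_i$ in barycentric coordinates, with $\phi_p(q) = \sum \mu_i \phi_p(v_i)$. The triangle inequality gives $|\phi_p(q) - q| \leq \max_i |\phi_p(v_i) - v_i|$, so it suffices to prove \eqref{22.2} when $q = \psi(\tau)$ is a vertex. Next, I would reduce to $s$-simplices: by construction, $\phi_p(\psi(\tau))$ is the vertex of $K_p'$ associated with $\tau$, namely the barycenter of the polytope $P_p \cap \tau$. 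For $\tau = \tau^r$ with $r > s$, the vertices of $P_p \cap \tau^r$ are precisely the points $P_p \cap \tau_i^s$, as $\tau_i^s$ ranges over the $s$-faces of $\tau^r$ that meet $M$ (equivalently, that meet $P_p$, by Claim \ref{4.5.4}); combined with the averaging formula \eqref{20.2} this gives
\[
\phi_p(\psi(\tau^r)) - \psi(\tau^r) = \frac{1}{k}\sum_{i=1}^{k} \bigl[\phi_p(\psi(\tau_i^s)) - \psi(\tau_i^s)\bigr],
\]
so a second triangle inequality reduces everything to the case $\tau = \tau^s$.

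For the base case, set $m = \psi(\tau^s) = M \cap \tau^s$ (a single point by Claim \ref{4.5.5}) and $e = P_p \cap \tau^s$. By \eqref{22.1} and \eqref{17.5}, both points lie in $\tau^s \subset U_{6\delta}(p) \subset U_\xi(p)$, so in particular $m \in M_{p,\xi}$, and the $M_{p,\xi}$-graph bound used earlier in the proof of Lemma \ref{lemma:missingLowDimensionalSimplices} yields $|m - \pi_p(m)| \leq \lambda\xi$. Since $e \in P_p$ and $m - e \in P(\tau^s)$, the definition of $\ind(\cdot,\cdot)$ combined with Claim \ref{4.5.3} gives
\[
|m - \pi_p(m)| = \bigl|(m-e) - \pi_p(m-e)\bigr| \geq \ind(P_p, P(\tau^s))\cdot|m - e| > \alpha\,|m - e|,
\]
and rearranging yields $|m - e| < \lambda\xi/\alpha$, which is precisely the desired bound on a vertex of the form $\psi(\tau^s)$.

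The hard part will be the combinatorial identification in the second reduction: showing that the vertex set of the polytope $P_p \cap \tau^r$ (which need not itself be a simplex) is exactly $\{P_p \cap \tau_i^s\}$ as $\tau_i^s$ ranges over the $s$-faces of $\tau^r$ meeting $M$. I would handle this by combining the transversality of Claim \ref{4.5.3} with the separation estimate \eqref{19.1} from Claim \ref{4.5.1}, which keeps $P_p$ uniformly far from every face of $\tau^r$ of dimension below $s$; together these preclude any vertex of $P_p \cap \tau^r$ from arising on a lower-dimensional face, pinning down the vertex set as claimed. Once this identification is in hand, the two reductions and the base case assemble immediately into \eqref{22.2}.
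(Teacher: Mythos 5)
The paper does not prove this claim itself but cites Whitney \cite{Whitney}, p.~133, and your argument is correct and follows essentially that route: the graph estimate \eqref{8.5}--\eqref{8.6} together with the independence bound \eqref{19.2} handles the vertices $\psi(\tau^s)$, the averaging formula \eqref{20.2} (matched with the vertex set of the cell $P_p \cap \tau^r$, which is pinned down exactly as you indicate via \eqref{19.1} and Claim \ref{4.5.3}, noting also that distinct $s$-faces give distinct intersection points) handles the remaining vertices, and affineness of the simplicial map $\phi_p$ handles general points. The one point to make explicit is that the subdivision point of each cell of $K_p''$ must be taken to be the average of the cell's vertices (as in Whitney's construction), not the cell's center of mass; with that reading your displayed identity, and hence \eqref{22.2}, goes through.
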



\begin{claim}
  \begin{equation}\label{22.3}
    K \cap U_{2\delta}(p) \subset K_p.
  \end{equation}
\end{claim}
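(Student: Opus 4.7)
The plan is to unpack the combinatorial definition of $K$ as an order complex and reduce the claim to a one-line diameter estimate. Given $q \in K \cap U_{2\delta}(p)$, I note that $q$ lies in some (closed) simplex $\sigma$ of $K$, which by construction has the form $\sigma = \langle \psi(\tau_0), \ldots, \psi(\tau_r) \rangle$ for some chain $\tau_0 \subsetneq \tau_1 \subsetneq \cdots \subsetneq \tau_r$ of simplices of $L^*$, each of which meets $M$. To conclude $q \in K_p$, it is enough to show that every $\tau_i$ lies in $L_p^*$; and since $L_p^*$ is closed under taking faces, it suffices to prove this for the top simplex $\tau_r$.

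My first step will be a purely combinatorial observation: because $\tau_i$ is a face of $\tau_r$ for each $i$, the point $\psi(\tau_i) \in \tau_i$ also lies in $\tau_r$. Hence $\sigma \subset \tau_r$, and in particular $q \in \tau_r$.

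The second step is a triangle-inequality estimate. By \eqref{18.2}, every simplex of $L^*$ satisfies $\diam(\tau_r) < 2\delta$. Combined with the standing assumption $|q - p| < 2\delta$, this gives, for every $x \in \tau_r$,
\begin{equation*}
  |x - p| \leq |x - q| + |q - p| < 2\delta + 2\delta = 4\delta,
\end{equation*}
so $\tau_r \subset U_{4\delta}(p) \subset \bar{U}_{4\delta}(p)$. Thus $\tau_r$ touches $\bar{U}_{4\delta}(p)$ and therefore belongs to $L_p^*$, and so do all of its faces $\tau_0, \ldots, \tau_{r-1}$. Consequently $\sigma$ is a simplex of $K_p$, giving $q \in K_p$.

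I do not expect a genuine obstacle here: the whole argument is a matter of unwinding definitions, and the only point that requires care is remembering that in an order complex of a poset of simplices ordered by inclusion, every vertex of a simplex $\sigma$ of $K$ is a point in the largest face $\tau_r$ of the underlying chain, so $\sigma$ sits geometrically inside $\tau_r$. Once that is noted, the diameter bound from \eqref{18.2} does all the real work.
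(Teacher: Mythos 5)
Your argument is correct and is essentially the standard one (the paper defers this claim to Whitney, whose proof on p.~133 runs the same way): since $\psi(\tau_i)\in\tau_i\subseteq\tau_r$, the simplex of $K$ containing $q$ lies in $\tau_r$, and the diameter bound \eqref{18.2} plus the triangle inequality forces $\tau_r\subset U_{4\delta}(p)$, so $\tau_r$ and all its faces lie in $L_p^*$ and the simplex lies in $K_p$. No gaps.
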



Choose an orientation for $P_p$ and use this to orient all $n$-simplices of $K_p'$.
This makes $K_p'$ an oriented $n$-psuedomanifold with boundary; \eqref{22.1} and the definition of $L_p^*$ then give that
\begin{equation}\label{22.4}
  K_p' \subset U_{6\delta}(p),\quad \partial K_p' \subset P_p - \bar{U}_{4\delta}(p).
\end{equation}

Define a mapping $\pi_p^*$ of $K_p$ into $P_p$ as follows: Each $q \in K_p$ is in a unique $P_{p'}^*$, so $p' = \pi^*(q)$.
By \eqref{22.1}, $|q-p| \leq 6\delta$ and by \eqref{21.2}, $|p'-q|<4\lambda\xi<\delta$; thus $|p-p'|<\xi$.

By Lemma \ref{10a}, $P_{p'}^*$ intersects $P_{p'}$ in a unique point, which we call $\pi_p^*(q)$.

\begin{claim}
  If $q \in K_p$, then
  \begin{equation}\label{22.5}
    |\pi_p^*(q)-q| < 6\lambda\xi.
  \end{equation}
\end{claim}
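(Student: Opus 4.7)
The plan is to bound $|\pi_p^*(q) - q|$ via the triangle inequality through the intermediate point $p' = \pi^*(q) \in M$. Both $q$ and $\pi_p^*(q)$ lie in the normal plane $P_{p'}^*$ (as does $p'$ itself), so it is natural to compare all three points inside $P_{p'}^*$:
\begin{equation*}
  |\pi_p^*(q) - q| \;\leq\; |\pi_p^*(q) - p'| \;+\; |p' - q|.
\end{equation*}

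The second summand is immediate: $q \in K_p \subset K$, and \eqref{21.2} gives $|\pi^*(q) - q| < 4\lambda\xi$, so $|p'-q| < 4\lambda\xi$.

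For the first summand, I would use that $\pi_p^*(q)$ is defined as the unique point of $P_{p'}^* \cap P_p$, while $p'$ is $P_{p'}^* \cap P_{p'}$ (the base point of the normal plane). Since the paragraph preceding the claim already established $|p - p'| < \xi$, Lemma \ref{10a} applies and controls the deviation of $P_p$ from $P_{p'}$ near $p'$. Concretely, $M$ lies within $\lambda\xi$ of its tangent plane $P_p$ at scale $\xi$ (cf.\ \eqref{8.6}, invoked exactly as in the proofs of \eqref{21.1} and \eqref{21.2}), and the tangent planes $P_p$ and $P_{p'}$ differ correspondingly by a small angle. A short geometric computation then bounds how far the intersection $P_{p'}^* \cap P_p$ can wander from $P_{p'}^* \cap P_{p'} = \{p'\}$, yielding $|\pi_p^*(q) - p'| < 2\lambda\xi$. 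Combining the two estimates gives $|\pi_p^*(q) - q| < 2\lambda\xi + 4\lambda\xi = 6\lambda\xi$, as claimed.

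The main obstacle is extracting the quantitative $2\lambda\xi$ bound on $|\pi_p^*(q) - p'|$ from Lemma \ref{10a}. What this really requires is that the angle between the normal plane $P_{p'}^*$ and the tangent plane $P_p$ stays sufficiently close to $\pi/2$, so that the affine subspace $P_{p'}^*$ cannot meet $P_p$ at a point far from $p'$. This ``near-perpendicularity'' is precisely the flatness statement Lemma \ref{10a} encodes at scale $\xi$, and it is the reason $\lambda$ was chosen small at the beginning of the section; once that constant is in hand, the bound drops out without any further delicate argument.
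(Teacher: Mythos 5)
Your proposal is correct and is essentially the same argument as the source the paper cites for this claim (Whitney, p.~133): split through $p'=\pi^*(q)$, use \eqref{21.2} to get $|p'-q|<4\lambda\xi$, and use the near-perpendicularity of $P_{p'}^*$ to $P_p$ to bound $|\pi_p^*(q)-p'|$. The ``short geometric computation'' you leave implicit is the one-liner: setting $v=\pi_p^*(q)-p'\in P_{p'}^*$, the fact that $\pi_p^*(q)\in P_p$ together with \eqref{8.3} and \eqref{8.5} gives $|v-\pi_p(v)|=\dist(p',P_p)<\lambda\xi$, while \eqref{10.12} gives $|\pi_p(v)|\leq\lambda|v|$, so $|v|\leq\lambda\xi/(1-\lambda)\leq 2\lambda\xi$ since $\lambda\leq 1/2$, which is exactly your claimed bound.
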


\subsection{Proof of Theorem \ref{euctriangulation}}
Let $p \in M$ and choose an orientation of $P_p$, which gives an orientation of both $K_p'$ and $K_p$.
Define the set $R_p$ to be all $q \in K_p$ such that $\pi_p^*(q) \in P_{p, 3\delta}$.
To prove Theorem \ref{euctriangulation} we need the following Lemma, whose proof can be found in \cite{Whitney} page 134.

\begin{lem}[IV, 23b \cite{Whitney}]\label{23b}
  For each $p \in M$, the map
  \begin{equation*}
    \pi_p^* \colon R_p \to P_{p,3\delta}
  \end{equation*}
  is bijective.
\end{lem}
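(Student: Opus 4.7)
The plan is to exploit that $\pi_p^*$ is uniformly close on $K_p$ to the simplicial isomorphism $\phi_p \colon K_p \to K_p'$, reducing bijectivity to a degree argument inside $P_p$. By \eqref{22.2} and \eqref{22.5}, for every $q \in K_p$ we have
$$|\pi_p^*(q) - \phi_p(q)| \leq |\pi_p^*(q)-q| + |q-\phi_p(q)| < 6\lambda\xi + \lambda\xi/\alpha,$$
which by our choice of constants is much smaller than $\delta$.

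First I would verify that $\pi_p^*$ is injective on each closed $n$-simplex $\sigma$ of $K_p$. The map $\pi_p^*$ factors as $q \mapsto \pi^*(q) \mapsto P^*_{\pi^*(q)} \cap P_p$; Lemma \ref{21a} (applied with $p_i' = p_i$) embeds $\sigma$ smoothly in $M$ via $\pi^*$, and Lemma \ref{10a} together with the near-parallelism of nearby normal planes makes the second step a local diffeomorphism onto a region of $P_p$ near $p$. Quantitatively, the tangent-vector control \eqref{uproj} and the fullness bound \eqref{fullsigma'} show that on $\sigma$ the smooth map $\pi_p^*$ is so close to the affine isomorphism $\phi_p|_\sigma$ that it remains injective.

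Second I would control the boundary. By \eqref{22.4}, $\phi_p(\partial K_p) = \partial K_p' \subset P_p \setminus \bar U_{4\delta}(p)$; combined with the displacement estimate above this yields $\pi_p^*(\partial K_p) \subset P_p \setminus \bar U_{3\delta}(p)$, so $R_p$ lies in the pseudomanifold interior of $K_p$. The straight-line homotopy $H_t(q) = (1-t)\phi_p(q) + t\pi_p^*(q)$ likewise stays outside $\bar U_{3\delta}(p)$ whenever $q \in \partial K_p$, hence $\pi_p^*$ and $\phi_p$ have the same covering degree over $P_{p,3\delta}$. That degree is $\pm 1$ because $\phi_p$ is a homeomorphism of $K_p$ onto $K_p' \supset P_{p,3\delta}$.

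Combining Step 1 with the properness afforded by Step 2 yields a covering map $\pi_p^*\colon R_p \to P_{p,3\delta}$ of degree $\pm 1$ onto the simply connected ball $P_{p,3\delta}$, which must be a homeomorphism; bijectivity follows. \textbf{The main obstacle} is Step 1: ruling out that $\pi_p^*$ identifies distinct points within a single $n$-simplex of $K_p$, since a priori the further projection from $M$ to $P_p$ along the normal planes $P^*_{p'}$ could collapse them. The quantitative tangent-plane estimates inherited from the proof of Lemma \ref{21a} are essential here.
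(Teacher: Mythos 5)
Your overall strategy is the same one the paper defers to (Whitney IV, 23b): compare $\pi_p^*$ with the simplicial isomorphism $\phi_p$ using \eqref{22.2} and \eqref{22.5}, use \eqref{22.4} to push $\pi_p^*(\partial K_p)$ outside $\bar U_{3\delta}(p)$, and then run a degree argument over $P_{p,3\delta}$ via the straight-line homotopy. Those parts are fine: with the constants of \eqref{17.4}--\eqref{17.5} the displacement $6\lambda\xi + \lambda\xi/\alpha$ is indeed much smaller than $\delta$, so the boundary estimate and the equality of degrees of $\pi_p^*$ and $\phi_p$ over the ball both check out.

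The gap is in the last step, ``per-simplex injectivity $+$ properness $\Rightarrow$ covering map of degree $\pm 1$ $\Rightarrow$ homeomorphism.'' Injectivity of $\pi_p^*$ on each \emph{closed} $n$-simplex does not make it a local homeomorphism along an $(n-1)$-face shared by two $n$-simplices: a small ``fold'' (two adjacent simplices sent to overlapping regions on the same side of the image of their common face) is compatible both with the $C^0$-closeness to $\phi_p$ (the fold only needs to be of size comparable to $6\lambda\xi$) and with total degree $1$, since the two folded sheets contribute $+1$ and $-1$; points in the overlap would then have three preimages, and bijectivity fails. What rules this out---and is the heart of the argument on p.~134 of \cite{Whitney}---is an orientation statement: using \eqref{uproj} and \eqref{fullsigma'} one shows that $\pi_p^*$ maps \emph{every} $n$-simplex of $K_p$ with positive Jacobian relative to the orientations transported by $\phi_p$ from the chosen orientation of $P_p$; only with all local degrees $+1$ does ``algebraic count $=1$'' force exactly one geometric preimage of each point of $P_{p,3\delta}$. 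A second, smaller issue: your justification of Step 1 (``so close to the affine isomorphism that it remains injective'') is not a valid inference, since $C^0$-closeness never implies injectivity. Per-simplex injectivity can be obtained honestly: if $\pi_p^*(q_1)=\pi_p^*(q_2)=x$ with $q_1,q_2$ in one simplex, then by \eqref{22.5} and \eqref{21.2} we get $|x-\pi^*(q_i)|<10\lambda\xi<\delta_0$, so $x$ lies in both normal disks $Q^*_{\pi^*(q_1)}$ and $Q^*_{\pi^*(q_2)}$, whence Theorem \ref{10A} gives $\pi^*(q_1)=\pi^*(q_2)$ and Lemma \ref{21a} gives $q_1=q_2$; but even with this repair the Jacobian-sign (no-fold) step is still missing and must be supplied before the degree argument yields bijectivity.
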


\begin{proof}[Proof of Theorem \ref{euctriangulation}]
  Let $p \in M$, then by Lemma \ref{23b}, there is $q \in K_p$ with $\pi_p^*(q) = p$.
  Thus $\pi^*(q) = p$ and $\pi^*$ is onto.
  Suppose $\pi^*(q') = p$ for $q' \in K$. 
  By \eqref{21.2}, we have that $|q'-p| < 4\lambda \xi < \delta$. 
  Using \eqref{22.3}, $q' \in K_p$ and applying \eqref{22.5} gives that
  \begin{equation*}
    |\pi_p^*(q') -p| \leq |\pi_p^*(q')-q'| + |q' -p| < 6\lambda \xi + \delta < 3\delta,
  \end{equation*}
  so $q' \in R_p$. By Lemma \ref{23b}, $q' = q$. So $\pi^*$ is bijective.
  Since $\pi^*$ is a bijective map between compact Hausdorff spaces, it is a homeomorphism.
  From Theorem \ref{10A}, we know that $\pi^*$ is smooth.
  By the proof of Lemma \ref{21a}, we know $\pi^*$ always has positive Jacobian.
  Therefore, $\pi^*$ is a diffeomorphism.

  Now, using Lemma \ref{21a}, \eqref{21.2} and the fact that $4\lambda\xi < \lambda \xi /\alpha$ gives that the simplicial complex in $\R^m$ whose vertices are $\pi^*(v)$ for each vertex $v$ of $K$ is still homeomorphic to $M$ via $\pi^*$.
  Call this simplicial complex $T$. 
  Then every simplex of $T$ is a secant simplex of $M$.
  By \eqref{fullsigma'}, we get that every simplex of $T$ has fullness at least $\Theta_1/2 := \Theta_{n,m}$, a number which depends only on $n$ and $m$. 

  Let $v \in T_q \sigma$ for $q \in \sigma$ with $\sigma$ a simplex of $T$.
  Then \eqref{uproj} gives that
  \begin{equation*}
    |\pi_{\pi^*(q)} (v)| \geq |v| - |v - \pi_{\pi^*(q)} (v)| \geq |v| - \frac{1}{2}|v| = \frac{1}{2}|v|.
  \end{equation*}

  By \eqref{21.5} and \eqref{21.2},
  \begin{equation*}
    \frac{\beta \delta}{2} = \frac{b}{2} \leq \text{ length of an edge in }T \leq 2\delta + 8\lambda\xi \leq 3\delta.
  \end{equation*}
  If $\bar L = 3\delta$ and $C_{n,m} = \beta/6$, we have that $C_{n,m}$ depends only on $n$ and $m$ and thus \eqref{edgelengthbounds} holds in $T$.
\end{proof}


\section{Encoding a Riemannian metric}\label{section:riemannian-to-discrete}

This section is devoted to the proof of Theorem \ref{thm:riemannian-to-discrete}. We are given a smooth triangulated manifold $(M, \T)$,
and we would like to put a Riemannian metric on $M$ whose geometry captures the combinatorics of the triangulation.

The proof is broken down into four parts.  In the first part we will use a metric $g_s$ which will, in general, have singularities, to produce a Riemannian metric $g_{\d}$ which will depend on a parameter $\d > 0$.  The second step of the proof will be to show that we can choose $\d$ small enough so that $|\text{Vol}_{g_{\d}}(M) - \text{Vol}_{\T}(M)| < \e$.  Letting $g = g_{\d}$ completes the proof of property (1).  We will then give a constructive method to homotope a closed $g_s$-polygonal path $\g$ in $M$ to a closed edge loop $p$ in such a way that 
$$ l_{\T}(p) \leq \kappa_n l_{g_s}(\g). $$
Finally, we will argue that, once $\delta$ is small enough, the above inequality is preserved when considering closed geodesics in the metric $g$ instead of polygonal paths in the metric $g_s$, completing the proof of property (2).

\subsection*{Constructing the Riemannian metric $g_{\d}$ from the singular metric $g_s$}  The singular metric $g_s$ is defined by simply requiring that every facet of $\T$ be isometric to an equilateral $n$-simplex in $\E^n$ with volume 1.  Thus it is clear that $\text{Vol}_{g_s}(M) = \text{Vol}_{\T}(M)$.  The singular set of $g_s$ will be contained in the codimension two skeleton $\T^{(n-2)}$ of $\T$.  A simplex $\s \in \T^{(n-2)}$ will be contained in the singular set of $g_s$ if and only if ``too many or too few" facets of $\T$ meet at $\s$.  For example, if $n=2$, a vertex $v$ is contained in the singular set of $g_s$ if and only if the number of 2-simplices of $\T$ which contain $v$ is different from six.

To construct the Riemannian metric $g_\d$ we need to alter $g_s$ within small neighborhoods of simplices of $\T^{(n-2)}$, and then patch these metrics together using a smooth partition of unity.  Let us first carefully construct this collection of neighborhoods which we will denote $\O$.  The construction is recursive with $n-1$ steps, in the $l^{th}$ step we construct a collection of neighborhoods $\O_l$ with $\O_{l-1} \subset \O_l$ for $0 \leq l \leq n-2$.  Then $\O := \O_{n-2}$.

First choose $\d$ so that $0 < \d < \frac{1}{3}$, and for each vertex $v$ of $\T^{(n-2)}$ insert the open ball $b(v, \d)$ into $\O_0$.  Note that, since $\d < \frac{1}{3}$, each of these balls will be disjoint\footnote{If $\sigma$ is an equilateral simplex with volume $1$, then its edge lengths are an
 increasing function of its dimension $n$, hence the edge lengths will always be $\geq \frac{2}{\sqrt[4]{3}}>1$.}.  Next, let $e$ be an edge of $\T^{(n-2)}$.  Let $\bar{e} = e \setminus U$ where $U$ is the union of all of the sets contained in $\O_0$.  Since $e$ contains exactly two vertices, $\bar{e}$ is simply the interior of the edge $e$ with a segment of length $\d$ removed from each end.  Let $\O_1$ consist of all of the sets in $\O_0$, as well as a set of the form $b(\bar{e}, k_1 \d)$ for each edge $e \in \T^{(1)}$, where $k_1$ is a small positive constant.  For $k_1$ small enough, $b(\bar{e}, k_1 \d)$ will have nontrivial intersection with exactly two other members of $\O_1$, the open $\d$ balls about the vertices of $e$.

Defining the remaining collection of $\O_l$ recursively, let $\s \in \T^{(n-2)}$ be an $l$-dimensional simplex.  Let $\bar{\s} = \s \setminus U$ where $U$ is the union of all of the sets contained in $\O_{l-1}$.  Insert the open neighborhood $b(\bar{\s}, k_l \d)$ into $\O_l$ where $k_l < k_{l-1}$ is a small positive constant.  Also, let $\O_{l-1} \subset \O_l$.  For $k_l$ small enough, $b(\bar{\s}, k_l \d)$ will have nontrivial intersection with exactly the members of $\O_l$ corresponding to the faces of $\s$.  Letting $\O:= \O_{n-2}$ completes the construction.

Let $\mathcal{O} = \bigcup_{U \in \O} U$ and let $\mathcal{U} = b(M \setminus \mathcal{O}, k_{n-1} \d)$ be the open neighborhood of radius $k_{n-1} \d$ (for some small positive constant $k_{n-1} < k_{n-2}$) about the closed set $M \setminus \mathcal{O}$.  For $k_{n-1}$ small enough, $\mathcal{U}$ will not meet any faces of $\T$ with codimension greater than or equal to two.  

The collection $\O \cup \{ \mathcal{U} \}$ forms an open cover of $M$.  Since we are assuming that the smooth structure on $M$ is compatible with the triangulation $\T$, we may define smooth PL coordinates within each neighborhood of $\O$.  Within $\mathcal{U}$, we can define smooth PL coordinates interior to each $n$-simplex of $\T$.  Let $\{ \phi_i \}$ be a smooth partition of unity subordinate to $\O \cup \{ U \}$.

Interior to each open set $U \in \O$ we will define a smooth metric $g_U$.  Then the resulting Riemannian metric $g_\d$ will be defined by 

$$g_\d = \phi_{\mathcal{U}} g_s + \sum_{U \in \O} \phi_U g_U. $$

Let $U \in \O$ be arbitrary.  By our construction of $\O$, $U$ corresponds to some $l$-dimensional simplex $\s$.  We define the metric 
$g_U$ to simply be the pullback of the Euclidean metric under the smooth PL charts about $\s$ constructed above.  
We will express this metric in generalized cylindrical coordinates about $\s$ (which, if $l = 0$, would just be generalized spherical 
coordinates).  More specifically, if we denote the coordinates by $x_1, \ldots , x_l, r, \theta_1, ..., \theta_{n-l-1}$, then 
$$g_U = \sum_{i=1}^{l} dx_i^2 + dr^2 + r^2 d \theta_1^2 + \sum_{i=2}^{n-l-1} r^2 \sin^2(\theta_1)...\sin^2(\theta_{i-1}) d \theta_i^2   $$
where the last sum is void if $l=n-2$, and where the domains for the coordinates are:
$$ -\infty < x_i < \infty 	\qquad	  0 \leq r < \infty 	\qquad	 0 \leq \theta_j \leq \pi \; \; (j \neq n-l-1) 	\qquad	 0 \leq \theta_{n-l-1} < 2\pi $$

\subsection*{Choosing $\d$ so that $|\text{Vol}_{g_{\d}}(M) - \text{Vol}_{\T}(M)| < \e$}  To avoid overcomplicating this proof we will proceed as follows.  The simplest case of when $n=2$ will be carried out in full detail.  We then move on to the case when $n=3$ and work out in full detail the part that differs from the $n=2$ case.  The $n=3$ case captures the general behavior of the problem, and so it will be easy to explain from here how the result follows for general $n$.

\vskip 5pt

\noindent \underline{\bf Case $n=2$:} In this case, $\T^{(n-2)}$ is just the vertex set of $\T$ and thus $\O$ is a collection of disjoint subsets\footnote{This is one of the two main differences between the $n=2$ and the higher dimensional cases.  The other difference is that the triangulation automatically has high regularity. Vertex links are always $\S^1$ (hence the triangulation is PL), and points are always smoothly embedded in $M$.} of $M$.  Let $v \in \T^{(n-2)}$ be a vertex with corresponding open set $V \in \O$.  $g_\d$ differs from $g_s$ only in such neighborhoods $V$, and in $V$ the metric $g_\d$ has the form
$$ g_\d = \phi_V g_V + \phi_\U g_s. $$

Recall that we expressed $g_V$ in polar coordinates.  i.e., $g_V = dr^2 + r^2 d \theta^2$ where $0 \leq r < \d$ and $0 \leq \theta < 2 \pi$.  We can locally express $g_s$ in Cartesian coordinates by $g_s = dx_1^2 + dx_2^2$.  To compute $|\text{Vol}_{g_s}(V) - \text{Vol}_{g_\d}(V)|$, we need to convert $g_s$ to polar coordinates within $V$.  This is done as in any multivariable calculus course by setting $x_1 = \bar{r} \cos(\bar{\theta})$ and $x_2 = \bar{r} \sin(\bar{\theta})$.  But notice that the domains for these parameters are $0 \leq \bar{r} < \d$ and $0 \leq \bar{\theta} < \frac{\pi}{3} t_v$ where $t_v$ is the number of facets (in this case, triangles) containing $v$.  We then obtain that $0 \leq \frac{6}{t_v} \bar{\theta} < 2 \pi$, and so $\bar{r} = r$ and $\bar{\theta} = \frac{t_v}{6} \theta$.  Substituting these values and computing the differentials yields
$$ g_s = dr^2 + \left( \frac{t_v}{6} \right)^2 r^2 d \theta^2 $$
and thus
$$ g_\d = \phi_V g_V + \phi_\U g_s = dr^2 + \left( \phi_V + \left( \frac{t_v}{6} \right)^2 \phi_\U \right) r^2 d \theta^2. $$
We then compute the change in volume, $|\text{Vol}_{g_{\d}} (V) - \text{Vol}_{g_s} (V)|$, to equal
\begin{align*}
&=  \left| \int_{0}^{2 \pi} \int_{0}^{\d} \sqrt{\left( \phi_V + \left( \frac{t_v}{6} \right)^2 \phi_\U \right) r^2} dr d \theta - \int_{0}^{2 \pi} \int_{0}^{\d} \frac{t_v}{6} r dr d \theta \right| \\ 
&= \left| \int_{0}^{2 \pi} \int_{0}^{\d} \left( \sqrt{\left( \phi_V + \left( \frac{t_v}{6} \right)^2 \phi_\U \right) } - \frac{t_v}{6} \right) r dr d \theta \right| \\
&\leq \left| \sqrt{1 + \left(\frac{t_v}{6} \right)^2 } - \frac{t_v}{6} \right| \int_{0}^{2 \pi} \int_{0}^{\d} r dr d \theta \\
&= \left| \sqrt{1 + \left(\frac{t_v}{6} \right)^2 } - \frac{t_v}{6} \right| \pi \d^2 
\end{align*}
which approaches 0 as $\d$ approaches 0.  Since $M$ is compact there are only finitely many vertices, completing the proof when $n = 2$.

\vskip 5pt

\noindent \underline{\bf Case $n=3$:} When $n=3$, $\T^{(n-2)}$ is now the 1-skeleton of $\T$.  So the neighborhoods in $\O$ are no longer disjoint.  Let $v$ be a vertex of $\T$ and let $e$ be an edge containing $v$.  Denote their corresponding neighborhoods in $\O$ by $U_v$ and $U_e$, respectively.  The same argument as in the 2-dimensional case applies to the regions of $M$ where only $U_v$ or $U_e$ intersects $\U$.  What we need to show is that $\d$ can be chosen small enough so that $|\text{Vol}_{g_\d}(W) - \text{Vol}_s(W)| < \e$ with $W = \U \cap U_v \cap U_e$.  In what follows we adapt the notation $g_v := g_{U_v}$ and $g_e := g_{U_e}$.

In $W$, $g_v$ is written in spherical coordinates $g_v = (dr^v)^2 + (r^v)^2 (d \theta_1^v)^2 + (r^v)^2 $ $ \sin^2(\theta_1^v) (d \theta_2^v)^2$ (where the $v$ is emphasized in the coordinates to distinguish from the coordinates of $g_e$).  The domains of the parameters depend on $\d$ and $k_1$, but we will not get too caught up in those details here.  To compute $\text{Vol}_{g_\d}(W)$ we convert both $g_\U$ and $g_e$ to spherical coordinates.  In exactly the same way as when $n=2$ we have that $g_{\U} = (dr^v)^2 + C_v^2 (r^v)^2 (d \theta_1^v)^2 + C_e^2 (r^v)^2 \sin^2(\theta_1) (d \theta_2^v)^2$ where $C_v$ and $C_e$ are positive constants that depend on the number of facets of $\T$ that contain $v$ and $e$, respectively.\footnote{In fact, just as in the 2-dimensional case, $C_e = \frac{t_e}{6}$ where $t_e$ is the number of facets which contain $e$.  To compute $C_v$, one needs to compute the \emph{solid angle} $\varphi$ subtended by the three edges emanating from $v$.  Then $C_v = \frac{2 \pi}{\varphi t_v}$, where $t_v$ is the number of tetrahedra containing $v$.}

We need to convert $g_e$ into spherical coordinates within $W$.  Recall that $g_e$ is written in cylindrical coordinates within $U_e$, i.e. $g_e = dx_1^2 + (dr^e)^2 + (r^e)^2 (d \theta_1^e)^2  $.  Notice that by an orthogonal transformation within $U_v$, we may align the axis orthogonal to $\theta_2^v$ with the edge $e$ (see Figure \ref{firstfigure}).  With this choice of coordinates we see that $\theta_2^v = \theta_1^e$.  The domains for these two parameters may differ, but we can get an overestimate for the volume of $W$ by integrating over the larger of the two domains.  Also notice that
$$ x^1 = r^v \cos (C_v \theta_1^v) $$
$$ r^e = r^v \sin (C_v \theta_1^v) $$
where the constant $C_v$ is the same constant as in $g_\U$ and is due to the change in the domain of $\theta_1^v$ between $g_\d$ and $g_s$, exactly as in the 2-dimensional case where the constant was $\frac{t_v}{6}$.  Computing the differential then yields that $g_e = (dr^v)^2 + C_v^2 (r^v)^2 (d \theta_1^v)^2 + (r^v)^2 \sin^2(\theta_1^v) (d \theta_2^v)^2 $.  Thus the difference in volume, $|\text{Vol}_{g_{\d}} (V) - \text{Vol}_{g_s} (V)|$, equals

\begin{align*}
&= \left| \iiint_{W} \sqrt{\left( \phi_v + \phi_e + \phi_\U C_v^2 \right) \left( \phi_v + \phi_e C_e^2 + \phi_U C_v^2 \right) } (r^v)^2 \sin(\theta_1^v) dV \right. \\
&- \left. \iiint_{W} C_v C_e (r^v)^2 \sin(\theta_1^v) dV \right| \\
&\leq \left| \sqrt{\left( 1 + 1 + C_v^2 \right) \left( 1 + C_e^2 + C_v^2 \right) } - C_v C_e \right| \iiint_{W} (r^v)^2 \sin(\theta_1^v) dV \\
&= \left| \sqrt{\left( 1 + 1 + C_v^2 \right) \left( 1 + C_e^2 + C_v^2 \right) } - C_v C_e \right| \text{Vol}_{g_v}(W).
\end{align*}
It is clear that $\text{Vol}_{g_v}(W)$ goes to 0 as either $\d$ or $k_1$ approaches 0 (recall that $k_1$ was introduced in the construction of $\O$).  This completes the proof when $n=3$.


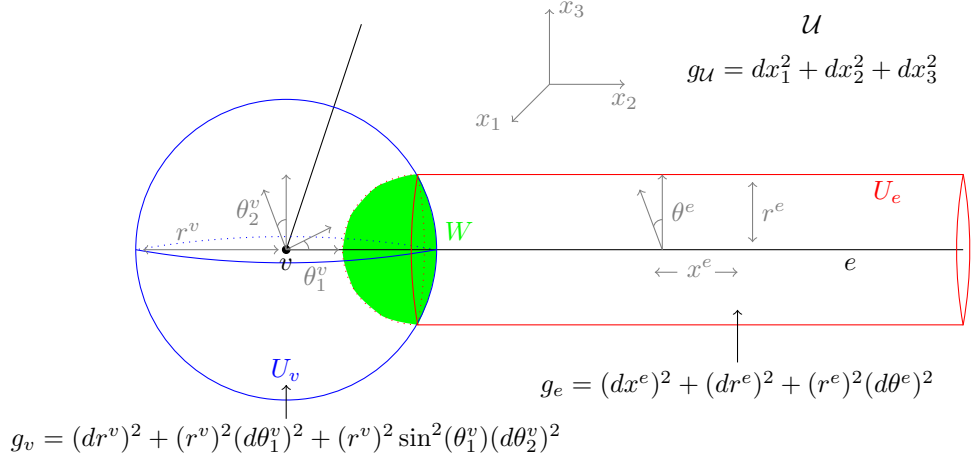
\begin{figure}[tb]
\begin{tikzpicture}

\path [fill=green] (1.75,1) to [out=175, in=205] (1.25,0.86603) to [out=205,in=235] (0.88397,0.5) to [out=235,in=265] (0.75,0) to [out=265,in=295] (0.88397,-0.5) to [out=295,in=325] (1.25,-0.86603) to [out=325,in=355] (1.75,-1) to [out= 65, in=295] (1.75,1);

\draw (0,0) -- (1,3);
\draw (0,0) -- (9,0);
\draw[blue] (0,0) circle [radius=2];
\draw[red] (1.75,1) -- (9,1);
\draw[red] (1.75,-1) -- (9,-1);
\draw[blue] (-2,0) arc [radius=11.5, start angle=260, end angle = 280];
\draw[blue, dotted] (2,0) arc [radius=11.5, start angle=80, end angle = 100];
\draw[red] (1.75,1) arc [radius=5.78, start angle=170, end angle = 190];
\draw[red, dotted] (1.75,-1) arc [radius=5.78, start angle=-10, end angle = 10];
\draw[red, dotted] (1.75,1) arc [radius=1, start angle=90, end angle = 270];
\draw[red] (9,1) arc [radius=5.78, start angle=170, end angle = 190];
\draw[red] (9,-1) arc [radius=5.78, start angle=-10, end angle = 10];

\draw[fill] (0,0) circle [radius=0.05];
\draw (0,0)node[below]{$v$};
\draw (7.5,0)node[below]{$e$};

\draw[red] (8,0.5)node[above]{$U_e$};
\draw[blue] (0,-1.6)node{$U_v$};
\draw (7,3)node{$\mathcal{U}$};
\draw (7,2.4)node{$g_{\mathcal{U}} = dx_1^2 + dx_2^2 + dx_3^2$};
\draw (6,-1.8)node{$g_e = (dx^e)^2 + (dr^e)^2 + (r^e)^2 (d \theta^e)^2$};
\draw [->] (6,-1.5) -- (6,-0.8);
\draw (0,-2.5)node{$g_v = (dr^v)^2 + (r^v)^2 (d \theta_1^v)^2 + (r^v)^2 \sin^2(\theta_1^v)(d \theta_2^v)^2$};
\draw [->] (0,-2.25) -- (0,-1.8);

\draw[gray] (5.5,0)node[below]{$x^e$};
\draw[gray] [->] (5.7,-0.3) -- (6,-0.3);
\draw[gray] [->] (5.2,-0.3) -- (4.9,-0.3);
\draw[gray] (6.2,0.5)node[right]{$r^e$};
\draw[gray] [<->] (6.2,0.1) -- (6.2,0.9);
\draw[gray] (5,0.5)node[right]{$\theta^e$};
\draw[gray] [->] (5,0) -- (5,1);
\draw[gray] [->] (5,0) -- (4.7,0.8);
\draw[gray] (5,0.4) arc [radius=0.12, start angle=90, end angle = 170];

\draw[gray] (-0.2,0.5)node[left]{$\theta^v_2$};
\draw[gray] [->] (0,0) -- (0,1);
\draw[gray] [->] (0,0) -- (-0.3,0.8);
\draw[gray] (0,0.4) arc [radius=0.12, start angle=90, end angle = 170];
\draw[gray] (0.4,-0.1)node[below]{$\theta^v_1$};
\draw[gray] [->] (0,0) -- (0.7,0);
\draw[gray] [->] (0,0) -- (0.6,0.3);
\draw[gray] (0.3,0) arc [radius=0.13, start angle=0, end angle = 60];
\draw[gray] (-1.3,0)node[above]{$r^v$};
\draw[gray] [<->] (-0.1,0) -- (-1.9,0);

\draw[gray] (3,1.7)node[left]{$x_1$};
\draw[gray] (4.5,2.2)node[below]{$x_2$};
\draw[gray] (3.5,3.2)node[right]{$x_3$};
\draw[gray] [<->] (4.5,2.2) -- (3.5,2.2) -- (3.5,3.2);
\draw[gray] [->] (3.5,2.2) -- (3,1.7);

\draw[green] (2.3,0)node[above]{$W$};

\end{tikzpicture}
\caption{The region $W$ is shaded in green, and the three different coordinate charts are written in gray.  Notice that, by an orthogonal change of coordinates within $U_v$, we have aligned the respective axes so that $\theta^e = \theta^v_2$.}
\label{firstfigure}
\end{figure}


\vskip 5pt

\noindent \underline{\bf Case $n>3$:} Once $n>3$ we must deal with intersections of three or more sets in $\O$.  But we always change coordinates to those of the simplex of least dimension and so we need only consider the intersection of two neighborhoods, one of which being that lowest dimensional simplex.  Denote these simplices by $\sigma$ and $\tau$ with dim($\s$)=$j$, dim($\tau$)=$l$, $j < l$.  Let $U_\s$ and $U_{\tau}$ denote their corresponding neighborhoods in $\O$, and denote their corresponding metrics by $g_\s$ and $g_{\tau}$.  All that we need to show is that we can change the coordinates in $\tau$ to coordinates in $\s$ in a way so that each component of the metric $g_{\tau}$ only differs from the corresponding component of $g_\s$ by a constant.  Notice that if $j > 0$ then we can project out the coordinates corresponding to the simplex $\s$ in both $U_\s$ and $U_{\tau}$.  So we may assume that $j=0$, $\s$ is a vertex, and thus the coordinates in $U_\s$ are generalized spherical coordinates
$$ g_\s = d\rho^2 + \rho^2 d \theta_1^2 + \sum_{i=2}^{n-1} \rho^2 \sin^2(\theta_1) ... \sin^2(\theta_{i-1}) d \theta_i^2 $$
where $0 \leq \rho < \infty$, $0 \leq \theta_i \leq \pi$ for $1 \leq i \leq n-2$, and $0 \leq \theta_{n-1} < 2 \pi$.  In $U_{\tau}$ the metric is written in generalized cylindrical coordinates by
$$ g_{\tau} = \sum_{i=1}^{l} dx_i^2 + d r^2 + r^2 d \bar{\theta_1}^2 + \sum_{i=2}^{n-l-1} r^2 \sin^2(\bar{\theta_1}) ... \sin^2(\bar{\theta}_{i-1}) d \bar{\theta_i}^2 $$
where $0 \leq r < \infty$, $0 \leq \bar{\theta_i} \leq \pi$ for $1 \leq i \leq n-l-2$, and $0 \leq \bar{\theta}_{n-l-1} < 2 \pi$.

Exactly as in the $n=3$ case, we can rotate the coordinates in $U_\s$ so that $\theta_{i +l} = \bar{\theta_i}$ for $1 \leq i \leq n-l-1$.  That still leaves $(n-1) - (n-l-1) = l$ directions in which the coordinates in $U_\s$ can be rotated.  Each of the parameters $\theta_1, \ldots, \theta_l$ in $U_\s$ measures an angle from a fixed positive axis.  We also rotate the coordinates in $U_\s$ so that the (positive) axis associated with $\theta_i$ corresponds to the coordinate $x_i$ in $U_{\tau}$.  We then have that $x_i = \rho \cos(C_i \theta_i)$ where the constant $C_i$ arrises from converting the domains of the associated variables in exactly the same way as the 2-dimensional case.  By projecting out $\rho$ along each of the dimensions $x_1, ..., x_l$ we can write $r$ as the product $r = \rho \sin(C_1 \theta_1) ... \sin(C_l \theta_l)$.  Thus, this change in coordinates only differs from the standard (Euclidean) change in coordinates by (possibly) multiplying each variable by a constant factor, and thus the coordinates of the corresponding metric only differ by a constant.  

\begin{rem}
In the construction of the metrics $g_\delta$, our metrics are changed by stretching or compressing in the radial directions 
about each simplex -- with the amount of distortion determined by the combinatorics of the link of the simplex. It follows that
there exists a constant $C_{\T}$ (depending solely on the triangulation $\T$) 
with the property that each of the identity maps from $(M, g_s)$ to $(M, g_\delta)$ are all $C_\T$-Lipschitz. 
\end{rem}

\subsection*{Lipschitz homotopies of closed paths to closed edge loops}

First, note that Theorem \ref{thm:riemannian-to-discrete} (2) is trivial for {\it any} constant if the closed path is null-homotopic.
Also, using the {\it Birkhoff curve shortening process}, a description of which can be found in \cite{Klingenberg}, 
we can homotope any closed path $\eta$ to a closed geodesic $\g$ in such a way that $\ell_g(\g) \leq \ell_g(\eta)$.
If we can then find an edge path $p$ satisfying Theorem \ref{thm:riemannian-to-discrete} (2) with respect to $\g$, then
	\begin{equation*}
	\ell_d(p) \leq \kappa_n \ell_g(\g) \leq \kappa_n \ell_g(\eta).
	\end{equation*}
So we may reduce to the case that $\g$ is a closed $g$-geodesic which is not null-homotopic.

The following Lemma and Corollary handle the case when $\g$ is a $g_s$-geodesic.

\begin{lem}\label{lem:g_s geodesics}
Let $\Delta$ be a Euclidean equilateral $n$-simplex with volume $1$, 
and let $\a$ be a straight line segment whose endpoints lie on the boundary of $\Delta$.
Then $\a$ can be homotoped, rel. endpoints, to a path $\a^{(1)} \subseteq \partial \Delta$ such that
	\begin{equation*}
	\ell(\a^{(1)}) \leq C_n \ell(\a)
	\end{equation*}
where the constant $C_n$ depends only on $n$.
\end{lem}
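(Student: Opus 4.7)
The plan is to take $\alpha^{(1)}$ to be a shortest path on $\partial\Delta$ connecting the endpoints $p_0, p_1$ of $\alpha$. The required homotopy is automatic: since $\Delta$ is convex, the straight-line homotopy $H(s,t) = (1-s)\alpha(t) + s\alpha^{(1)}(t)$ fixes the endpoints and stays inside $\Delta$. Thus the lemma reduces to proving the bi-Lipschitz bound $d_\partial(p_0,p_1) \leq C_n\,|p_0-p_1|$, where $d_\partial$ denotes the intrinsic path metric on $\partial\Delta$.

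For the main local case, assume $p_0 \in F_0$ and $p_1 \in F_1$ for two distinct $(n-1)$-faces sharing the $(n-2)$-face $F_{01}$. The dihedral angle of the equilateral simplex along $F_{01}$ is $\theta_n = \arccos(1/n)$. Unfold $F_1$ into the affine span of $F_0$ by rotating through $\theta_n$ about the affine span of $F_{01}$, producing an image $\tilde p_1$ of $p_1$ on the opposite side of the span of $F_{01}$ from $p_0$. In adapted coordinates with the span of $F_{01}$ sitting in $\{x_{n-1}=x_n=0\}$, a direct computation gives
\begin{equation*}
|p_0 - \tilde p_1|^2 - |p_0 - p_1|^2 = 2ab\,(1 + \cos\theta_n),
\end{equation*}
where $a,b$ are the perpendicular distances from $p_0, p_1$ to the span of $F_{01}$. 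Combined with $|p_0-p_1|^2 \geq 2ab\,(1-\cos\theta_n)$ (which follows from $2ab \leq a^2+b^2$ applied to the same expansion), this yields $|p_0-\tilde p_1| \leq \sqrt{2/(1-\cos\theta_n)}\,|p_0-p_1| = \sqrt{2n/(n-1)}\,|p_0-p_1|$. When the unfolded segment from $p_0$ to $\tilde p_1$ crosses the span of $F_{01}$ at a point of $F_{01}$ itself, folding back produces a path on $F_0\cup F_1 \subseteq \partial\Delta$ of this length.

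The global bound then follows by compactness. The ratio $d_\partial(p,q)/|p-q|$ is continuous off the diagonal of $\partial\Delta\times\partial\Delta$, and the local analysis above (together with analogous estimates near higher-codimension faces, discussed below) shows it is bounded in a neighborhood of every diagonal point, with local bounds depending only on $n$. Compactness of $\partial\Delta$ then gives a finite global bound $C_n$, and taking $\alpha^{(1)}$ to be a $d_\partial$-geodesic completes the proof.

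The main obstacle is the local analysis near a face of codimension greater than $2$, where several $(n-1)$-faces meet. I expect this to yield to induction on codimension: the link of any $(n-k)$-face of an equilateral simplex is itself an equilateral $(k-1)$-simplex, so the local bi-Lipschitz ratio can be estimated via a lower-dimensional instance of the same lemma. Alternatively, one can build an explicit boundary path through a suitable vertex of $\Delta$ and bound its length using iterated applications of the law of cosines, since all face-to-face angles in an equilateral $n$-simplex are explicit functions of $n$.
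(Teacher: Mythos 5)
Your two-facet unfolding computation is correct (the dihedral angle of the regular $n$-simplex is indeed $\arccos(1/n)$, and the identity $|p_0-\tilde p_1|^2-|p_0-p_1|^2=2ab(1+\cos\theta_n)$ together with $|p_0-p_1|^2\geq 2ab(1-\cos\theta_n)$ does give the factor $\sqrt{2n/(n-1)}$), and the reduction of the lemma to the bi-Lipschitz comparison $d_\partial(p,q)\leq C_n|p-q|$ is legitimate, since the straight-line homotopy inside the convex set $\Delta$ disposes of the homotopy rel endpoints. However, the part you yourself flag as the main obstacle is a genuine gap, and neither of your proposed repairs works as stated. The vertex-path idea fails whenever $p_0,p_1$ lie near the relative interior of a positive-dimensional face of codimension $\geq 3$ but far from every vertex: a detour through a vertex $v$ then has length bounded below by a quantity that does not tend to $0$ as $|p_0-p_1|\to 0$, so it cannot give a multiplicative bound. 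The link-induction idea is closer to workable, but the local model at a point of an $(n-k)$-face is $\R^{n-k}\times C$ with $C$ the cone over a regular \emph{spherical} $(k-1)$-simplex, not a Euclidean equilateral simplex, so it is not literally a lower-dimensional instance of the same lemma; you would need a separate bi-Lipschitz statement for boundaries of simplicial convex cones. And your compactness step cannot be invoked until the local bound is established at \emph{every} diagonal point, so the argument genuinely hinges on closing this case.

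For comparison, the paper avoids the stratified case analysis entirely: it radially projects $\alpha$ from the barycenter $B$ of $\Delta$ onto $\partial\Delta$ (after perturbing $\alpha$ off $B$). This projection is Lipschitz, with constant depending only on the circumradius-to-inradius ratio of $\Delta$ (hence only on $n$), on the region at distance at least some fixed $D$ from $B$; and any chord of $\Delta$ passing within $D$ of $B$ has length bounded below by $2(\mathrm{dist}(B,\partial\Delta)-D)$ while its projected image has length at most a multiple of $\mathrm{diam}(\Delta)$, so those chords are handled by a crude separate estimate. If you want to keep your unfolding approach, the cleanest fix for the high-codimension case is to prove the needed cone statement by exactly this radial projection localized at the cone apex --- at which point you may as well run the projection argument globally from the start.
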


\begin{proof}
First, at the cost of a
very slight perturbation, we may assume that $\a$ misses the barycenter $B$ of the $n$-simplex $\Delta$.
Now, consider radial projection from $B$ to $\partial \Delta$.
Note that this map is {\it not} Lipschitz -- a small segment near the
barycenter will get stretched out to a long segment on the boundary.
But it {\it is} $C^\prime$-Lipschitz if one restricts to segments that are at least some
fixed (uniform) distance $D$ away from the barycenter.
Finally, if one
considers the segments that pass closer than $D$ to the barycenter, their
lengths are uniformly bounded below, while the length of their projected
images are uniformly bounded above.
So again, their is some constant $C^{\prime \prime}$
so that $\ell(\a^{(1)}) \leq C^{\prime \prime} \ell(\a)$.
Letting $C_n := \text{max}\{C^\prime, C^{\prime \prime} \}$ completes the proof.
\end{proof}

Let $\kappa^{\prime \prime} := C_n \cdot C_{n-1} \cdot \hdots \cdot C_2$.  
Define a {\it $g_s$-polygonal path} to be a path which is a $g_s$-geodesic (i.e., a straight line) when restricted to any simplex of $\T$.
Also, define the {\it support} of any path to be the collection of facets that it intersects.
Recursively applying Lemma \ref{lem:g_s geodesics} proves:

\begin{cor}\label{cor:g_s geodesics}
Let $\a$ be a closed $g_s$-polygonal path in $(M,\T)$.  
Then there exists an edge path $p$, freely homotopic to $\a$ and with the same support, such that
	\begin{equation}\label{eqn:Corollary}
	e_n \ell_d(p) \leq \kappa^{\prime \prime} \ell_{g_s} (\a)
	\end{equation}
where $e_n$ is the length of an edge of an equilateral $n$-simplex with volume $1$.
\end{cor}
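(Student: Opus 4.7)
The plan is to recursively apply Lemma~\ref{lem:g_s geodesics}, pushing the path from the $k$-skeleton to the $(k-1)$-skeleton one dimension at a time for $k = n, n-1, \ldots, 2$, and then to convert the resulting path on the $1$-skeleton into a genuine edge loop. Since $\a$ is $g_s$-polygonal, it consists of one straight segment per facet of $\T$ in its support, with consecutive segments meeting on the $(n-1)$-skeleton.

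First, inside each facet (an equilateral $n$-simplex of volume $1$), I apply Lemma~\ref{lem:g_s geodesics} to homotope the single straight segment into the boundary of the facet, increasing length by a factor of at most $C_n$. Because the Lemma fixes endpoints, these local homotopies glue along common $(n-1)$-faces to yield a free homotopy of $\a$ to a closed loop $\a^{(1)}$ lying in the $(n-1)$-skeleton of $\T$, still with the same support and with $\ell_{g_s}(\a^{(1)}) \leq C_n \, \ell_{g_s}(\a)$. Now $\a^{(1)}$ is polygonal inside each $(n-1)$-face, and since each such face is itself a regular equilateral $(n-1)$-simplex and the Lemma's constant is scale-invariant, I can apply the Lemma piece by piece inside every $(n-1)$-face to push $\a^{(1)}$ onto the $(n-2)$-skeleton at the cost of a further factor of $C_{n-1}$. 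Iterating down to $k=2$ produces a closed path $p^\prime$ contained in the $1$-skeleton, freely homotopic to $\a$ with the same support, and satisfying $\ell_{g_s}(p^\prime) \leq \kappa^{\prime \prime} \ell_{g_s}(\a)$. Finally, because each edge of $\T$ is isometric to a segment of $g_s$-length $e_n$, any partial traversal of an edge by $p^\prime$ can be either completed (when homotopically necessary) or pruned (when merely a backtrack), producing an edge loop $p$ with $e_n \, \ell_d(p) \leq \ell_{g_s}(p^\prime)$; this is exactly (\ref{eqn:Corollary}).

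The main obstacle is making the recursive step honest at intermediate stages, when a straight segment inside some $k$-face inherits an endpoint lying in the interior of that face --- a residual break point of $\a$ that survived previous projections. Lemma~\ref{lem:g_s geodesics} requires both endpoints on the boundary of the simplex, so I would first extend each such segment along its own direction until it reaches $\partial$ of the $k$-face, apply the Lemma to the extended segment, and then restrict the resulting boundary path to the portion corresponding to the original segment. This extension can only improve the length ratio, so the factor $C_k$ per stage is preserved; checking that all these local homotopies glue consistently along shared faces of $\T$ is a bookkeeping exercise but is essential for concluding that $p$ is genuinely freely homotopic to $\a$ and has the same support.
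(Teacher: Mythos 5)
Your overall strategy is the paper's: descend through the skeleta by applying Lemma \ref{lem:g_s geodesics} once per dimension $k=n,\dots,2$, then turn the resulting loop in the $1$-skeleton into an edge loop, using that every edge has $g_s$-length $e_n$. The final conversion step is fine in spirit (it amounts to straightening the loop within each edge rel the vertices at which it changes edges, which only decreases length). The problem is your handling of the intermediate stages.

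The ``extend the segment to $\partial$ of the $k$-face, project, then restrict'' maneuver does not preserve the factor $C_k$, and the claim that ``this extension can only improve the length ratio'' is false. The constant in Lemma \ref{lem:g_s geodesics} is \emph{not} a Lipschitz constant for radial projection from the barycenter: the projection is hugely expansive near the barycenter, and the Lemma's proof only salvages the bound for full chords because any chord passing close to the barycenter has length uniformly bounded below. A sub-segment with an endpoint in the interior of the face enjoys no such lower bound: a segment of length $\epsilon$ sitting at distance $\epsilon$ from the barycenter can radially project to a boundary arc of definite length, so the ratio for the restricted portion blows up as $\epsilon\to 0$, even though the extended full chord satisfies the Lemma. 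Bounding the projected sub-piece by $C_k$ times the \emph{extended} chord gives nothing, since the extension can be much longer than the original piece. Such configurations do occur (e.g.\ $\a$ may contain short segments lying inside a lower-dimensional face near its barycenter, and breakpoints of $\a$ interior to faces survive the earlier projections), so this is a genuine gap, not a bookkeeping issue. The paper's fix is simpler and avoids the problem entirely: after each projection, decompose the loop into \emph{maximal} subpaths inside the $(k-1)$-simplices; since two distinct $(k-1)$-simplices meet only along shared lower-dimensional faces, each maximal subpath automatically has both endpoints on the boundary of its simplex. One then straightens each maximal subpath rel its endpoints (this only decreases length and absorbs any interior breakpoints) and only then reapplies Lemma \ref{lem:g_s geodesics} to a genuine chord. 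If you insert that straightening step in place of the extend-and-restrict trick, your argument becomes the paper's proof.
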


\begin{proof}
Let $\a$ be a $g_s$-polygonal path.  
We inductively apply Lemma \ref{lem:g_s geodesics} to push our path from the $k$-skeleton to the $(k-1)$-skeleton.  
At each stage, we replace a path that is straight in each $k$-simplex by a path lying in the $(k-1)$-skeleton, 
and has the property that it is at most $C_k$ times the original length.
The path in the $(k-1)$-skeleton may no longer be straight on
each $(k-1)$-simplex, but one can straighten it on each of the simplices --
this only decreases the length of $p$ which does not effect inequality \eqref{eqn:Corollary} -- and then reapply the Lemma.
Note that points of $\a$ within any simplex stay within the boundary of that simplex throughout this procedure.  
So the support does not change throughout this process.  
Finally, we end up with a loop $p$ in the $1$-skeleton, homotopic to the original loop, and satisfying inequality \eqref{eqn:Corollary}.  
\end{proof}

This Corollary proves Theorem \ref{thm:riemannian-to-discrete} (2) for geodesics in the metric $g_s$ instead of $g$.
Intuitively, for $\d > 0$ very small, geodesics should not differ much in the metrics $g$ and $g_s$.  
But this takes a little work to show directly.  
So in what follows we take an arbitrary $g$-geodesic and reduce to the case of a $g_s$-polygonal path.  
The reader who believes that such a reduction is possible may skip ahead to Section \ref{section:applications}.

Note that $e_n \geq 1$ for all $n \geq 1$, and therefore the above Corollary
gives that $\ell_d(p) \leq \kappa^{\prime \prime} \ell_{g_s}(\a)$.
Define $\kappa^\prime := 3 \kappa^{\prime \prime}$.  
Then, using the notation of the above Corollary, we have that
	\begin{equation}\label{eqn:kp}
	\ell_d(p) \leq e_n \ell_d(p) < 3e_n \ell_d(p) \leq \kp \ell_{g_s}(\a).
	\end{equation}
	
The following Lemma allows us to apply the preceeding Corollary to closed $g$-geodesics 
which do not intersect any simplex of $\T$ ``too many times''.
	
\begin{lem}\label{lem:g-polygonal paths}
Let $\g$ denote a closed $g$-polygonal path in $(M, \T)$, and let $K > 0$ be some fixed constant.  Suppose that:
	\begin{enumerate}
	\item  $\g$ is not null-homotopic
	\item  For all $\s \in \T$, $\g \cap \s$ consists of at most $K$ connected components.
	\end{enumerate}
Then there exists a closed $g_s$-polygonal path $\a$, freely homotopic to $\g$, such that
	\begin{equation*}
	\frac{1}{2} \ell_{g_s}(\a) \leq \ell_{g}(\g)
	\end{equation*}
\end{lem}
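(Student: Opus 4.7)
The plan is to construct $\a$ by a component-wise replacement within each top-dimensional simplex. For each facet $\s \in \T$ and each of the at most $K$ connected components of $\g \cap \s$ (with endpoints $p, q$ on $\partial \s$), we define the corresponding segment of $\a$ to be the unique $g_s$-straight line from $p$ to $q$ in $\s$; such a segment exists because $g_s$ restricted to $\s$ is isometric to the flat equilateral Euclidean simplex of volume $1$. Concatenating these segments in the order induced by traversing $\g$ produces a closed path $\a$ that is $g_s$-polygonal by construction.

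Next, I would verify the free homotopy. Within each simplex $\s$, the component $\g_{\s,i}$ of $\g \cap \s$ and its replacement $\a_{\s,i}$ share their endpoints on $\partial \s$, and $\s$ is contractible, so these two arcs are homotopic rel endpoints via a straight-line interpolation in barycentric coordinates. Gluing together these simplex-wise homotopies along the loop yields the desired free homotopy from $\g$ to $\a$.

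The main work lies in the length comparison. I would split the estimate into two inequalities: first, $\ell_{g_s}(\a) \leq \ell_{g_s}(\g)$, which is immediate because in each simplex the $\a$-segment is the $g_s$-shortest path between its endpoints; second, $\ell_{g_s}(\g) \leq 2\,\ell_g(\g)$, which is the substantive step. To prove the second inequality, I would decompose $\g$ into its portions inside and outside the smoothing region $\O$ used to construct $g$ from $g_s$. Outside $\O$ one has $g = g_s$, so $\ell_{g_s}(\g \cap (M \setminus \O)) = \ell_g(\g \cap (M \setminus \O))$. Inside $\O$ the two metrics are comparable only up to the Lipschitz constant $C_\T$ from the Remark following the construction of $g_\delta$, so one has $\ell_{g_s}(\g \cap \O) \leq C_\T \ell_g(\g \cap \O)$. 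Combining gives $\ell_{g_s}(\g) \leq \ell_g(\g) + (C_\T - 1)\ell_g(\g \cap \O)$, and it suffices to show that $\ell_g(\g \cap \O)$ is small relative to $\ell_g(\g)$.

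The hardest part will be bounding $\ell_g(\g \cap \O)$. Here hypotheses (1) and (2) and a careful choice of the smoothing parameter $\d$ enter together: by (1), $\g$ cannot be contracted, so $\ell_g(\g)$ is bounded below by the $g$-systole of $M$; by (2), each simplex contains at most $K$ arcs of $\g$, so the number of times $\g$ enters $\O$ is controlled by $K$ times the number of facets touched by $\g$. Each such visit to $\O$ has $g$-length at most a constant multiple of the transverse thickness of $\O$, which is proportional to $\d$. Hence by taking $\d$ small enough (depending on $K$, $C_\T$, and the $g_s$-systole, none of which depend on $\d$ in an uncontrolled way), one can force $(C_\T - 1)\ell_g(\g \cap \O) \leq \ell_g(\g)$, giving $\ell_{g_s}(\g) \leq 2\,\ell_g(\g)$ and hence the desired bound $\ell_{g_s}(\a) \leq 2\,\ell_g(\g)$.
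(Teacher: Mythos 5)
There is a genuine gap, and it sits at the step you yourself flag as the hardest: the claim that ``each visit of $\g$ to $\O$ has $g$-length at most a constant multiple of the transverse thickness of $\O$, hence $O(\d)$.'' This is false. The smoothing region $\O$ is thin only \emph{transversally} to the faces of $\T^{(n-2)}$; along a face it has macroscopic extent (the faces have $g_s$-size of order $1$, independent of $\d$). Inside the tube $b(\bar{\s},k_l\d)$ around an $(n-2)$-face, away from the lower strata, the metric $g$ is essentially a product of the flat metric along the face with a smoothed cone in the transverse plane, so segments running parallel to the face at transverse distance $<k_l\d$ are $g$-geodesics; a $g$-polygonal path can therefore spend length of order $1$ (per facet, per visit) inside $\O$, no matter how small $\d$ is. Consequently $\ell_g(\g\cap\O)$ need not tend to $0$ with $\d$ and can be comparable to $\ell_g(\g)$ itself, so the inequality $(C_\T-1)\,\ell_g(\g\cap\O)\le\ell_g(\g)$ cannot be forced by shrinking $\d$ once $C_\T>2$, and your route to $\ell_{g_s}(\g)\le 2\,\ell_g(\g)$ collapses. (Two secondary issues: the Remark in the paper gives only that the identity $(M,g_s)\to(M,g_\d)$ is $C_\T$-Lipschitz, which is the opposite direction from the comparison $\ell_{g_s}\le C_\T\,\ell_g$ you invoke, though a two-sided, $\d$-independent comparison does hold; and the lower bound ``$\ell_g(\g)\ge$ the $g$-systole'' is $\d$-dependent, since $g=g_\d$, so it must be converted into a bound in terms of $\sys_{g_s}$ before one may choose $\d$.)

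The paper avoids this trap by never comparing $\ell_{g_s}(\g)$ with $\ell_g(\g)$ along all of $\g$. Instead, it keeps $\g$ untouched outside the $\d$-neighborhood $b_\d$ of $\T^{(n-2)}$ (where $g=g_s$) and reroutes each of the components $\b_1,\dots,\b_k$ of $\g\cap b_\d$ by a nearby $g_s$-polygonal detour $\a_i$, with an \emph{additive} error estimate $\ell_{g_s}(\a_i)\le\ell_g(\b_i)+8\d|\T_i|$; hypothesis (2) bounds the total additive error by $8\d\mu K\|\T\|$, and then $\d$ is chosen smaller than $\sys_{g_s}(M)/(16\mu K\|\T\|)$ so that the error is at most $\tfrac12\sys_{g_s}(M)\le\tfrac12\ell_{g_s}(\a)$, hypothesis (1) entering only through $\ell_{g_s}(\a)\ge\sys_{g_s}(M)$. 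If you want to salvage your multiplicative scheme, the correct observation is not that $\g\cap\O$ is short, but that $g$ and $g_s$ differ only in the angular directions around each stratum, and that within a single facet the angular travel of a $g$-geodesic per visit is bounded, so the \emph{discrepancy} $\ell_{g_s}-\ell_g$ accumulated per visit (rather than the length) is $O(\d)$; that yields an additive error of the same shape as the paper's, which must then be absorbed against the $g_s$-systole exactly as above. Your construction of $\a$ by facet-wise straightening and the resulting $\ell_{g_s}(\a)\le\ell_{g_s}(\g)$ and free homotopy are fine.
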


In Lemma \ref{lem:g-polygonal paths} a {\it $g$-polygonal path} is a path which is a $g$-geodesic when restricted
to any simplex of $\T$.

\begin{proof}
Let $\g$ be a closed $g$-polygonal path.  
If $\g$ does not intersect the $\d$-neighborhood of $\T^{(n-2)}$ then $\g$ is a polygonal path in the $g_s$ metric and we are done.  
So assume that $\g$ intersects the $\d$-neighborhood of $\T^{(n-2)}$, denoted $b_{\d}$.  
Let $\b_1, \hdots, \b_k$ denote the connected components of $\g \cap b_\d$.

Consider one of these components $\b_i$.  
Let $\t_1, \hdots, \t_l$ denote the simplices of $\T$ with codimension 2 or greater for which $\b_i$ intersects the corresponding warped neighborhood $U_{\t_i}$.  

Recall that, when defining the metric $g$, we altered the $g_s$ metric about the $k_i \d$ neighborhood of each $i$-dimensional
face (and $i \leq n-2$).  
Then, via a compactness argument, we can systematically choose $k_1, \hdots, k_{n-2}$ small enough 
so that $b_{g}(x, 2\d) \setminus b_\d$ is non-empty for each point $x$ in the $(n-2)$-skeleton of $\T$.
The point here is that we can choose the region $\U$ in which we are altering the $g_s$-metric small enough so that the
open $2\d$ ball about any point of $M$, measured in the $g$-metric, contains points outside of $\U$.


Now, we want to remove the interior of $\b_i$ and replace it with a $g_s$-polygonal path, denoted $\a_i$, 
between its endpoints which stays within a $4 \d$ neighborhood of $\b_i$.  
We can always find such a path $\a_i$ so that
	\begin{equation}\label{eqn:polygonal approximation}
	\ell_{g_s}(\a_i) \leq \ell_g(\b_i) + 8 \d | \T_i |
	\end{equation}
where $\T_i $ denotes the collection of all simplices of $\T$ which contain any of $\t_1, \hdots, \t_l$.
To see this, just subdivide $\b_i$ where it intersects different simplices of $\T$.  
Sequentially approximate each of these points by a point outside of $b_\d$ at a distance of at most $4 \d$ away 
(whose existence is guaranteed by the preceding paragraph), 
and then connect each of these points by a $g_s$-polygonal path.
Equation \eqref{eqn:polygonal approximation} then follows from repeated application of the triangle inequality.
Note that inequality \eqref{eqn:polygonal approximation} is a very crude estimate.  
In general, one would not need anywhere near $|\T_i|$ polygonal pieces in any such approximation.

Now consider the polygonal path, denoted by $\a$, obtained by replacing each $\b_i$ 
with its corresponding polygonal approximation $\a_i$.  
Then one sees immediately that
	\begin{equation}\label{eqn:length of polygonal approximation}
	\ell_{g_s}(\a) \leq \ell_g(\g) + 8\d \sum_{i=1}^k |\T_i|.
	\end{equation}
Due to assumption (2) we have that
	\begin{equation}\label{eqn:bound on simplices}
	\sum_{i=1}^k |\T_i| \leq \mu K ||\T|| 
	\end{equation}
where $||\T||$ denotes the total number of simplices contained in $\T$, and $\mu$ denotes the maximal degree of any simplex of $\T$
(i.e., $\ds{ \mu = \text{max} \{ ||St(\s)|| : \, \s \in \T \} }$ where $St(\s)$ denotes the {\it closed star} of the simplex $\s$).

Now, choose 
	\begin{equation}\label{eqn:delta choice}
	\d < \frac{1}{16 \mu K ||\T||} \text{sys}_{g_s}(M)
	\end{equation}
where sys$_{g_s}(M)$ denotes the systole of $M$ with respect to the metric $g_s$.  

Combining inequalities \eqref{eqn:length of polygonal approximation}, \eqref{eqn:bound on simplices}, and \eqref{eqn:delta choice} yields:
	\begin{align*}
	\ell_g(\g) &\geq \ell_{g_s}(\a) - 8 \d \sum_{i=1}^k |\T_i| \\
	&\geq \ell_{g_s}(\a) - 8\d \mu K ||\T||  \\
	&\geq \ell_{g_s}(\a) - \frac{1}{2} \text{sys}_{g_s}(M)  \\
	&\geq \frac{1}{2} \ell_{g_s}(\a)
	\end{align*}
and where, for the last inequality, it is necessary that $\a$ is not null-homotopic.
\end{proof}

We can now complete the proof of Theorem \ref{thm:riemannian-to-discrete} with
$\kappa = 2 \kp$ for $g$-geodesics which satisfy the conditions of Lemma \ref{lem:g-polygonal paths}.  
Let $\g$ be such a $g$-geodesic.  
Let $\a$ be the $g_s$-polygonal path guaranteed by Lemma \ref{lem:g-polygonal paths},
and let $p$ be the edge path from the Corollary corresponding to $\a$.  Then
	\begin{equation*}
	\ell_d(p) \leq \kp \ell_{g_s}(\a) \leq 2 \kp \ell_g(\g) = \kappa \ell_g(\g).
	\end{equation*}

In order to complete the proof of Theorem \ref{thm:riemannian-to-discrete}, we need to 
fix some $K>0$ and reduce to the case of $g$-polygonal paths which intersect each simplex of $\T$ at most $K$ times.

In order to define $K$, let us first define 
	\begin{equation}\label{eqn:D}
	D := \frac{e_n}{\kp} \quad	\Longrightarrow	\quad	e_n = D \kp
	\end{equation}
where $e_n$ and $\kp$ are as in equation \eqref{eqn:kp}.
Note that $D$ depends only on $n$.

We now define $K$ as follows.  
Cover the $(n-2)$-skeleton $\T^{(n-2)}$ of $\T$ with a finite number of open $\frac{1}{8} D$-balls (in the $g_s$ metric).
Then extend this cover to an open cover of $\T^{(n-1)}$ by open $\frac{1}{8} D$-balls; 
denote by $\{p_1, \ldots , p_N\}$ the points where these balls are centered.  
$K$ is then the maximal number of open sets in this covering required to cover the boundary of any simplex of $\T$.
Note that, since the Riemannian manifold $(M,g)$ converges to the geodesic metric space $(M,g_s)$ in the Gromov-Hausdorff sense 
(as $\d$ approaches $0$), we can choose $\d > 0$ sufficiently small so that the collection of $g$-metric open $\frac{1}{8} D$-balls, centered at the same collection of points $\{p_1, \ldots ,p_N\}$, also forms a 
cover of $\T^{(n-1)}$ -- call this open cover $\mathcal U$.
Also note that there is no ambiguity with equation \eqref{eqn:delta choice}, as $K$ is fixed and then we choose $\d$. 

\begin{rem}
Let $U, V \in \mathcal U$ be such that $U \cap V \neq \emptyset$.  
Then by the above construction, $\text{diam}_g(U \cup V) \leq \frac{1}{2} D$.  
Let $\s \in \T$ be a simplex that intersects both $U$ and $V$.  
Let $x, y \in \s \cap U \cap V$, and let $\g$ be a $g$-geodesic joining $x$ and $y$ (so, in particular, $\ell_g(\g) \leq \frac{1}{2} D$).
A priori, $\g$ could weave in and out of $\s$.  
But by choosing $\d$ small enough, we can ensure that any such points $x$ and $y$ can be connected by a path of length less than $D$ which is a geodesic of $g$ restricted to $\s$.  
\end{rem}

\begin{proof}[Proof of Theorem \ref{thm:riemannian-to-discrete}]
Let $\g$ denote a closed $g$-polygonal path, and let $K$ be as above.
Suppose that $\g$ is not null-homotopic, but does not satisfy condition (2) of Lemma \ref{lem:g-polygonal paths}.
Let us assume that $\s \in \T$ is the only simplex for which $\g \cap \s$ consists of more than $K$ connected components, and 
that $\g \cap \s$ has exactly $K+1$ components. 
The procedure described below can be iterated (see Step 4 below) to deal with multiple simplices and/or for a greater intersection number with any simplex.

Fix a base point and orientation of $\mathbb{S}^1$.  
Using this orientation, each component of $\g \cap \s$ has an ``entrance point'' $x_i$ and an ``exit point'' $y_i$.
Let $x_1, \hdots, x_{K+1}$ denote the $K+1$ entrance points, and let $y_1, \hdots, y_{K+1}$ denote the $K+1$ exit points.
By the definition of $K$, there must exist two entrance points $x_i$ and $x_j$ such that
	\begin{equation}\label{eqn:entrance distance}
	d_g (x_i,x_j) < D 	\quad		\overset{\text{Eqn} \eqref{eqn:D}}{\Longrightarrow}		\quad		\kp d_g (x_i,x_j) < e_n
	\end{equation}
Let $y_i$ and $y_j$ denote the corresponding exit points, and assume that $i < j$.

Remove the segments $(x_i,y_i)$ and $(x_j,y_j)$ from $\g$, 
and insert $g$-geodesics $(x_i,y_j)$ and $(x_j,y_i)$ interior to $\s$.  
By the above Remark, we know that these two segments have length less than $D$.
Denote the closed $g$-polygonal path containing $(x_i,y_j)$ by $\g_1$, and the other by $\g_2$.
We may assume that neither path is null-homotopic, for otherwise we could have homotoped the original path interior to $\s$
and reduced the number of components of $\g \cap \s$.  
So by Lemma \ref{lem:g-polygonal paths}, the Corollary, and equation \eqref{eqn:kp} there exist
closed edge paths $p_1$ and $p_2$ freely homotopic to $\g_1$ and $\g_2$ such that
	\begin{equation}\label{eqn:basic inequalities}
	3e_n \ell_d(p_1) \leq \kp \ell_g(\g_1) \quad \text{and} \quad 3e_n \ell_d(p_2) \leq \kp \ell_g(\g_2).
	\end{equation}

Now under the free homotopy of $\gamma_1$ into the edge path $p_1$, the points $x_i$, $y_j$
find themselves lying on the $1$-skeleton of $\sigma$. Let $\vxi$ and $\vyj$ be the vertices of
$\sigma$ which are closest to the image of $x_i, y_j$ respectively. Similarly, the free homotopy of $\gamma_2$ into the edge
path $p_2$ moves $x_j, y_i$ into the $1$-skeleton of $\sigma$, and we let $\vxj, \vyi$ be the vertices of $\sigma$ closest
to these image points.
By possibly shortening the paths if necessary, we may assume that the edge $\overline{\vxi \vyj} \in p_1$ 
and the edge $\overline{\vxj \vyi} \in p_2$. Note that it is entirely possible that one (or both) of these edges is
degenerate.

We want to simultaneously 
	\begin{itemize}
	\item use $p_1$ and $p_2$ to construct a closed path $p$ that is freely homotopic to $\g$.
	\item Reconstruct $\g$ from $\g_1$ and $\g_2$.
	\item Preserve the key inequality $\ds{ \ell_d(p) \leq \kp \ell_g(\g) }$.
	\item Do all of this in a manner which can be iterated.
	\end{itemize}
We will do this in four steps. In steps 2 and 3, we will need to assume that the edges  $\overline{\vxi \vyj},
\overline{\vxj \vyi}$ are non-degenerate. However, in step 4,  we will explain how to allow for degenerate edges.

\vskip 10pt

\begin{figure}\label{Step 1}
\begin{tikzpicture}[scale=0.9]

\draw (0,2)node{$\underline{p^{\prime \prime}}$};
\draw[blue] (-1,1) -- (-1,-1);
\draw (1,1) -- (1,-1);
\draw[red] (-1,1) -- (1,1);
\draw[red] (-1,-1) -- (1,-1);
\draw[fill=black!] (1,1) circle (0.3ex);
\draw (1,1)node[above]{$v_j$};
\draw[fill=black!] (1,-1) circle (0.3ex);
\draw (1,-1)node[below]{$w_i$};
\draw[fill=black!] (-1,-1) circle (0.3ex);
\draw (-1,-1)node[below]{$v_i$};
\draw[fill=black!] (-1,1) circle (0.3ex);
\draw (-1,1)node[above]{$w_j$};
\draw[dashed] (-1,1) to [out=180,in=90] (-3,0);
\draw[dashed] (-3,0) to [out=270,in=180] (-1,-1);
\draw(-3,0)node[right]{$p_1$};
\draw[dashed] (1,-1) to [out=0,in=270] (3,0);
\draw[dashed] (1,1) to [out=0,in=90] (3,0);
\draw(3,0)node[left]{$p_2$};

\draw (8,2)node{$\underline{\g^{\prime \prime}}$};
\draw[dashed] (7,1) to [out=180,in=90] (5,0);
\draw[dashed] (7,-1) to [out=180,in=270] (5,0);
\draw[fill=black!] (7,1) circle (0.3ex);
\draw (7,1)node[above]{$y_j$};
\draw[fill=black!] (7,-1) circle (0.3ex);
\draw (7,-1)node[below]{$x_i$};
\draw (7,1) to [out=280,in=80] (7,-1);
\draw(5,0)node[right]{$\g_1$};

\draw[dashed] (9,1) to [out=0,in=90] (11,0);
\draw[dashed] (9,-1) to [out=0,in=270] (11,0);
\draw[fill=black!] (9,1) circle (0.3ex);
\draw (9,1)node[above]{$x_j$};
\draw[fill=black!] (9,-1) circle (0.3ex);
\draw (9,-1)node[below]{$y_i$};
\draw[blue] (9,1) to [out=260,in=100] (9,-1);
\draw(11,0)node[left]{$\g_2$};

\end{tikzpicture}
\caption{Schematic picture for Step 1.  In red is what was added in this step, and in blue is what will be deleted in Step 2.
Note that this picture is just to keep track of what is going on.  It is not geometrically accurate, as $x_i$ is close to $x_j$ in the proof.  }
\end{figure}
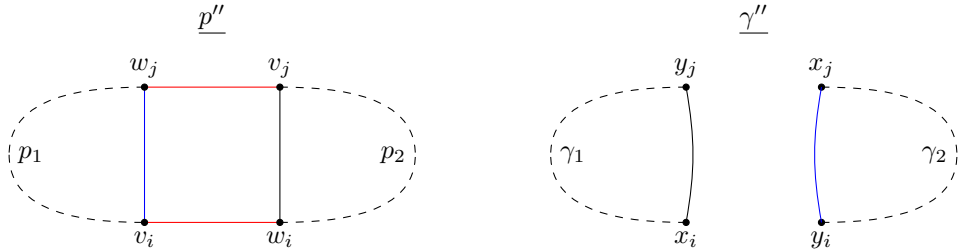

\noindent {\bf Step 1:  Append edges $\overline{\vxi \vyi}$ and $\overline{\vxj \vyj}$ between $p_1$ and $p_2$.}

\vskip 5pt

Let $p^{\prime \prime} := p_1 \cup p_2 \cup \overline{\vxi \vyi} \cup \overline{\vxj \vyj}$, and let $\g^{\prime \prime} := \g_1 \cup \g_2$ (see Figure 4  below for a schematic illustration). Now let us check that the analogue of the key inequality still holds. This is a consequence of the following series of inequalities:
	\begin{align*}
	2e_n \ell_d(p^{\prime \prime}) &\leq 2e_n (\ell_d(p_1) + \ell_d(p_2) + 2)  \\
	&= 2e_n (\ell_d(p_1) + 1) + 2e_n (\ell_d(p_2)+1)  \\
	&\leq 2e_n \left( \ell_d(p_1) + \frac{1}{3} \ell_d(p_1) \right) + 2e_n \left( \ell_d(p_2) + \frac{1}{3} \ell_d(p_2) \right)  \\
	&< 3e_n \ell_d(p_1) + 3e_n \ell_d(p_2)  \\
	&\leq \kp \ell_g(\g_1) + \kp \ell_g(\g_2) \\
	& =  \kp \ell_g(\g^{\prime \prime}).
	\end{align*}
	
The second inequality holds because both $p_1$ and $p_2$ are not null-homotopic, so must consist of
at least three edges. The last inequality is due to the ``3'' present in equation \eqref{eqn:basic inequalities}. Note that,
in the event that one of the edges $\overline{\vxi \vyi}, \overline{\vxj \vyj}$ is degenerate, this series of inequalities 
still holds (the only effect is that the first inequality in the chain becomes strict).

\vskip 10pt

\noindent {\bf Step 2:  Insert $(x_i,y_i)$ and remove $(x_j,y_i)$ from $\g^{\prime \prime}$, 
and remove the edge $\overline{\vxi \vyj}$ from $p^{\prime \prime}$.}

\vskip 5pt

Let $p^\prime$ and $\g^\prime$ denote the new sets created from the above procedures (see Figure 5 below).  
Notice that, by the triangle inequality and equation \eqref{eqn:entrance distance} we get
	\begin{equation*}
	d_g(x_j,y_i) \leq d_g(x_i,y_i) + d_g(x_i,x_j)  
	\end{equation*}
	\begin{equation}
	\Longrightarrow \quad -D \leq -d_g(x_i,x_j) \leq d_g(x_i,y_i) - d_g (x_j,y_i).  \label{eqn:3}
	\end{equation}
Then by equations \eqref{eqn:3} and \eqref{eqn:D} we obtain
	\begin{equation}\label{eqn:edge deletion}
	2 e_n \ell_d(p^\prime) = 2 e_n \ell_d(p^{\prime \prime}) -2e_n
	\end{equation}
	\begin{equation*}
	\kp \ell(\g^\prime) = \kp [\ell(\g^{\prime \prime}) + d_g(x_i,y_i) - d_g(x_j,y_i)] 
	\geq \kp [\ell(\g^{\prime \prime}) - D] = \kp \ell(\g^{\prime \prime}) - e_n
	\end{equation*}
and so
	\begin{equation}\label{eqn:4}
	2e_n \ell_d(p^\prime) = 2e_n \ell_d(p^{\prime \prime}) - 2e_n \leq \kp \ell_g(\g^{\prime \prime}) - e_n \leq \kp \ell_g(\g^\prime).
	\end{equation}

Notice that here, it is important that the edge $\overline{\vxi \vyj}$ is non-degenerate. If
it were degenerate, equation (\ref{eqn:edge deletion}) would not hold, and thus neither would                                                                                                                                                                                                                                                                                                                                                                              (\ref{eqn:4}).

\begin{figure}\label{Step 2}
\begin{tikzpicture}[scale=0.9]

\draw (0,2)node{$\underline{p^{\prime}}$};
\draw[blue] (1,1) -- (1,-1);
\draw (-1,1) -- (1,1);
\draw (-1,-1) -- (1,-1);
\draw[fill=black!] (1,1) circle (0.3ex);
\draw (1,1)node[above]{$v_j$};
\draw[fill=black!] (1,-1) circle (0.3ex);
\draw (1,-1)node[below]{$w_i$};
\draw[fill=black!] (-1,-1) circle (0.3ex);
\draw (-1,-1)node[below]{$v_i$};
\draw[fill=black!] (-1,1) circle (0.3ex);
\draw (-1,1)node[above]{$w_j$};
\draw[dashed] (-1,1) to [out=180,in=90] (-3,0);
\draw[dashed] (-3,0) to [out=270,in=180] (-1,-1);
\draw(-3,0)node[right]{$p_1$};
\draw[dashed] (1,-1) to [out=0,in=270] (3,0);
\draw[dashed] (1,1) to [out=0,in=90] (3,0);
\draw(3,0)node[left]{$p_2$};

\draw (8,2)node{$\underline{\g^{\prime}}$};
\draw[dashed] (7,1) to [out=180,in=90] (5,0);
\draw[dashed] (7,-1) to [out=180,in=270] (5,0);
\draw[fill=black!] (7,1) circle (0.3ex);
\draw (7,1)node[above]{$y_j$};
\draw[fill=black!] (7,-1) circle (0.3ex);
\draw (7,-1)node[below]{$x_i$};
\draw[blue] (7,1) to [out=280,in=80] (7,-1);
\draw(5,0)node[right]{$\g_1$};

\draw[dashed] (9,1) to [out=0,in=90] (11,0);
\draw[dashed] (9,-1) to [out=0,in=270] (11,0);
\draw[fill=black!] (9,1) circle (0.3ex);
\draw (9,1)node[above]{$x_j$};
\draw[fill=black!] (9,-1) circle (0.3ex);
\draw (9,-1)node[below]{$y_i$};
\draw[red] (7,-1) to [out=10,in=170] (9,-1);
\draw(11,0)node[left]{$\g_2$};

\end{tikzpicture}
\caption{Schematic picture for Step 2.  Again, what is new is in red, and what will be removed in Step 3 is in blue.}
\end{figure}
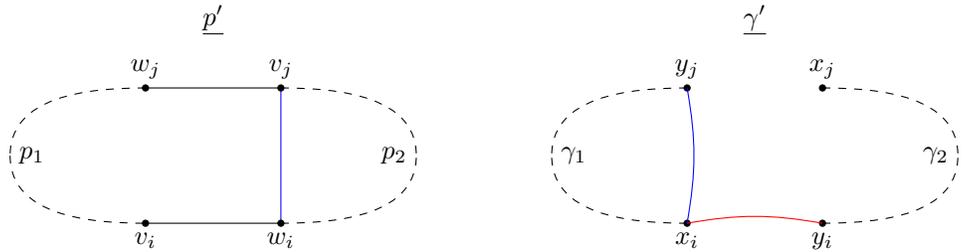

\vskip 10pt

\noindent {\bf Step 3:  Insert $(x_j,y_j)$ and remove $(x_i,y_j)$ from $\g^{\prime}$, 
and remove the edge $\overline{\vxj \vyi}$ from $p^{\prime}$.}

\vskip 5pt

The first two operations will return our original path $\g$, and the second will provide a closed edge path $p$.
The path $p$ is freely homotopic to $\g$ since all of our operations occurred within the closed simplex $\s$.
By the triangle inequality, we have that
	\begin{equation*}
	d_g(x_i,y_j) \leq d_g(x_j,y_j) + d_g(x_i,x_j)
	\end{equation*}
and the exact same argument as for equation \eqref{eqn:4} proves that
	\begin{equation}\label{eqn:5}
	\ell_d(p) < 2e_n \ell_d(p) \leq \kp \ell(\g)
	\end{equation}

Again, in this step, one needs the edge $\overline{\vxj \vyi}$ to be non-degenerate.
	
\vskip 10pt

\noindent {\bf Step 4:  A few remarks to ensure that this process iterates.}

\vskip 5pt

The ``3'' in equation \eqref{eqn:kp} means that, at each vertex of both $p_1$ and $p_2$, 
we can append two edges to obtain new sets $p_1^\prime$ and $p_2^\prime$ which still satisfy that
$\ell_d(p_1^\prime) \leq \kp \ell_g(\g_1)$ and $\ell_d(p_2^\prime) \leq \kp \ell_g(\g_2)$
	
The reason that we need this multiple of three
is because any of the vertices $\vxi$, $\vxj$, $\vyi$, and/or $\vyj$ could
be the same. As already mentioned in
Steps 2 and 3, the proofs for inequalities \eqref{eqn:4} and \eqref{eqn:5} do not hold 
without inequality \eqref{eqn:edge deletion}. But for this inequality to hold, we must have an edge to delete.  
This edge may not exist if $\vxi = \vyj$ and/or $\vxj = \vyi$, and
so we may need an additional edge built in for these steps
as sort of an ``extraneous edge'' that we can delete in order to preserve inequality \eqref{eqn:edge deletion}.
	
Since the ``3'' in equation \eqref{eqn:kp} is multiplicative, we can glue in these two additional edges
at {\it every} vertex.  Thus, we always have these edges available to us wherever we cut $\g$ into two closed 
curves $\g_1$ and $\g_2$.

\end{proof}

\section{Filling triangulated surfaces}\label{section:applications}

Recall that, given a closed triangulated $n$-dimensional manifold $(M, \T_M)$, a \emph{filling} of $M$ is a triangulated $(n+1)$-dimensional manifold $(N,\T_N)$ with $\partial N = M$ and $\T_N |_{\partial N} = \T_M$.  A basic question is the following.  Given a triangulated manifold $(M, \T_M)$, does such a filling exist and, if so, can you bound $|\T_N|$, the number of facets of such a filling?  Theorem \ref{thm:riemannian-to-discrete} leads to the following two solutions to this question in the case when $n=2$.

\begin{thm}\label{lineargenus}
  Let $(M, \T_M)$ be a triangulated surface of genus $\leq g$. 
  Then there exists a filling $(N,\T_N)$ satisfying that
  \begin{equation*}
    |\T_N| \leq C_g |\T_M|,
  \end{equation*}
  where $C_g$ depends only on $g$, and not on the particular surface or triangulation.
\end{thm}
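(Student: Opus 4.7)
The plan is to induct on the genus $g$, using the combinatorial systolic inequality for essential surfaces (Corollary \ref{cor:essential}) as the main engine. The base case $g = 0$ is immediate: any triangulated $2$-sphere $(M, \T_M)$ with $n$ triangles is filled by the $3$-ball obtained by adding a single interior vertex $v$ and coning every triangle of $\T_M$ to $v$, producing exactly $n$ tetrahedra, so we may take $C_0 = 1$.

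For the inductive step, assume the result for all bounds less than $g$, and let $(M, \T_M)$ be a closed orientable triangulated surface of genus $g \geq 1$ with $n := |\T_M|$. Since $M$ is essential, Corollary \ref{cor:essential} produces a homotopically non-trivial edge loop $\gamma$ in $\T_M^{(1)}$ of discrete length $L := l_{\T_M}(\gamma) \leq C \sqrt{n}$, where $C$ depends only on the dimension $n = 2$. A standard argument (any repeated vertex would split $\gamma$ into two sub-loops of strictly smaller length, one of which must remain non-contractible) shows that a shortest such $\gamma$ is embedded. Cut $M$ along $\gamma$ and cap each of the (one or two) resulting boundary circles with a triangulated disk obtained by coning to a fresh vertex: this adds at most $2L$ new triangles and yields a closed triangulated surface $M'$, which is either connected of genus $g-1$ (if $\gamma$ is non-separating) or splits into two components of genera $g_1, g_2 \geq 1$ with $g_1 + g_2 = g$ (if $\gamma$ is separating). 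In either case the inductive hypothesis applied to each component furnishes a filling $(N', \T_{N'})$ of $M'$ with $|\T_{N'}| \leq C_{g-1}(n + 2L)$.

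To recover a filling of $M$, reverse the capping: the two capping disks $D_1, D_2 \subset \partial N'$ carry identical cone triangulations with $L$ triangles apiece, so we attach a triangulated prism $D^2 \times [0,1]$ onto $N'$ along $D_1 \sqcup D_2 \cong D^2 \times \{0,1\}$, triangulated via the standard three-tetrahedron decomposition of each triangular prism $T \times [0,1]$. This adds at most $3L$ tetrahedra and produces a compact $3$-manifold $N$ with $\partial N = M$ and $\T_N|_{\partial N} = \T_M$. Consequently
\[
|\T_N| \leq C_{g-1}(n + 2L) + 3L \leq C_{g-1}\, n + (2C C_{g-1} + 3C)\sqrt{n} \leq C_g\, n
\]
for $C_g := C_{g-1} + 2C C_{g-1} + 3C$, valid once $n$ exceeds a threshold $n_0$ (using $\sqrt{n} \leq n$). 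The finitely many triangulated genus-$g$ surfaces with $n \leq n_0$ are each fillable, since every closed orientable surface bounds a handlebody which may be triangulated compatibly with any prescribed boundary triangulation; enlarge $C_g$ as needed to absorb them.

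The principal obstacle is the combinatorial bookkeeping of the cut-cap-reattach step: one must verify that cutting along an embedded edge loop, capping the new boundary circles by cones, and prism-triangulating the reattachment all stay within the triangulated category, possibly passing through $\Delta$-complex intermediates and subdividing to recover an honest simplicial structure, without inflating facet counts beyond what is claimed. The argument closes only because the combinatorial systolic bound supplies a loop of length $O(\sqrt{n})$, so the surgery introduces only a lower-order correction and the induction on genus accumulates at most a $C_g$-fold multiplicative factor.
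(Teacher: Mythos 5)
Your overall strategy is recognizably the paper's cut-and-cone argument, reorganized as an induction on genus: find a short homotopically non-trivial embedded edge loop via the combinatorial systolic inequality, cut along it, cone off the two boundary circles, handle the lower-genus surface, and then undo the cut. (The paper instead iterates the cut-and-cone all the way down to a sphere, using the \emph{homological} systole so every cut is non-separating and the surface stays connected, fills the final sphere once, and reglues; your case split for separating curves is an acceptable substitute when the constant is allowed to depend on $g$, and your quantitative bookkeeping is fine -- indeed $\sqrt{n}\leq n$ for all $n\geq 1$, so no threshold $n_0$ is needed.) The difficulty is concentrated in your reattachment step, and as written it does not work.

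Attaching the triangulated prism $D^2\times[0,1]$ to $N'$ along $D_1\sqcup D_2$ does not produce a filling of $(M,\T_M)$. After the attachment, the interiors of the two cap disks become interior to $N$, but the \emph{lateral annulus} $\partial D^2\times[0,1]$ of the prism lies on the boundary: $\partial N$ is the cut surface with an annulus of $2L$ new triangles inserted along $\gamma$. This is homeomorphic to $M$, but its triangulation is not $\T_M$, so the defining condition $\T_N|_{\partial N}=\T_M$ of a filling fails. The correct reversal is to identify $D_1$ with $D_2$ directly (both are cones on the same $L$-cycle), which adds no simplices and restores the boundary triangulation exactly; but then one must check that the quotient is still a simplicial complex, and your inductive hypothesis gives no control over how $N'$ sits near $D_1\cup D_2$ -- a tetrahedron of $N'$ could meet both disks, or two tetrahedra could come to share a face together with the opposite vertex after the identification (in the non-separating case this is a self-gluing of $N'$, which is exactly where such coincidences arise). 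This is precisely the issue the paper's ``modified coning'' addresses: it pads the final cone with a prism shell $\T_{(g)}\times I$ (triangulated \`a la Hatcher) \emph{inside} the $3$-ball, so that when the cap disks are identified in reverse order all identifications happen among shell simplices and the result is a genuine simplicial complex, while the boundary triangulation is untouched. Your proposed escape hatch of passing through $\Delta$-complexes and subdividing does not obviously repair this, since any subdivision touching the boundary again destroys $\T_N|_{\partial N}=\T_M$. Until the regluing is arranged so that the boundary is literally $(M,\T_M)$ and the resulting complex is simplicial (for instance by building a collar/shell into the filling before identifying the cap disks), the induction does not close.
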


\begin{thm}\label{quadratic}
  Let $(M,\T_M)$ be a triangulated surface. Then there exists $(N, \T_N)$, a filling of $M$, so that
  \begin{equation*}
    |\T_N| \leq C |\T_M| \left( \log |\T_M| \right)^2,
  \end{equation*}
  where $C$ does not depend on the particular surface or triangulation.
\end{thm}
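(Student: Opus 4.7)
The plan is to build the filling by iteratively gluing triangulated $2$-handles along short non-contractible edge loops, using our Main Theorem combined with Gromov's sharp systolic inequality for high-genus surfaces. Gromov's asymptotic estimate $\sys_g(M)^2 \le C_0 \vol_g(M)(\log g)^2/g$ for closed orientable surfaces of genus $g \ge 2$, together with the Main Theorem applied to the class of genus-$g$ surfaces, furnishes a universal constant $C_1$ such that every triangulated orientable surface $(M,\T)$ of genus $g\ge 2$ contains a non-contractible edge loop in $\T^{(1)}$ of discrete length at most $C_1 \sqrt{|\T|}\log(g)/\sqrt{g}$.

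The reduction step is the following. Given a short non-contractible edge loop $\gamma$ of length $\ell$ in the current boundary $(M_k,\T_k)$, cone $\gamma$ to a new vertex to produce a triangulated disk $D$ with $\ell$ triangles, and attach the thickening $D\times[0,1]$ --- triangulated by $O(\ell)$ tetrahedra --- to the evolving filling. The new boundary $(M_{k+1},\T_{k+1})$ is obtained from $(M_k,\T_k)$ by surgery along $\gamma$ and satisfies $|\T_{k+1}| \le |\T_k| + 2\ell$; either the genus of the affected component drops by one (non-separating case), or that component splits into two pieces of strictly smaller genus (separating case). Iterate on every surviving component; after $O(g)$ reductions the boundary is a disjoint union of triangulated $2$-spheres, each of which is capped by a triangulated $3$-ball via coning to an interior vertex, at cost $O(T')$ per sphere of $T'$ triangles.

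To bound the total tetrahedron count, let $T_k$ denote the aggregate triangle count across all surviving boundary components and $g_k$ the maximal genus among them. The recurrence $T_{k+1} \le T_k + 2\ell_k$ together with $\ell_k \le C_1\sqrt{T_k}\log(g_k+1)/\sqrt{g_k+1}$ and the substitution $y_k := \sqrt{T_k}$ telescopes, via the standard estimate $\sum_{j=1}^g \log(j)/\sqrt{j} = O(\sqrt{g}\log g)$, to yield $y_k \le y_0 + O(\sqrt{g}\log g)$, hence $T_k = O(|\T_M| + g\log^2 g)$. Since $g \le |\T_M|/4 + 1$ by Euler's formula, we obtain $T_k = O(|\T_M|(\log|\T_M|)^2)$ throughout the iteration, and a parallel estimate gives $\sum_k \ell_k = O(|\T_M|(\log|\T_M|)^2)$. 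The total number of tetrahedra used is $T_{\mathrm{final}} + O(\sum_k \ell_k)$, which satisfies the claimed bound.

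The main obstacle is the bookkeeping forced by separating cuts, which branch the iteration into several simultaneous components. This is handled by applying the systolic estimate to the component of largest genus at each stage and tracking aggregate counts $T_k$; since each separating cut replaces one component by two of strictly smaller genus (preserving the sum of genera but forcing further reductions on both pieces), the total number of surgeries remains $O(g)$ and the sum $\sum_k \log(g_k+1)/\sqrt{g_k+1}$ is dominated by the corresponding sum for a single monotone reduction $g \to g-1 \to \cdots \to 0$. An equivalent route is to first upgrade Theorem \ref{lineargenus} to show $C_g = O((\log g)^2)$ by essentially the same argument, after which Theorem \ref{quadratic} follows from the Euler-characteristic bound $g \le |\T_M|$.
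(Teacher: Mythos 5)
Your overall strategy (transfer Gromov's $\log g/\sqrt g$ systolic bound to the combinatorial setting via Theorem \ref{thm:riemannian-to-discrete}, cut along a short essential edge loop, cap, iterate, and finish with the Euler-characteristic bound $g\leq |\T_M|/4$) is the same as the paper's cut-and-cone argument. However, there is a genuine gap in your bookkeeping for separating curves, and it stems from working with the homotopy systole rather than the homological one. A simple non-contractible loop can be separating, and your claim that the branched sum $\sum_k \log(g_k+1)/\sqrt{g_k+1}$ is ``dominated by the corresponding sum for a single monotone reduction $g\to g-1\to\cdots\to 0$'' is false: if the cuts first split the surface into roughly $g$ genus-one components, then the $g$ non-separating cuts all occur at genus $1$ and contribute $\Theta(g)$, whereas the monotone sum is only $O(\sqrt g\,\log g)$. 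Since you also replaced the per-component triangle count by the aggregate $T_k$ inside the loop-length bound, your recurrence then only yields $\sqrt{T}\leq\sqrt{|\T_M|}+O(g)$, i.e.\ $T=O(|\T_M|+g^2)=O(|\T_M|^2)$, which does not give the theorem. The paper avoids this branching entirely: it cuts along a short \emph{homologically} nontrivial simple loop (Corollary \ref{combhsys}, coming from Theorem 2.C of \cite{Gromov2}), which is automatically non-separating, so the surface stays connected and the genus drops by exactly one at every step until one reaches a torus (where Corollary \ref{combsys} is used) and then a sphere. Your argument could be repaired either by switching to the homological systole as in the paper, or by per-component accounting (e.g.\ a Cauchy--Schwarz estimate $\sum_c\sqrt{T^{(c)}}\leq\sqrt{N\sum_c T^{(c)}}$ together with the bound on the number of components), but as written the key estimate does not follow.

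A secondary issue is the construction of the filling itself. Attaching a thickened disk $D\times[0,1]$ ``to the evolving filling'' along a curve in the boundary surface is not yet a well-defined triangulated $3$-manifold construction: one must specify the attaching region as a subcomplex, check triangulation compatibility there, and verify both that the result is a simplicial complex and that it restricts to the original $\T_M$ on $M$. The paper sidesteps these issues by doing all cutting and coning at the level of surfaces, building a single triangulated $3$-ball at the very end (inserting a Hatcher-style prism layer precisely so that the subsequent identifications do not create two tetrahedra sharing the cone point and the opposite face), and then obtaining $(N,\T_N)$ as a quotient of that ball by regluing along the cuts, so that $|\T_N|$ is just the tetrahedron count of the ball. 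Your plain coning of the final spheres and your handle attachments would need analogous care before the count $|\T_N|\leq C|\T_M|(\log|\T_M|)^2$ can be asserted for an honest filling in the sense defined in Section \ref{section:background}.
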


The proofs of both of these theorems are very similar. We first combine Theorem \ref{thm:riemannian-to-discrete} with results of Gromov in \cite{Gromov1} and \cite{Gromov2} to bound the discrete systole of $(M,\T_M)$ by a factor of the combinatorial volume of $(M, \T_M)$.  Then the main idea is to apply a ``cut-and-cone" procedure.  We begin this procedure by 
cutting the surface along a short homologically nontrivial edge loop. This will 
yield a surface of smaller genus with two boundary components. We then cone off 
the boundary components to get a surface of genus one less than the 
original surface (See Figure \ref{cac}). We iterate this procedure until the 
surface is a 2-sphere, in which case we perform a modified coning-off procedure to 
get a triangulated 3-ball. By gluing the 3-ball along all of the cuts in the 
reverse order, we obtain a triangulated 3-manifold with the desired properties.

\begin{rem}
The argument for our proofs ``builds'' the bounding $3$-manifold from the triangulation on $\Sigma$. One might wonder whether
this is really necessary. Indeed, if one takes the genus $g$ handlebody $H_g$ embedded in $\mathbb R^3$, any triangulation of the
boundary surface $\Sigma_g$ can be extended in to a triangulation of $H_g$. The following Lemma shows that $H_g$ is in general not the best bounding surface for $\Sigma_g$.
\end{rem}

\begin{lem}\label{bad-triangulations}
One can construct a sequence of triangulations $\mathcal T_i$ of the boundary $\Sigma_g$ with a {\it fixed} number of triangles $|\mathcal T_i|\leq 24g$, and with the property that any extension to a triangulation $\widehat{\mathcal T_i}$ of $H_g$ satisfies $|\widehat{\mathcal T_i}|\to \infty$.
\end{lem}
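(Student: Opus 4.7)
The plan is to exploit the fact that, for $g\geq 2$, the handlebody subgroup $\mathcal H\subset \mathrm{MCG}(\Sigma_g)$---namely, the image of the restriction map $\mathrm{MCG}(H_g)\to \mathrm{MCG}(\Sigma_g)$---has \emph{infinite index}. (For $g=1$ the statement actually fails: both the meridian and longitude Dehn twists extend to $H_1=D^2\times S^1$, so $\mathcal H$ is all of $\mathrm{MCG}(T^2)$, and any boundary triangulation pulls back through a self-homeomorphism of $H_1$ to a triangulation of uniformly bounded size. Hence one assumes $g\geq 2$.) Fix a reference triangulation $\T_0$ of $\Sigma_g$ with at most $24g$ triangles; for instance, triangulating the standard $4g$-gon identification polygon and performing a bounded number of barycentric subdivisions to obtain a genuine simplicial complex suffices, since the Euler relation $F=2V+4g-4$ gives $F=O(g)$. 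Let $S\subset \mathrm{MCG}(\Sigma_g)$ be the finite subgroup of simplicial self-automorphisms of $\T_0$.

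Next, I would inductively build a sequence $\phi_1,\phi_2,\ldots\in \mathrm{MCG}(\Sigma_g)$ satisfying
\[
\phi_j\, s\, \phi_i^{-1}\notin \mathcal H\qquad \text{for all } i\neq j \text{ and all } s\in S.
\]
At step $k$ the forbidden locus $\bigcup_{i<k,\,s\in S}\mathcal H\cdot \phi_i\cdot s^{-1}$ meets only finitely many left cosets of $\mathcal H$, so the infinite-index hypothesis supplies a valid $\phi_k$. Picking PL-homeomorphism representatives $f_i$ of $\phi_i$ and setting $\T_i:=f_i(\T_0)$, one has $|\T_i|=|\T_0|\leq 24g$ for every $i$.

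Suppose, toward a contradiction, that some fixed $N$ admits infinitely many extensions $\widehat{\T_i}$ with $|\widehat{\T_i}|\leq N$. Since there are only finitely many abstract simplicial $3$-complexes with at most $N$ top-dimensional simplices, the pigeonhole principle yields indices $i\neq j$ and a simplicial isomorphism $\Psi\colon \widehat{\T_i}\to \widehat{\T_j}$. Realised on underlying spaces, $\Psi$ is a PL self-homeomorphism of $H_g$ whose boundary restriction sends $\T_i$ to $\T_j$ simplicially. Up to isotopy this boundary restriction is $f_j\circ s\circ f_i^{-1}$ for some $s\in S$; on the other hand, being the boundary value of a self-homeomorphism of $H_g$, its mapping class lies in $\mathcal H$. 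This forces $\phi_j\, s\,\phi_i^{-1}\in \mathcal H$, contradicting the choice of the sequence. Therefore $|\widehat{\T_i}|\to\infty$.

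The main obstacle is justifying the infinite index of $\mathcal H$ in $\mathrm{MCG}(\Sigma_g)$ for $g\geq 2$, a classical fact that one can establish either via the action on the disk complex of $H_g$, or more elementarily by inspecting the induced representation on $H_1(\Sigma_g;\mathbb Q)$: this factors through the stabiliser of the Lagrangian $\ker(H_1(\Sigma_g;\mathbb Q)\to H_1(H_g;\mathbb Q))$, which is a proper parabolic subgroup of $\mathrm{Sp}_{2g}(\mathbb Z)$ of infinite index as soon as $g\geq 2$. All remaining steps amount to a clean pigeonhole argument together with the dictionary between simplicial isomorphisms of triangulations and elements of the relevant mapping class groups.
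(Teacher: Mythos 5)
Your argument is correct in its core, and it takes a genuinely different (more group-theoretic) route than the paper. The paper also starts from the pigeonhole observation that there are only finitely many $3$-complexes built from a bounded number of tetrahedra, but then tracks a concrete invariant: the classes in $\pi_1(H_g)\cong F_g$ of embedded curves in the $1$-skeleton of the boundary, which are constrained for bounded-size fillings, while pushing a fixed triangulation forward under powers of a single Dehn twist along a curve nontrivial in $\pi_1(H_g)$ makes these classes unbounded. You instead promote to the proof exactly the fact that the paper only records in the remark \emph{after} the lemma, namely that the handlebody subgroup $\mathcal H$, the image of $MCG(H_g)\to MCG(\Sigma_g)$, has infinite index, and you combine it with a coset-avoidance choice of mapping classes $\phi_i$ and the finite group $S$ of simplicial automorphisms of the base triangulation, so that a simplicial isomorphism between two bounded fillings forces $\phi_j s\phi_i^{-1}\in\mathcal H$. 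This buys a cleaner logical structure (no need to discuss embedded curves in $1$-skeleta or how their $\pi_1$-images grow), and your justification of the infinite-index fact via the stabilizer of the Lagrangian $\ker\bigl(H_1(\Sigma_g;\mathbb{Q})\to H_1(H_g;\mathbb{Q})\bigr)$ in $Sp_{2g}(\mathbb{Z})$ is legitimate; the paper's version is more explicit and self-contained (powers of one Dehn twist suffice, and no mapping-class-group input is quoted).

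Two points should be repaired. First, your parenthetical about $g=1$ is false: the Dehn twist about the longitude does \emph{not} extend to the solid torus, since on $H_1(T^2)$ it sends the meridian class $m$ to $m\pm l$ and hence does not preserve $\ker\bigl(H_1(T^2)\to H_1(H_1)\bigr)=\langle m\rangle$; the boundary restrictions of self-homeomorphisms of $H_1$ stabilize $\pm m$, a subgroup of infinite index in $MCG(T^2)$. So the lemma does not fail for $g=1$, and in fact your argument applies there verbatim once you drop the restriction to $g\geq 2$ (the Lagrangian-stabilizer argument gives infinite index for every $g\geq 1$). Second, the statement asks for at most $24g$ triangles, and your sketch only delivers $O(g)$: coning the $4g$-gon is not a simplicial complex, and two barycentric subdivisions already give $144g$ triangles, so as written the base triangulation is not shown to meet the stated bound (a minor quantitative point, since any fixed linear-in-$g$ triangulation carries the argument, but it should be acknowledged or replaced by an explicit small triangulation). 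The remaining details are fine: a simplicial isomorphism of fillings preserves the boundary subcomplex, a homeomorphism carrying $\mathcal T_0$ to itself simplicially is isotopic to the realization of its vertex automorphism, and the coset-avoidance step works because finitely many cosets of an infinite-index subgroup cannot cover the group (the symmetric case $j<i$ reduces to $i<j$ by replacing $s$ with $s^{-1}$).
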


\begin{proof}
By way of contradiction, let us assume that there is a universal upper bound $|\widehat{\mathcal T_i}|\leq K$. Since there are only finitely many $3$-complexes that can be built from $K$ tetrahedra, one can look at the finitely many such complexes that are homeomorphic to $H_g$. For each of these, there are finitely many embedded curves in the $1$-skeleton of the boundary, and for each of these curves, we can look at the image in $\pi_1(H_g)\cong F_g$. Thus, if one has such a universal upper bound, we get a finite collection of elements in $\pi_1(H_g)$ which carry all possible embedded curves in the $1$-skeleton of the boundary. But now starting with a triangulation $\mathcal T$ of $\Sigma_g$, one can push forward $\mathcal T$ under powers of a suitable Dehn twist (chosen along a curve which is non-trivial in $\pi_1(H_g)$). It is easy to see that the resulting sequence of triangulations on $\Sigma_g$ has embedded curves in the $1$-skeleton whose image in $\pi_1(H_g)$ form an unbounded set. 
\end{proof}

Of course, what is underlying the previous example is the fact that the natural homomorphism $MCG(H_g) \rightarrow MCG(\Sigma_g)$ has infinite index (where $MCG$ denotes the mapping class group -- the group of homotopy classes of homeomorphisms of the manifold). A similar argument can be used to give higher dimensional examples. Lemma \ref{bad-triangulations} shows that the choice of a good bounding $3$-manifold must depend on the initial triangulation of $\Sigma_g$.

\begin{rem} Some variations of our notion of filling function have previously been considered in the literature. For example,
Hass, Snoeying, and W. Thurston \cite{HST} have considered unknotted polygonal curves in $\mathbb R^3$, and studied the minimal number of triangles in a PL spanning disk for the curve. They give an exponential lower bound for the corresponding filling function, with an upper bound subsequently obtained by Hass, Lagarias, and W. Thurston \cite{HLT}. The corresponding question for knotted polygonal curves bounding PL surfaces was considered by Hass and Lagarias \cite{HL}. In a somewhat different direction, Costantino and D. Thurston \cite{CT} considered a similar question for $3$-manifolds -- but did not require the optimal triangulation on the bounding $4$-manifold to restrict to the original triangulation on the $3$-manifold.
\end{rem}

\subsection*{Discrete analogues of Riemannian systolic inequalities}  

We first need the following Lemma:

\begin{lem}\label{sys2comblemma}
  Let $(M, \T)$ be a closed triangulated $n$-dimensional manifold and let 
  $P_1,\dots, P_N$ be free-homotopy-invariant properties a loop in $M$ can 
  satisfy. Suppose that, for each $\epsilon > 0$, there is a closed geodesic 
  $\gamma_\epsilon$ on the Riemannian manifold $(M, g)$ (where $g$ is the metric from Theorem \ref{thm:riemannian-to-discrete}) so that $\gamma_\epsilon$ satisfies properties $P_1,\dots,P_N$ 
  and
  \begin{equation}\label{sys2comblemmahyp}
    \ell_{g}(\gamma_\epsilon) \leq C \sqrt{\text{Vol}_{g}(M)}.
  \end{equation}
  Then there is an edge loop $p$ on $M$ so that $p$ satisfies 
  properties $P_1,\dots,P_N$ and
  \begin{equation}\label{sys2comblemmaconc}
    \ell_{\T}(p) \leq  \kappa_n C \sqrt{\text{Vol}_{\T}(M)}.
  \end{equation}
\end{lem}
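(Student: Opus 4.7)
The plan is to apply Theorem \ref{thm:riemannian-to-discrete} and then take an $\epsilon\to 0$ limit, exploiting the fact that $\ell_{\T}$ is integer-valued. First I would fix $\epsilon > 0$ arbitrary and invoke Theorem \ref{thm:riemannian-to-discrete} to produce a smooth Riemannian metric $g_\epsilon$ on $M$ satisfying $|\text{Vol}_{g_\epsilon}(M) - \text{Vol}_\T(M)| < \epsilon$ together with the free-homotopy Lipschitz bound with constant $\kappa_n$. By the hypothesis of the lemma, there is a closed geodesic $\gamma_\epsilon$ on $(M,g_\epsilon)$ that enjoys the properties $P_1,\ldots,P_N$ and satisfies $\ell_{g_\epsilon}(\gamma_\epsilon) \leq C\sqrt{\text{Vol}_{g_\epsilon}(M)}$.

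Next I would apply Theorem \ref{thm:riemannian-to-discrete}(2) to the closed curve $\gamma_\epsilon$ in order to obtain an edge loop $p_\epsilon$ which is freely homotopic to $\gamma_\epsilon$ and satisfies $\ell_\T(p_\epsilon) \leq \kappa_n \ell_{g_\epsilon}(\gamma_\epsilon)$. Because each $P_i$ is invariant under free homotopy, $p_\epsilon$ inherits all of the properties $P_1,\ldots,P_N$ from $\gamma_\epsilon$. Chaining the inequalities, together with the volume estimate from Theorem \ref{thm:riemannian-to-discrete}(1), gives
\begin{equation*}
\ell_\T(p_\epsilon) \;\leq\; \kappa_n\,\ell_{g_\epsilon}(\gamma_\epsilon) \;\leq\; \kappa_n C\sqrt{\text{Vol}_{g_\epsilon}(M)} \;\leq\; \kappa_n C\sqrt{\text{Vol}_\T(M)+\epsilon}.
\end{equation*}

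Finally, I would let $\epsilon\to 0$. The key point is that $\ell_\T(p_\epsilon)$ is a non-negative integer, and the displayed bound shows that the collection $\{\ell_\T(p_\epsilon)\}_{\epsilon>0}$ is uniformly bounded; hence along any sequence $\epsilon_k\to 0$ we may pass to a subsequence on which the value $\ell_\T(p_{\epsilon_k})$ stabilizes at some integer $L$. Passing to the limit in the displayed inequality yields $L \leq \kappa_n C\sqrt{\text{Vol}_\T(M)}$, and the corresponding edge loop $p := p_{\epsilon_k}$ for any such $k$ satisfies properties $P_1,\ldots,P_N$ together with \eqref{sys2comblemmaconc}, as desired.

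I do not anticipate a genuine obstacle in this argument: the lemma is essentially a repackaging of Theorem \ref{thm:riemannian-to-discrete} in the presence of a Gromov-type Riemannian estimate, and the only mild subtlety is the final limiting step, which is handled cleanly by the integrality of $\ell_\T$.
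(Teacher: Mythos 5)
Your proposal is correct and follows essentially the same route as the paper: apply Theorem \ref{thm:riemannian-to-discrete} to the geodesic $\gamma_\epsilon$ to get an edge loop $p_\epsilon$ with $\ell_\T(p_\epsilon)\leq \kappa_n C\sqrt{\vol_\T(M)+\epsilon}$, note the $P_i$ are inherited by free-homotopy invariance, and remove the $\epsilon$ by a discreteness argument (the paper cites finiteness of bounded-length edge loops, you use integrality of $\ell_\T$ along a stabilizing subsequence — the same pigeonhole idea, if anything phrased more carefully).
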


\begin{proof}
  Let $\epsilon > 0$ and let $\gamma_\epsilon$ be a noncontractible closed 
  geodesic.  By Theorem \ref{thm:riemannian-to-discrete}, 
  there exists an edge loop $p_\epsilon$ freely homotopic to $\gamma_\epsilon$ such that
  \begin{equation}\label{homedgeloop}
    \ell_{\T}(p_\epsilon) \leq \kappa_n \ell_{g}(\gamma_\epsilon).
  \end{equation}
  Combining inequalities \eqref{sys2comblemmahyp} and \eqref{homedgeloop}, we 
  have that
  \begin{equation}\label{comblooplength}
    \ell_{\T}(p_\epsilon) \leq \kappa_n \ell_{g}(\gamma_\epsilon) 
    \leq \kappa_n C \sqrt{Vol_{g}(M)} \leq \kappa_n C \sqrt{\text{Vol}_{\T}(M)+ \epsilon}.
  \end{equation}
  Thus we have a collection of edge loops $p_\epsilon$ in $M$ that 
  satisfy \eqref{comblooplength}, all of which have properties $P_1,\dots,P_N$. 
  Since the set of edge loops in $M$ is finite, there must be an 
  edge loop $p$ with properties $P_1,\dots,P_N$ whose length satisfies 
  \eqref{sys2comblemmaconc}.
\end{proof}

\begin{figure}
\includegraphics[scale=0.6]{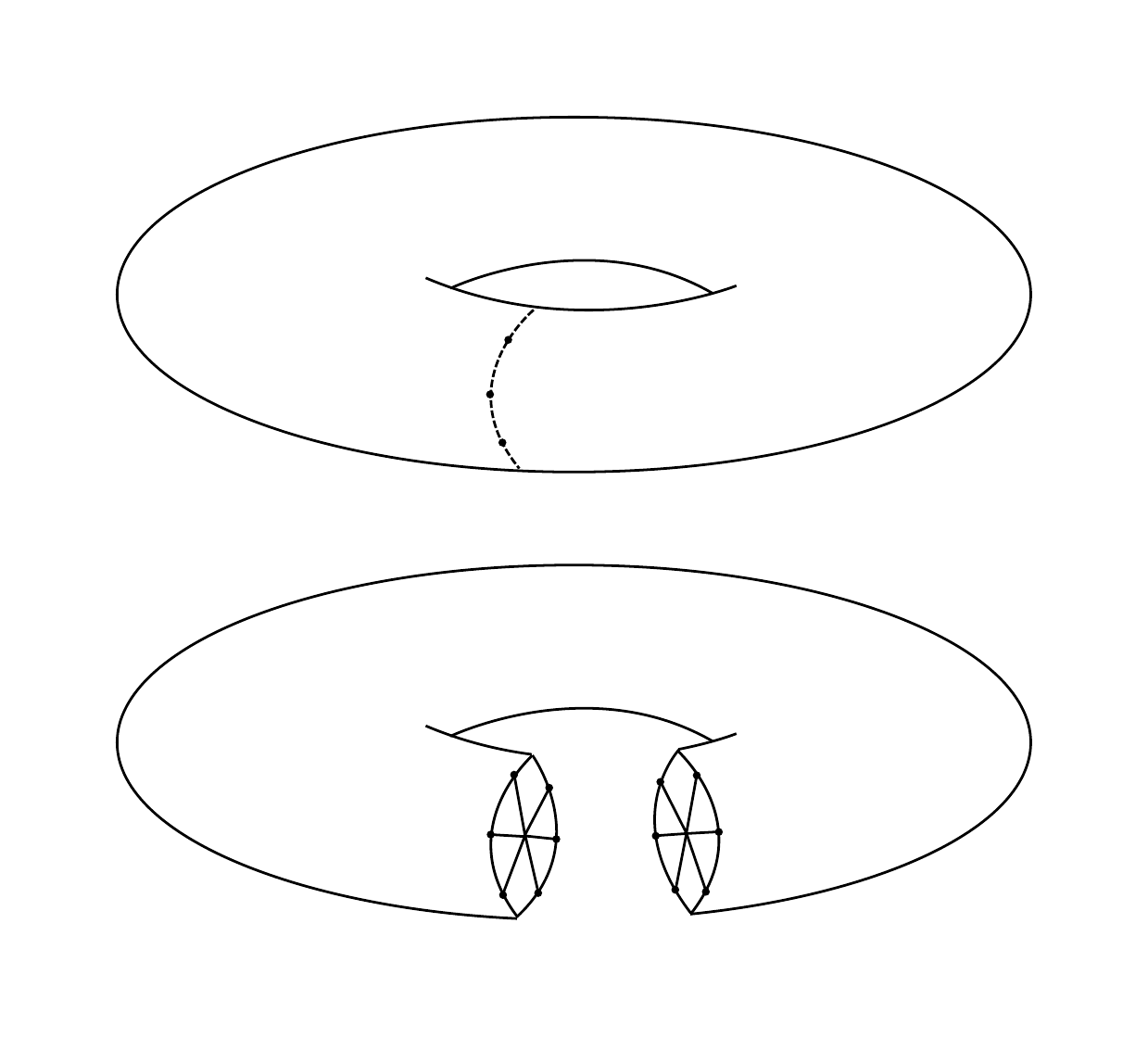}
\caption{An example of the cut-and-cone procedure.}\label{cac}
\end{figure}

\begin{cor}\label{combsys}
  Let $(M,\T)$ be a triangulated surface with infinite fundamental 
  group. Then the systole is bounded by 
  \begin{equation*}
    \sys_{\T}(M) \leq \frac{2}{\sqrt{3}} \kappa_2 \sqrt{\text{Vol}_{\T}(M)}.
  \end{equation*}
\end{cor}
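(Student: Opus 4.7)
The plan is to apply Lemma \ref{sys2comblemma} with $N=1$ and $P_1$ being the (manifestly free-homotopy-invariant) property ``non-contractible loop'', feeding it a classical Riemannian systolic inequality as the Riemannian-side input. The entire argument is a direct transfer of a known smooth result to the combinatorial setting, once one identifies the appropriate systolic constant.

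First, I would recall the Riemannian input. Since $\pi_1(M)$ is infinite, for every smooth Riemannian metric $g'$ on $M$ the set of non-contractible loops is nonempty and bounded below in length, so a standard Arzel\`a--Ascoli / length-minimization argument produces a non-contractible closed geodesic $\gamma$ of length exactly $\sys_{g'}(M)$. The classical Loewner systolic inequality for the torus, extended by Pu, Bavard, Hebda, and Gromov to all closed surfaces with infinite fundamental group, yields the uniform bound
\begin{equation*}
\sys_{g'}(M) \;\leq\; \frac{2}{\sqrt{3}}\,\sqrt{\mathrm{Area}_{g'}(M)}.
\end{equation*}
(Equivalently $\sys_{g'}^2 \leq \tfrac{4}{3}\mathrm{Area}_{g'}$, which is implied by the sharper bound $\sys^2 \leq \tfrac{2}{\sqrt{3}}\mathrm{Area}$ available for all such surfaces.)

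Next, for each $\epsilon > 0$ let $g = g_\epsilon$ denote the smooth Riemannian metric on $M$ produced by Theorem \ref{thm:riemannian-to-discrete}, and let $\gamma_\epsilon$ be a closed $g_\epsilon$-geodesic realizing $\sys_{g_\epsilon}(M)$. Then $\gamma_\epsilon$ is non-contractible and, by the displayed Riemannian inequality applied to $g_\epsilon$,
\begin{equation*}
\ell_{g_\epsilon}(\gamma_\epsilon) \;=\; \sys_{g_\epsilon}(M) \;\leq\; \frac{2}{\sqrt{3}}\,\sqrt{\mathrm{Vol}_{g_\epsilon}(M)}.
\end{equation*}
These $\gamma_\epsilon$ satisfy the hypothesis of Lemma \ref{sys2comblemma} with $C = \tfrac{2}{\sqrt{3}}$ and the single property $P_1 = $ ``non-contractible''. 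The conclusion of that lemma then produces an edge loop $p$ on $(M,\T)$ which is non-contractible and satisfies
\begin{equation*}
\ell_\T(p) \;\leq\; \kappa_2 \cdot \frac{2}{\sqrt{3}}\,\sqrt{\mathrm{Vol}_\T(M)}.
\end{equation*}
Since $p$ is a non-contractible edge loop, the definition of the discrete systole gives $\sys_\T(M) \leq \ell_\T(p)$, yielding the claimed inequality.

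The proof is therefore essentially a bookkeeping exercise on top of Lemma \ref{sys2comblemma}; the only non-trivial ingredient is external, namely locating a Riemannian systolic inequality with the stated constant $\tfrac{2}{\sqrt{3}}$ uniformly across all closed surfaces with infinite $\pi_1$. This is the only potential obstacle, but it is standard in the systolic literature. Once that input is in hand, verifying that ``non-contractible'' is free-homotopy-invariant and unwinding the conclusion of Lemma \ref{sys2comblemma} are immediate.
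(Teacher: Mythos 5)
Your proposal is correct and matches the paper's argument, which simply combines Lemma \ref{sys2comblemma} with Gromov's systolic inequality for closed surfaces with infinite fundamental group (Corollary 5.2.B of \cite{Gromov1}, i.e.\ $\sys_{g}^2 \leq \tfrac{4}{3}\,\mathrm{Vol}_{g}$, giving the constant $\tfrac{2}{\sqrt{3}}$). You have merely written out explicitly the bookkeeping that the paper leaves to the reader.
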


Corollary \ref{combsys} follows from Lemma \ref{sys2comblemma} and Corollary 5.2.B \cite{Gromov1}.

\begin{cor}\label{combhsys}
  Let $(M,\T)$ be a triangulated surface of genus $g > 0$. Then the homological systole is bounded by
  \begin{equation*}
    \sys ^H_{\T}(M) \leq K_g \frac{\log g}{\sqrt g} \sqrt{\text{Vol}_{\T}(M)}.
  \end{equation*}
  where $K_g$ depends only on the genus $g$ and not on $M$ or $\T$.
\end{cor}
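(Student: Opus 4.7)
The plan is to follow exactly the template used for Corollary \ref{combsys}, replacing the Riemannian systolic inequality from \cite{Gromov1} by its homological analogue. The essential external ingredient I would cite is the Riemannian inequality from \cite{Gromov2} stating that there is a universal constant $K$ so that every closed orientable Riemannian surface $(M, h)$ of genus $g > 0$ admits a homologically non-trivial closed geodesic $\gamma$ satisfying
\begin{equation*}
\ell_h(\gamma) \leq K \, \frac{\log g}{\sqrt{g}} \, \sqrt{\text{Area}(M, h)}.
\end{equation*}
On a closed Riemannian surface the infimum of lengths over homologically non-trivial loops is realized by a closed geodesic, so such a $\gamma$ always exists.

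Next, I would check the elementary observation that the property ``$[\eta] \neq 0 \in H_1(M; \Z)$'' is invariant under free homotopy. This is immediate: freely homotopic loops are conjugate in $\pi_1(M)$, and conjugate elements have the same image under the Hurewicz homomorphism $\pi_1(M) \to H_1(M; \Z)$. With $N = 1$, $P_1$ equal to this property, and $C := K \log g / \sqrt{g}$, Gromov's theorem applied to the metric produced by Theorem \ref{thm:riemannian-to-discrete} (for each $\epsilon > 0$) gives exactly the hypothesis \eqref{sys2comblemmahyp} of Lemma \ref{sys2comblemma}.

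Lemma \ref{sys2comblemma} then produces a homologically non-trivial edge loop $p$ in the $1$-skeleton of $\T$ satisfying
\begin{equation*}
\ell_\T(p) \leq \kappa_2 K \, \frac{\log g}{\sqrt{g}} \, \sqrt{\text{Vol}_\T(M)}.
\end{equation*}
Since $\sys^H_\T(M)$ is by definition the minimum of $\ell_\T(\cdot)$ over homologically non-trivial edge loops in $\T$, this is precisely the desired bound, with $K_g := \kappa_2 K$ (in fact independent of $g$).

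The main obstacle lies outside the proof itself: the substantive content is the logarithmic high-genus improvement from \cite{Gromov2}, which we simply invoke as a black box. Within the framework of the present paper, the combinatorial statement is then a direct consequence of Lemma \ref{sys2comblemma}, and the only point to verify inside this proof is the trivial free-homotopy-invariance of the property ``homologically non-trivial.''
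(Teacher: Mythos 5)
Your proposal is correct and matches the paper's (very terse) proof exactly: Corollary \ref{combhsys} is obtained by feeding Gromov's homological systolic inequality for genus-$g$ surfaces (Theorem 2.C of \cite{Gromov2}) into Lemma \ref{sys2comblemma}, with the single free-homotopy-invariant property ``homologically non-trivial,'' just as you describe. The only difference is that you spell out the routine verifications (existence of the minimizing geodesic and Hurewicz/conjugacy invariance) that the paper leaves implicit.
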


Corollary \ref{combhsys} follows from Lemma \ref{sys2comblemma} and Theorem 2.C \cite{Gromov2}.


\subsection*{The cut-and-cone procedure}

Suppose that $(M,\T)$ is a triangulated surface with genus $g \geq 2$.  The $g=0, 1$ cases will be dealt with individually later. In order to simplify notation, we will use $|\T|$ to denote $\text{Vol}_{\T}(M)$, the number of triangles in the triangulation $\T$. Set $(M_{(0)}, \T_{(0)}):= (M, \T)$. By Corollary \ref{combhsys}, there 
exists a homologically nontrivial edge loop $p$ so that
\begin{equation}\label{surfaceloop}
  \ell_{\T}(p) \leq  K \frac{\log g}{\sqrt g}\sqrt{|\T|}.
\end{equation}
By reducing the loop $p$, if necessary, we may assume that $p$ is simple and still satisfies equation (\ref{surfaceloop}).  Cutting $M$ along $p$ yields a connected surface of genus $g-1$ with two boundary components.  We then cone off the two boundary components to obtain a triangulated surface $(M_{(1)}, \T_{(1)})$ with genus $g-1$.  Note that 
\begin{equation*}
  |\T_{(1)}| \leq |\T| + 2 \ell_{\T}(p) \leq |\T| + 2K \frac{\log g}{\sqrt g} \sqrt{|\T|} \leq \left( \sqrt{|T|} + K \frac{\log g}{\sqrt g} \right)^2.
\end{equation*}

Suppose, inductively, that we have triangulated surfaces\footnote{Note that, in what follows, we are abusing notation and using $\T_{(i)}$ to denote both the surface and the triangulation.}
\begin{equation*}
  \T = \T_{(0)}, \T_{(1)},\dots,\T_{(n)}
\end{equation*}
where $n \leq g - 1$, $\T_{(i)}$ is obtained from $\T_{(i-1)}$ by the above cut-and-cone procedure, and we have
\begin{equation*}
  |\T_{(i)}| \leq \left( \sqrt{|\T|} +  K\sum_{k=g-(i-1)}^g\frac{\log k}{\sqrt k} \right)^2.
\end{equation*}

If $n < g-1$, then $\T_{(n)}$ has genus $g - n \geq 2$, so by Theorem
\ref{combhsys}, there exists a homologically nontrivial edge loop $p_{(n)}$ so 
that
\begin{equation*}
  \ell_{\T_{(n)}}\left(p_{(n)}\right) \leq  K \frac{\log 
  (g-n)}{\sqrt{g-n}}\sqrt{|\T_{(n)}|}.
\end{equation*}
We may cut $\T_{(n)}$ along this path and cone off the boundaries to get a 
triangulated surface $\T_{(n+1)}$ 
with genus one less than the genus of $\T_{(n)}$ so that
\begin{align*}
  |\T_{(n+1)}| &\leq |\T_{(n)}| + 2K \frac{\log (g-n)}{\sqrt{g-n}}\sqrt{|\T_{(n)}|}\\
                 &\leq \left( \sqrt{|\T|} +  K\sum_{k=g-(n-1)}^g\frac{\log k}{\sqrt k} \right)^2 \\
                 &+ 2K\frac{\log (g-n)}{\sqrt{g-n}}\left( \sqrt{|\T|} +  K\sum_{k=g-(n-1)}^g\frac{\log k}{\sqrt k} \right)\\
                 &\leq \left( \sqrt{|\T|} +  K\sum_{k=g-n}^g\frac{\log k}{\sqrt k} \right)^2.
\end{align*}

If $n = g - 1$, then $\T_{(n)} = \T_{(g-1)}$ is a torus and we may 
apply Corollary \ref{combsys} to get a noncontractible edge loop $p$ so that
\begin{equation}\label{torusloop}
  \ell_{\T_{g-1}}(p) \leq \frac{2}{\sqrt{3}} \kappa_2 \sqrt{|\T_{(g-1)}|}.
\end{equation}
Cutting and coning along $p$ gives us a triangulated 2-sphere
$\T_{(g)}$ such that
\begin{align*}
  |\T_{(g)}| &= |\T_{(g-1)}| + 2\ell_{\T_{g-1}}(p)\\
               &\leq |\T_{(g-1)}| + 2 \left( \frac{2}{\sqrt{3}} \kappa_2 \right) \sqrt{|\T_{(g-1)}|}\\
               &\leq \left( \sqrt{|\T|} +  K\sum_{k=2}^g\frac{\log k}{\sqrt k} \right)^2 + 
 \frac{4}{\sqrt{3}} \kappa_2 \left( \sqrt{|\T|} +  K\sum_{k=2}^g\frac{\log k}{\sqrt k} 
  \right)\\
  &\leq 11\left( \sqrt{|\T|} +  K\sum_{k=2}^g\frac{\log k}{\sqrt k} \right)^2.
\end{align*}

If $n=g$, then $\T_{(n)} = \T_{(g)}$ is a $2$-sphere.  We need to perform a special coning off of $\T_{(g)}$.  The reason for this is to ensure that, when we glue the surface back to gether to get our $3$-dimensional filling $(N,\T_N)$, we obtain a legitimate simplicial complex decomposition for $N$.  If we would just cone off $\T_{(g)}$, then various tetrahedra could intersect at both the cone point and in their opposite face.  

The procedure for the modified coning of the $2$-sphere is as follows.  
For each simplex $\sigma$ of $\T_{(g)}$, we will triangulate the prism $\sigma \times I$, where $I$ is the unit interval, in the same manner as used by Hatcher in \cite{Hatcher}. 
Suppose the vertices of $\sigma \times \{1\}$ are $\{v_0, v_1, v_2\}$, where the indices represent some fixed ordering of the vertices. 
Let the corresponding vertices of $\sigma \times \{0\}$ be $\{w_0, w_1, w_2\}$.
Then the simplices $\langle v_0v_1v_2w_2 \rangle$, $ \langle v_0v_1w_1w_2 \rangle$, and $ \langle v_0w_0w_1w_2 \rangle$ triangulate $\sigma \times I$, and if we do this for each simplex of $\T_{(g)}$, adjacent simplices will have consistent triangulations.
Finally we cone off $\T_{(g)} \times \{0\}$ to get a triangulated 3-ball $B_3$ which has two layers: the center, which is a coned off copy of $\T_{(g)}$, and the exterior shell, which is our triangulated $\T_{(g)} \times I$.  Note that
\begin{equation*}
  |B_3| = 4|\T_{(g)}| \leq 44\left(\sqrt{|\T|} +  K\sum_{k=2}^g\frac{\log k}{\sqrt k} \right)^2.
\end{equation*}

By gluing together $B_3$ along the cuts in the reverse order, we obtain a 
triangulated 3-manifold $(N,\T^\prime)$ which is a filling of $(M,\T)$ and
\begin{align*}
  |\T^\prime| &\leq 44\left( \sqrt{|\T|} +  K\sum_{k=2}^g\frac{\log k}{\sqrt k} \right)^2\\
      &= 44\left( \sqrt{|\T|} +  K\sum_{k=2}^7\frac{\log k}{\sqrt k} + K\sum_{k=8}^g\frac{\log k}{\sqrt k} \right)^2\\
      &\leq 44\left( \sqrt{|\T|}+ C'+\int_7^g \frac{\log x}{\sqrt x}\, dx\right)^2\\
      &\leq 44\left( \sqrt{|\T|}+ C'+2\sqrt g \log g\right)^2.
\end{align*}

\subsection{Proofs of theorems}

\begin{proof}[Proof of Theorem \ref{lineargenus}]
  Suppose $(M,\T_M)$ is a triangulated surface of genus at most $g$. After 
  performing the above cut-and-cone procedure we obtain $(N,\T_N)$, a filling of $M$, so that
  \begin{align*}
    |\T_N| &\leq 44\left( \sqrt{|\T_M|}+ C'+2\sqrt g \log g\right)^2\\
        &\leq 44\left(\sqrt{|\T_M|}+ C_g'\right)^2\\
        &\leq C_g|\T_M|
  \end{align*}
  for a suitable constant $C_g$.
\end{proof}

\begin{proof}[Proof of Theroem \ref{quadratic}]
  Suppose $(M,\T_M)$ is a triangulated surface of genus $g$.  After 
  performing the above cut-and-cone procedure we obtain $(N,\T_N)$, a filling of $M$, so that
  \begin{equation}\label{notris}
    |\T_N| \leq 44\left(\sqrt{|\T_M|}+ C'+2\sqrt g \log g \right)^2.
  \end{equation}

  Since $M$ is closed, the number of edges in $\T_M$ is $(3/2)|\T_M|$.
  Thus if $|v(\T_M)|$ is the number of vertices of $\T_M$, we know that the Euler characteristic $\chi(\T_M)$ satisfies
  \begin{equation*}
    2 - 2g = \chi(\T_M) = |v(\T_M)| - \frac{|\T_M|}{2}.
  \end{equation*}
  Solving for $g$ then gives that
  \begin{equation}\label{euler}
   g =  \frac{-|v(\T_M)|}{2} + \frac{|\T_M|}{4} + 1 \leq \frac{|\T_M|}{4}.
  \end{equation}
  Combining \eqref{notris} and \eqref{euler}, we can conclude that
  \begin{align*}
    |\T_N| &\leq 44\left( \sqrt{|\T_M|}+ C'+2\sqrt g \log g\right)^2\\
        &\leq 44\left( \sqrt{|\T_M|}\left(1 + \log |\T_M|\right) + C'' \right)^2\\
        &= 44\left(|\T_M|\left(1 + \log |\T_M|\right)^2 + 2C''\sqrt{|\T_M|}(1 + \log |\T_M|) + \left( C'' \right)^2 \right)\\
        &\leq C |\T_M|\left( \log |\T_M| \right)^2
  \end{align*}
  for some suitable $C$.
\end{proof}


\section{Concluding remarks}\label{section:concluding-remarks}

Our results suggest a variety of directions for further work. Firstly, note that throughout our paper we restrict 
ourselves to {\it smooth} triangulations. This restriction appears (and is used) in both implications of our Main Theorem. 
In our Theorem \ref{thm:riemannian-to-discrete}, we make use of Whitney's triangulation process, which produces 
smooth triangulations. In the proof of our Theorem \ref{thm:riemannian-to-discrete}, smoothness of the triangulation
is used to produce nice local coordinates near the various faces. Note however that, even restricting to smooth manifolds,
one can find many non-smooth triangulations. Indeed, for a smooth triangulation, it follows that the image of every simplex 
is smoothly embedded, and hence has link homeomorphic to a sphere of the appropriate codimension. On the other hand, 
the celebrated Cannon-Edwards double suspension theorem (see \cite{Ed} and \cite{C})
states that, if one starts with an arbitrary $n$-dimensional 
homology sphere $H$ (i.e. a connected $n$-manifold whose integral homology vanishes in all degrees $\neq 0,n$), the double suspension $\Sigma^2 H = H* S^1$ is homeomorphic to $S^{n+2}$. Triangulating both $H$ and the $S^1$, we get an 
induced triangulation of the join $\Sigma^2 H = H*S^1$, and hence a triangulation of $S^{n+2}$. But in this triangulation,
the edges in the $S^1$ have links homeomorphic to $H$. As a result, any homeomorphism $\Sigma^2H \rightarrow S^{n+2}$
must take the suspension curve $S^1$ to a non-smooth curve in $S^5$. This yields a triangulation of $S^{n+2}$ which is 
not smooth (in fact, not even PL). 

\vskip 10pt

\noindent {\bf Question:} If we have a class of smooth manifolds for which the systolic inequality holds for all smooth triangulations,
does the systolic inequality still hold (possibly with a different constant) for {\it all} triangulations? How about for PL-triangulations?

\vskip 10pt

For a Riemannian manifold, we construct smooth triangulations whose simplices are ``metrically nice'' (as seen
by the Riemannian metric). In the realm of Riemannian geometry, perhaps the most important notion is that of curvature.
It is reasonable to ask whether one can produce smooth triangulations which also respect the curvature of the 
underlying metric.

\vskip 10pt

\noindent {\bf Question:} If $M$ is a closed negatively curved manifold, does $M$ support a piecewise Euclidean, 
locally CAT(0) metric?

\vskip 10pt

The CAT(0) condition is a metric version of non-positive curvature. In the special case where $M$ is (real) hyperbolic,
this question has an affirmative answer, by work of Charney, Davis, and Moussong \cite{CDM}. Note that, if one replaces
the ``negatively curved'' by ``non-positively curved'', then there are counterexamples (due to Davis, Okun, and Zheng
\cite{DOZ}).

\vskip 5pt

In Corollary \ref{cor:essential} we used the direction $(1) \Rightarrow (2)$ of our Main Theorem to prove that a specific class of triangulated manifolds satisfied the combinatorial systolic inequality.
So the following question is very natural:

\vskip 10pt

\noindent {\bf Question:} Can one directly establish the combinatorial systolic inequality for some classes of manifolds?

\vskip 10pt

Via the implication $(2) \Rightarrow (1)$ in the Main Theorem, this would imply corresponding 
Riemannian systolic inequalities.  Finally, we can ask for improvements on the filling function for triangulated surfaces:

\vskip 10pt

\noindent {\bf Question:} Does the filling function for triangulated surfaces satisfy a linear bound, with constant independent
of the genus?

\vskip 10pt

In our Theorem \ref{lineargenus}, we showed that for each fixed genus $g$, one has a linear filling function (but with 
a constant that depends on the genus). If we try to get a genus independent estimate, our Theorem \ref{quadratic} 
gives a slightly worse bound, with an additional log squared factor. It is unclear whether or not we should expect an
affirmative answer to the last question. Of course, the question of finding a good filling is also of interest in higher 
dimension.

\vskip 10pt

\noindent {\bf Question:} If $M$ is a manifold which bounds, what can one say about the filling function for $M$? For instance, for closed $3$-manifolds, can one compute the (optimal) filling function? Could these filling functions be used to distinguish the topology of the $3$-manifold?

\vskip 10pt

If one just tries to minimize the numbers of simplices over all possible triangulations, then Costantino and D. Thurston \cite{CT}
have some estimates on the corresponding filling function (note that they do not require the optimal triangulation on the $4$-manifold to restrict to the given triangulation on the $3$-manifold).

\section*{Appendix:  Results of Whitney from \cite{Whitney} used in Section \ref{section:whitneyarg}}\label{section:appendix}
\setcounter{equation}{0}
 \setcounter{thm}{0}
 \numberwithin{thm}{section}
 \renewcommand{\thesection}{A}
Here we present many facts used in the proof of Whitney's triangulation theorem in Section \ref{section:whitneyarg}. 
 Detailed arguments can be found in Whitney's book \cite{Whitney}.  A few footnotes have been added to help the reader recall definitions from Sections \ref{section:background} and \ref{section:whitneyarg}.

  We first discuss some properties of the fullness of a simplex.

  The first property is that fullness of a simplex implies fullness of all of its faces.
  That is, if $\sigma^k$ is a face of the simplex $\sigma^r$, then
  \begin{equation}\label{14.6}
    r! \Theta(\sigma^r) \leq k!\Theta(\sigma^k).
  \end{equation}
  
   Fullness is also nearly preserved if the vertices of the simplex are not moved too much.
  \begin{lem}[IV, 14c \cite{Whitney}]\label{14c}
    Given $r \in \mathbb{N}$, $\Theta_0 > 0$, and $\epsilon > 0$, there is a $\rho_0 > 0$ with 
    the following property. Take any simplex $\sigma = <p_0, \cdots, p_r>$ with 
    $\Theta(\sigma) \geq \Theta_0$, and take points $q_0, \dots, q_r$ with $|q_i 
    - p_i | \leq \rho_0 \diam(\sigma)$. Then the simplex $\sigma' = \langle q_0, \cdots, q_r \rangle $ satisfies
     $\Theta(\sigma') \geq \Theta_0 - \epsilon$.
  \end{lem}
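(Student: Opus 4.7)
The plan is to exploit the scale-invariance of fullness to reduce to a compact setting, and then apply uniform continuity of the volume and diameter as functions of the vertex coordinates.

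First I would observe that both $\Theta$ and the hypothesis $|q_i - p_i| \leq \rho_0 \diam(\sigma)$ are invariant under uniform rescaling of the configuration, so by rescaling I may assume $\diam(\sigma) = 1$. Then every vertex $p_i$ lies in the closed unit ball about $p_0$, and once $\rho_0 < 1$ each $q_i$ lies in the closed ball of radius $2$ about $p_0$. This confines every vertex involved to one fixed compact region of $\R^m$ whose size depends only on $r$.

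Next I would control the diameter of $\sigma'$ by the triangle inequality: for any $i, j$,
\begin{equation*}
  \bigl| |q_i - q_j| - |p_i - p_j| \bigr| \leq |q_i - p_i| + |q_j - p_j| \leq 2\rho_0,
\end{equation*}
so $\diam(\sigma') \leq 1 + 2\rho_0$. For the volume I would use that the $r$-dimensional volume of an $r$-simplex $\tau = \langle v_0, \dots, v_r \rangle$ equals $\frac{1}{r!}\sqrt{\det G(\tau)}$, where $G(\tau)$ is the Gram matrix of the edge vectors $v_i - v_0$. This is a smooth function of the vertex coordinates, so on the fixed compact region above it is Lipschitz with some constant $L_r$ depending only on $r$. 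Hence $|\text{Vol}_r(\sigma') - \text{Vol}_r(\sigma)| \leq L_r \rho_0$, and since $\text{Vol}_r(\sigma) = \Theta(\sigma) \geq \Theta_0$ under our normalization, we obtain
\begin{equation*}
  \Theta(\sigma') = \frac{\text{Vol}_r(\sigma')}{\diam(\sigma')^r} \geq \frac{\Theta_0 - L_r \rho_0}{(1 + 2\rho_0)^r}.
\end{equation*}
The right-hand side tends to $\Theta_0$ as $\rho_0 \to 0$, so the plan is to finish by choosing $\rho_0$ small enough (in terms of $r$, $\Theta_0$, $\epsilon$ only) that this right-hand side is at least $\Theta_0 - \epsilon$.

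The main point to verify is that the Lipschitz constant $L_r$ really is uniform across all initial configurations with $\diam(\sigma) = 1$, but this follows immediately from smoothness of the Gram determinant on a fixed compact set, so there is no serious obstacle. One should also check that $\sigma'$ is nondegenerate so that $\Theta(\sigma')$ is defined, but this is automatic once $\rho_0 < \Theta_0/L_r$ from the very volume estimate above.
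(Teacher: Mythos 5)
This lemma is one of the items the paper imports verbatim from Whitney's book (it appears in the Appendix with the remark that detailed arguments are in \cite{Whitney}), so there is no in-paper proof to compare against; I can only assess your argument on its own. Your route --- normalize by scale invariance to $\diam(\sigma)=1$, bound $\diam(\sigma')\leq 1+2\rho_0$ by the triangle inequality, bound $|\vol_r(\sigma')-\vol_r(\sigma)|$ by a Lipschitz estimate on a fixed compact configuration space, and then choose $\rho_0$ so that $(\Theta_0-L_r\rho_0)/(1+2\rho_0)^r\geq \Theta_0-\epsilon$ --- is sound, and it also delivers the nondegeneracy of $\sigma'$ that the paper actually uses (via $\vol_r(\sigma')\geq \Theta_0-L_r\rho_0>0$). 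This is the natural continuity/perturbation argument and certainly proves the statement as used in Section 4.

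Two points deserve tightening. First, $\sqrt{\det G}$ is \emph{not} smooth at degenerate configurations (already for $r=1$ it is $|v_1-v_0|$), and such configurations lie in your compact region, so ``smooth, hence Lipschitz'' is not literally valid as stated. The Lipschitz conclusion is nevertheless true and easy to justify correctly: either write $\vol_r$ as $\frac{1}{r!}$ times the Euclidean norm of the wedge $(v_1-v_0)\wedge\cdots\wedge(v_r-v_0)$, a $1$-Lipschitz function of a polynomial map, or use the base-times-altitude formula, moving one vertex at a time, which gives $|\Delta \vol_r|\leq \frac{1}{r}\vol_{r-1}(\text{base})\cdot|\Delta v_i|$. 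Second, the statement quantifies only over $r$, $\Theta_0$, $\epsilon$, so $\rho_0$ should not depend on the ambient dimension $m$; a compactness-plus-smoothness argument in $\R^{m(r+1)}$ gives a constant that a priori could depend on $m$. The base-times-altitude bound (or the observation that all the $p_i,q_i$ lie in an affine subspace of dimension at most $2r+1$) yields an $L_r$ depending only on $r$ and the diameter bound, which removes this dependence; in the paper's application $m$ is anyway a function of $n$, so this is a cosmetic rather than fatal issue.
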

  
  \begin{lem}[IV, 14b \cite{Whitney}]\label{14b}
  For any $r$-simplex $\sigma = \langle p_0,\cdots, p_r \rangle$ and point $p = \mu_0p_0+ \cdots \mu_rp_r \in \sigma$,
  \begin{equation}\label{14.7}
    \dist(p, \partial \sigma) \geq r! \Theta(\sigma) \diam(\sigma) \inf\{\mu_0,\dots,\mu_r\}.
  \end{equation}
\end{lem}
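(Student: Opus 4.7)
The plan is to reduce the statement to a bound on the distance from $p$ to each of the $r+1$ codimension-one faces of $\sigma$, and then bound each such distance using the volume-fullness relationship. If $p$ lies on $\partial\sigma$ the inequality is trivial (some $\mu_k = 0$), so assume all $\mu_k>0$. For each $k$, let $\sigma_k = \langle p_0,\dots,\hat p_k,\dots,p_r\rangle$ be the face opposite $p_k$, and let $h_k$ denote the perpendicular height from $p_k$ to the affine span of $\sigma_k$. Since $\sigma_k$ is precisely the zero set of the barycentric coordinate $\mu_k$ and $p_k$ is the level set $\mu_k=1$, the distance from $p$ to the affine span of $\sigma_k$ scales linearly with $\mu_k$, giving $\dist(p,\mathrm{aff}(\sigma_k)) = \mu_k h_k$, and hence $\dist(p,\sigma_k) \geq \mu_k h_k$.

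The heart of the argument is a lower bound $h_k \geq r!\,\Theta(\sigma)\,\diam(\sigma)$, valid for every $k$. To obtain this, the plan is to use the standard simplex volume formula
\begin{equation*}
\vol_r(\sigma) = \frac{1}{r}\,h_k \cdot \vol_{r-1}(\sigma_k),
\end{equation*}
together with the definition $\vol_r(\sigma) = \Theta(\sigma)\,(\diam\sigma)^r$, to rewrite $h_k = r\,\Theta(\sigma)(\diam\sigma)^r/\vol_{r-1}(\sigma_k)$. So the task reduces to an upper bound on $\vol_{r-1}(\sigma_k)$ in terms of $\diam(\sigma)$.

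For this upper bound, the plan is a short induction on the dimension establishing the elementary inequality
\begin{equation*}
\vol_k(\tau) \leq \frac{(\diam \tau)^k}{k!}
\end{equation*}
for any $k$-simplex $\tau \subset \R^m$: the base case $k=1$ is clear, and the inductive step splits off one vertex and uses that its height to the opposite face is at most $\diam(\tau)$. Applied to $\sigma_k$, whose diameter is at most $\diam(\sigma)$, this gives $\vol_{r-1}(\sigma_k) \leq (\diam\sigma)^{r-1}/(r-1)!$. Plugging back in yields $h_k \geq r!\,\Theta(\sigma)\,\diam(\sigma)$, as desired.

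Finally, combining the two ingredients,
\begin{equation*}
\dist(p,\partial\sigma) = \min_k \dist(p,\sigma_k) \geq \min_k \mu_k h_k \geq \bigl(\min_k h_k\bigr)\bigl(\min_k \mu_k\bigr) \geq r!\,\Theta(\sigma)\,\diam(\sigma)\,\inf\{\mu_0,\dots,\mu_r\},
\end{equation*}
which is the claim. There is no real obstacle here: the only step requiring care is the $(\diam\tau)^k/k!$ volume bound, but this is a routine induction rather than a deep estimate. The main conceptual point is to recognize that fullness of $\sigma$ controls \emph{all} the heights $h_k$ uniformly, which is precisely what is needed to translate the infimum of barycentric coordinates into a lower bound on the distance to the boundary.
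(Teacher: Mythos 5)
Your proof is correct. The paper does not give a proof of this lemma at all --- it is simply quoted in the appendix from Whitney's book (IV, 14b) --- so there is nothing internal to compare against; your argument (the trivial case $\min_k\mu_k=0$, the identity $\dist(p,\mathrm{aff}(\sigma_k))=\mu_k h_k$ from linearity of barycentric coordinates, the cone formula $\vol_r(\sigma)=\tfrac1r h_k\vol_{r-1}(\sigma_k)$, and the elementary bound $\vol_k(\tau)\le(\diam\tau)^k/k!$ yielding $h_k\ge r!\,\Theta(\sigma)\diam(\sigma)$ uniformly in $k$) is the standard route and is essentially Whitney's own.
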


 \begin{lem}[IV, 15c \cite{Whitney}]\label{15c}
    Let $\pi$ be the orthogonal projection onto a subspace $P$. 
    Let $\sigma = \langle p_0,\cdots, p_r \rangle$ be a simplex, and suppose that\footnote{$U_{\zeta}(P)$ is just the $\zeta$ neighborhood of $P$.} $\sigma \subset U_\zeta(P)$ and $|p_i - p_0| \geq \delta > 0$ for all $i > 0$.
    Then for any unit vector $u \in \sigma$,
    \begin{equation*}
      |u - \pi (u)| \leq \frac{2\zeta}{(r-1)!\Theta(\sigma)\delta}.
    \end{equation*}
  \end{lem}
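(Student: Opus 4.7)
The plan is to expand $u$ in terms of the edge vectors $v_i := p_i - p_0$ for $i = 1, \ldots, r$, which form a basis of the linear span of $\sigma$, control each expansion coefficient using the dual basis of altitudes, and then exploit the fact that every edge is nearly parallel to $P$.

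Writing $u = \sum_{i=1}^r a_i v_i$, I extract the coefficients using a dual basis. Let $n_i$ be the unit normal to the affine hyperplane $H_i$ spanned by $\{p_j : j \neq i\}$, pointing toward $p_i$. Since $p_0 \in H_i$ (as $0 \neq i$), one has $\langle n_i, v_j\rangle = \langle n_i, p_j - p_0\rangle = 0$ whenever $j \neq i$, while $\langle n_i, v_i\rangle = \langle n_i, p_i - p_0\rangle = h_i$, the altitude from $p_i$ to $H_i$. Hence $|a_i| = |\langle n_i, u\rangle|/h_i \leq 1/h_i$, using $|u|=1$.

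The geometric heart of the argument is a lower bound on the altitudes in terms of the fullness. From the identity $r \cdot \vol_r(\sigma) = h_i \cdot \vol_{r-1}(F_i)$, where $F_i$ is the face opposite $p_i$, together with the Hadamard-type bound $\vol_{r-1}(F_i) \leq (\diam \sigma)^{r-1}/(r-1)!$ (which follows from expressing the volume as a Gram determinant of edge vectors and applying Hadamard's inequality), I obtain
$$h_i \;\geq\; \frac{r \cdot \Theta(\sigma)(\diam \sigma)^r \cdot (r-1)!}{(\diam \sigma)^{r-1}} \;=\; r!\,\Theta(\sigma)\,\diam\sigma \;\geq\; r!\,\Theta(\sigma)\,\delta,$$
where the final inequality uses the hypothesis $|p_i - p_0| \geq \delta$, which forces $\diam\sigma \geq \delta$.

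To finish, since both endpoints of each edge lie in $U_\zeta(P)$, the triangle inequality gives $|v_i - \pi(v_i)| \leq 2\zeta$. Then linearity of $\pi$ combined with the triangle inequality yields
$$|u - \pi(u)| \;\leq\; \sum_{i=1}^r |a_i| \cdot |v_i - \pi(v_i)| \;\leq\; \frac{r \cdot 2\zeta}{r!\,\Theta(\sigma)\,\delta} \;=\; \frac{2\zeta}{(r-1)!\,\Theta(\sigma)\,\delta},$$
which is exactly the claimed bound. I do not anticipate any real obstacle; the one step requiring mild care is verifying the altitude-based dual basis identity, which is a standard computation for simplices.
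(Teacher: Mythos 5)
Your proof is correct: the expansion $u=\sum a_i(p_i-p_0)$, the dual-basis bound $|a_i|\leq 1/h_i$, the altitude estimate $h_i\geq r!\,\Theta(\sigma)\,\diam\sigma\geq r!\,\Theta(\sigma)\,\delta$ via $\vol_r(\sigma)=\tfrac{1}{r}h_i\vol_{r-1}(F_i)$ and Hadamard, and the edge estimate $|v_i-\pi(v_i)|\leq 2\zeta$ combine to give exactly the stated constant. The paper itself gives no proof (it cites Whitney's book), and your argument is essentially the standard Whitney-style one -- consistent with the altitude-fullness bound appearing in the paper's Lemma~\ref{14b} -- so nothing further is needed.
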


Now we list some results used to pick various quantities in the proof.

\begin{lem}[App. II, 16a \cite{Whitney}]\label{aII,16a}
  Given the integer $m$, there is a number $\rho^* > 0$ with the following 
  property. Let $K_0$ be a subdivision of $\R^m$ into cubes of side length $h$, 
  and let $K$ be the barycentric subdivision of $K_0$, with vertices $\{ p_i \}$. For each $i$, let $p_i'$ be a point with
  \begin{equation}\label{aII.1}
    |p_i' - p_i| \leq \rho^* h.
  \end{equation}
  Let $f$ be the affine mapping of $K$ into $\R^m$ defined by 
  $f(p_i) = p_i'$. Then $f$ is an injective map from $K$ onto $\R^m$, and the simplices
  $f(\sigma)$ form a simplicial subdivision of $\R^m$.
\end{lem}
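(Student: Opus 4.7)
The strategy is to choose $\rho^*$ small enough that the perturbation preserves both the non-degeneracy of each simplex of $K$ and the combinatorial way adjacent simplices fit together locally, and then deduce the global statement via a proper-covering-map argument.

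First I would observe that, up to rigid motions, there are only finitely many congruence classes of simplices in $K$: the barycentric subdivision of a single cube of side length $h$ consists of $2^m m!$ congruent $m$-simplices, and every $m$-simplex of $K$ is a translate of one of these. In particular, every $m$-simplex $\sigma$ of $K$ satisfies $\diam \sigma \leq h \sqrt{m}/2$ and has fullness bounded below by an explicit constant $\Theta_K(m) > 0$ depending only on $m$. By Lemma \ref{14c} (applied with $\epsilon = \Theta_K(m)/2$), there is $\rho_0 > 0$ depending only on $m$ so that if the vertices of $\sigma$ are each moved by at most $\rho_0 \cdot \diam \sigma$, then the perturbed simplex still has fullness at least $\Theta_K(m)/2 > 0$. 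Setting $\rho^* = \rho_0 \sqrt{m}/2$ (or smaller) ensures the hypothesis $|p_i' - p_i| \leq \rho^* h$ implies $|p_i' - p_i| \leq \rho_0 \diam \sigma$ for every simplex $\sigma$ containing $p_i$, so each $f(\sigma)$ is a non-degenerate affine $m$-simplex and $f$ restricted to any closed $m$-simplex of $K$ is an affine homeomorphism onto its image.

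Next, $f$ extends to a continuous map $f \colon \R^m \to \R^m$, since on any common face of two $m$-simplices the two affine extensions agree (both are determined by the same vertex values). The main obstacle is promoting this to a local homeomorphism near the lower-dimensional skeleta, especially near the vertices $p_i$ where many simplices meet. To handle this I would introduce the linear homotopy $f_t$ defined by $f_t(p_i) = (1-t) p_i + t p_i'$, extended affinely on each simplex, so that $f_0 = \mathrm{id}$ and $f_1 = f$. The fullness bound above holds uniformly in $t \in [0,1]$, so each $f_t$ has nowhere-vanishing Jacobian with consistent sign on every $m$-simplex. An open-closed argument on $t$ then shows that in the star of each vertex $v$ (which contains a bounded number of simplices), the perturbed simplices $\{f_t(\sigma) : \sigma \in \mathrm{St}(v)\}$ continue to tile a neighborhood of $f_t(v)$ without overlap throughout the homotopy; hence $f = f_1$ is a local homeomorphism on all of $\R^m$.

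Finally, since $|f(x) - x|$ is uniformly bounded by a constant multiple of $\rho^* h$ on every simplex (using that $f - \mathrm{id}$ is affine on each simplex with vertex displacements of size at most $\rho^* h$), the map $f$ is proper. A proper local homeomorphism between connected manifolds of the same dimension is a covering map, and because $\R^m$ is simply connected, any such covering is a homeomorphism. Therefore $f$ is bijective, and the collection $\{f(\sigma) : \sigma \in K\}$ forms a simplicial subdivision of $\R^m$.
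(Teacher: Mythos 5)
The paper itself does not prove this lemma: it is imported verbatim from Whitney's book \cite{Whitney} with the proof omitted, so your argument has to stand on its own. Its outer structure is fine. Since every $m$-simplex of the barycentric subdivision of an $h$-cube has diameter exactly $h\sqrt{m}/2$ and there are only finitely many shapes, Lemma \ref{14c} does give a $\rho_0$ so that the perturbed simplices stay uniformly full (hence nondegenerate, and their faces too by \eqref{14.6}); the bound $|f(x)-x|\leq \rho^* h$ (convex combinations of vertex displacements) gives properness; and ``proper local homeomorphism of $\R^m$ onto $\R^m$, simply connected target, hence homeomorphism'' is a correct way to finish.

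The genuine gap is the middle step, which is exactly the nontrivial content of the lemma: the sentence ``an open-closed argument on $t$ then shows that the simplices of the star continue to tile a neighborhood of $f_t(v)$ without overlap'' is an assertion, not an argument. Positivity of the Jacobian on each $m$-simplex does rule out overlap of two simplices across a shared $(m-1)$-face, but it does not by itself prevent the star of a vertex (or of a face of codimension $\geq 2$) from wrapping around the image with local degree $>1$; and neither openness nor closedness of your set of good parameters $t$ is justified (injectivity is not an open condition, and a limit of injective maps need not be injective). Two ingredients are missing and are what actually make the homotopy idea work: (i) a quantitative separation estimate --- every simplex $\tau$ of $K$ lies at distance at least $c(m)\,h$ from its link (again by the finitely many shapes and the fullness/altitude bounds), so after imposing the \emph{additional} smallness requirement $\rho^*<c(m)/3$, say, one has $f_t(\tau)\cap f_t(\operatorname{lk}\tau)=\emptyset$ for all $t\in[0,1]$; note your $\rho^*$ was tuned only to preserve fullness, which does not give this; and (ii) a local degree argument --- with (i), the degree of $f_t$ restricted to the boundary of the star of $\tau$, computed about points of $f_t(\tau)$, is defined for all $t$, varies continuously, hence is identically $1$; since all Jacobians are positive, preimages are counted with sign $+1$, and degree $1$ forces $f_t$ to be injective on the open star, which is what yields the local homeomorphism at \emph{every} point (you need injectivity on whole stars, or stars of all faces, not just a tiling statement near the vertex itself). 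With (i) and (ii) supplied your proof closes; as written, the heart of the lemma is assumed.
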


\begin{lem}\label{rho1}
  Let $N$ be a natural number. Then there is a $\rho_1$ with the following 
  property: For any ball $B$ in $\R^m$ of any radius $a$, let $B'$ be the part 
  of $B$ between any two parallel $(m-1)$-planes whose distance apart is less 
  than $2 \rho_1 a$ apart. Then we have that
  \begin{equation}\label{17.1}
    \text{Vol}(B') < \frac{\text{Vol}(B)}{N}.
  \end{equation}
\end{lem}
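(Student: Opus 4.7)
The plan is to reduce to the unit ball via scaling and then use a direct cross-sectional estimate. The ratio $\text{Vol}(B')/\text{Vol}(B)$ and the slab-width condition $2\rho_1 a$ are both invariant under dilation by $1/a$ (all $m$-volumes rescale by $a^{-m}$ in both numerator and denominator, and the width bound $2\rho_1 a$ becomes $2\rho_1$ for the unit ball). So I would reduce at the outset to the case where $B$ is the unit ball centered at the origin, writing $V_k$ for the volume of the unit $k$-ball in $\R^k$.

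Once we are in this normalized setting, I would choose orthonormal coordinates so that the two parallel hyperplanes take the form $\{x_1 = c - \rho_1\}$ and $\{x_1 = c + \rho_1\}$ for some $c \in \R$. For each $t \in [c-\rho_1, c+\rho_1] \cap [-1,1]$, the cross-section $B \cap \{x_1 = t\}$ is an $(m-1)$-ball of radius $\sqrt{1-t^2}$, and in particular has $(m-1)$-volume at most $V_{m-1}$. Integrating over $t$ yields the crude bound
\begin{equation*}
  \text{Vol}(B') \;\leq\; 2\rho_1 \cdot V_{m-1},
\end{equation*}
which is uniform in the offset $c$ of the slab.

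To finish, I would set $\rho_1 := V_m/(3N V_{m-1})$; then $2\rho_1 V_{m-1} = 2V_m/(3N) < V_m/N = \text{Vol}(B)/N$, which by the scaling reduction gives the desired inequality for every radius $a$. The constant depends only on $m$ and $N$, as required. There is no substantive obstacle here; the one point to be careful about is that the per-slice estimate must be valid irrespective of where the slab sits relative to the center of $B$, but this is handled by the uniform bound $V_{m-1}$ which is attained at the equatorial cross-section. A slightly sharper value of $\rho_1$ could be extracted by noting that the worst case is the centered slab and integrating $(1-t^2)^{(m-1)/2}$ exactly, but the crude bound already suffices for the applications in Section \ref{section:whitneyarg}.
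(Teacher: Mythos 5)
Your proof is correct and follows essentially the same route as the paper's: both bound the slab's volume by its width times the maximal $(m-1)$-dimensional cross-sectional volume $\vol(\B^{m-1})a^{m-1}$ and then choose $\rho_1$ proportional to $\vol(\B^m)/\bigl(N\vol(\B^{m-1})\bigr)$. The only cosmetic difference is that you first rescale to the unit ball, whereas the paper works directly with radius $a$.
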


\begin{proof}
  Let $\rho_1$ be such that
  \begin{equation*}
    0 <\rho_1 < \frac{\text{Vol}(\B^{m})}{2N\text{Vol}(\B^{m-1})},
  \end{equation*}
  where $\B^k$ is the $k$-dimensional unit ball. Suppose two $(m-1)$-planes are
  some distance $d < 2\rho_1a$ apart. Let $B'$ be the volume of the region
  between them contained in $B$. We then have that
  \begin{align*}
    \text{Vol}(B') &< d \text{Vol}(\B^{m-1})a^{m-1}\\
            & < 2\rho_1 \text{Vol}(\B^{m-1}) a^m\\
            & < \frac{\text{Vol}(\B^m) a^m}{N}\\
            & = \frac{\text{Vol}(B)}{N}.
  \end{align*}
\end{proof}

\begin{thm}{(IV, 10A \cite{Whitney})}\label{10A}
  Let $M$ be a smooth, compact, $n$-dimensional submanifold of $\R^m$.
  For each $p \in M$, let $P_p^*$ be the $s$-plane in $\R^m$ normal to $M$ (where $s=m-n$).
  Then there exists a positive number $\delta_0 >0$ with the following properties. 

  Set
  \begin{equation}\label{10.2}
    Q^*_p = P_p^* \cap U_{\delta_0}(p_0).
  \end{equation}
  The $Q_p^*$ fill out a neighborhood $U^*$ of $M$ in an injective way. Set
  \begin{equation}\label{10.3}
    \pi^*(q) = p \text{ if } q \in Q^*_p.
  \end{equation}
  This is a smooth mapping of $U^*$ onto $M$, and
  \begin{equation}\label{10.4}
    |\pi^*(q) - q| \leq 2 \dist(q, M), \quad q \in U^*.
  \end{equation}
\end{thm}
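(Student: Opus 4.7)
The plan is to reduce this to the standard tubular neighborhood theorem by working with the normal bundle map. Define $\Phi \colon NM \to \R^m$ by $\Phi(p,v) = p+v$, where $NM$ is the total space of the normal bundle of $M$ in $\R^m$. The first main step is to verify that $\Phi$ is a local diffeomorphism along the zero section. At a point $(p,0)$, under the natural identification $T_{(p,0)} NM \cong T_p M \oplus N_p M$, the derivative $D\Phi_{(p,0)}$ sends a horizontal vector $w \in T_p M$ to $w$ and a vertical vector $v \in N_p M$ to $v$. Since $T_p M \oplus N_p M = \R^m$, this is a linear isomorphism, so the inverse function theorem yields an open neighborhood $W_p \subset NM$ of $(p,0)$ on which $\Phi$ is a diffeomorphism onto its image.

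The second step is to promote this collection of local diffeomorphisms into a single one on a tube of uniform radius around the zero section, using compactness of $M$. First, since $M$ is compact, one can cover the zero section by finitely many $W_{p_i}$ and extract a radius $\eta > 0$ such that $\Phi$ is a diffeomorphism on each set $\{(p',v) : p' \in B_M(p_i, \eta),\, |v| < \eta\}$. The harder task is to rule out ``distant collisions'': pairs $(p_1,v_1)\neq(p_2,v_2)$ with $|v_i| < \delta_0$ and $\Phi(p_1,v_1) = \Phi(p_2,v_2)$ where $p_1, p_2$ are not already inside a common $B_M(p_i,\eta)$. A standard compactness/contradiction argument handles this: if no such $\delta_0$ existed, one would obtain a sequence of collisions with $|v_i^{(k)}| \to 0$, and passing to a subsequence the base points would converge to common limits, contradicting local injectivity. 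This step is the main obstacle in the argument.

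With a uniform $\delta_0$ fixed, set $U^* = \Phi(\{|v| < \delta_0\})$. This is an open neighborhood of $M$ because $\Phi$ is a local diffeomorphism (hence open) and $M = \Phi(\text{zero section})$. Define $\pi^* \colon U^* \to M$ by $\pi^*(q) = p$ whenever $q = \Phi(p,v) \in U^*$; this is well-defined by injectivity of $\Phi$ on $\{|v|<\delta_0\}$ and smooth because $\Phi^{-1}$ is. Under the identification of the fiber $N_p M$ with the affine plane $P_p^*$ via $v \mapsto p+v$, the sets $Q_p^* = P_p^* \cap U_{\delta_0}(p)$ are exactly the images of the fibers of the tube, which are the level sets of $\pi^*$; this establishes the claim that the $Q_p^*$ fill out $U^*$ injectively.

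For the inequality \eqref{10.4}, I would argue as follows. Let $q \in U^*$ and set $p = \pi^*(q)$, so $q - p \in N_p M$ and $|q - p| < \delta_0$. Pick any $p' \in M$ with $|q - p'| = \dist(q, M)$; then $|q-p'| \leq |q - p| < \delta_0$ and, as a distance minimizer, the vector $q - p'$ is orthogonal to $T_{p'} M$, so $q \in Q_{p'}^*$. Injectivity of the tube forces $p' = p$, giving the stronger equality $|\pi^*(q) - q| = \dist(q, M)$, from which \eqref{10.4} follows immediately (the factor of $2$ in Whitney's statement presumably leaves room for a construction of $U^*$ via tangent-plane projections rather than the normal bundle, where the identification of $\pi^*(q)$ with the nearest point is only approximate).
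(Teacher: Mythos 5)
This theorem is not proved in the paper at all: it is quoted verbatim in the Appendix as Whitney's Theorem IV.10A, with the reader referred to \cite{Whitney} for the argument, so there is no in-paper proof to match your proposal against. On its own merits, your proof is correct and is the standard modern route: the map $\Phi(p,v)=p+v$ on the normal bundle, local invertibility along the zero section, a compactness/contradiction argument to rule out distant collisions and extract a uniform $\delta_0$, and then $U^*$, $\pi^*$, and the fibers $Q_p^*$ read off from the resulting diffeomorphism (note you implicitly correct the typo $U_{\delta_0}(p_0)$, which should be $U_{\delta_0}(p)$). You correctly single out the distant-collision step as the crux, and your sketch of it (sequences with $|v^{(k)}|\to 0$, convergent base points, contradiction with injectivity on a single $W_p$) is the right argument. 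Your last step in fact yields the sharper statement $|\pi^*(q)-q|=\dist(q,M)$, since a nearest point $p'\in M$ to $q$ satisfies $q-p'\in N_{p'}M$ with $|q-p'|<\delta_0$, so injectivity of the tube forces $p'=\pi^*(q)$; this implies \eqref{10.4} with room to spare. Whitney's own proof in \cite{Whitney} works directly with the affine normal planes $P_p^*$ and quantitative estimates of the type collected in the paper's Appendix (the lemmas on $M_{p,\xi}$ and secant vectors), which is where the slack factor $2$ in \eqref{10.4} comes from; since Section \ref{section:whitneyarg} only ever uses the weaker inequality, substituting your argument would change nothing downstream.
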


For the following two Lemmas, recall the definition of $M_{p,\xi}$ from Section \ref{section:background}.
\begin{lem}[IV, 8a \cite{Whitney}]\label{8a}
  Let $M$ be a compact submanifold of $\R^m$. Then there is a $\xi_0 >0$ such that $M_{p, \xi_0}$
   is defined for all $p \in M$. Moreover,
  \begin{equation}\label{8.3}
    \dist(p, M \setminus M_{p, \xi}) \geq \xi \quad \text{ if } \quad \xi \leq \xi_0.
  \end{equation}
\end{lem}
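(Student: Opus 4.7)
My plan is to first establish the two conclusions pointwise for each $p \in M$, producing a local constant $\xi_p > 0$, and then use the compactness of $M$ to pass to a uniform $\xi_0$. Fix $p \in M$. Since $d(\pi_p|_M)_p$ is the identity on $T_p M$, the inverse function theorem supplies an open neighborhood $V_p \subset M$ of $p$ on which $\pi_p$ restricts to a diffeomorphism onto its image in $P_p$; after shrinking, this image contains a disk $P_{p, r_p}$ for some $r_p > 0$, and let $\psi_p \colon P_{p, r_p} \to V_p$ denote the smooth inverse. Because $V_p$ is a neighborhood of $p$ in the manifold $M$, there exists $\epsilon_p > 0$ with $M \cap U_{\epsilon_p}(p) \subset V_p$.

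Next I would extract the local constant $\xi_p$. The compact set $K_p := M \setminus V_p$ is disjoint from a neighborhood of $p$, and I would argue that $\nu_p := \dist(p, \pi_p(K_p))$ can be taken strictly positive for a suitable $V_p$---see the obstacle paragraph below. Setting $\xi_p := \min\{r_p, \epsilon_p, \nu_p\}$, for any $\xi \leq \xi_p$ every $q \in M$ with $\pi_p(q) \in P_{p,\xi}$ must lie in $V_p$, so $M_{p,\xi} = \psi_p(P_{p,\xi})$ is well-defined. The distance inequality then splits into two cases: for $q \in V_p \setminus M_{p,\xi}$ one has $|q - p| \geq |\pi_p(q) - p| \geq \xi$ by the $1$-Lipschitz property of orthogonal projection (together with $\pi_p(p) = p$); for $q \in K_p$ one has $|q - p| \geq \epsilon_p \geq \xi$ directly. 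To pass from the pointwise $\xi_p$ to a uniform $\xi_0$, I would cover $M$ by finitely many of the charts constructed above and verify that $r_p$, $\epsilon_p$, and $\nu_p$ admit uniform positive lower bounds on each chart; this is a routine consequence of the smoothness of the embedding, the uniform form of the inverse function theorem, and the compactness of $M$.

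The hard part will be ensuring $\nu_p > 0$: a priori $M$ could curve back so that some far-away point $q \in M$ projects onto $p$ inside $P_p$ (as happens, e.g., for antipodal points on a sphere). I would resolve this either by interpreting $M_{p,\xi}$ as the connected component of $\pi_p^{-1}(P_{p,\xi}) \cap M$ containing $p$---which is the natural reading of the clause ``for $\xi$ sufficiently small'' in the definition---or by appealing to Theorem \ref{10A}, whose tubular neighborhood of width $\delta_0 > 0$ about $M$ encodes exactly the transversality needed: choosing $\xi_0$ smaller than a fixed multiple of $\delta_0$, the embeddedness of $M$ forces every point of $M$ whose tangent-plane projection lies in $P_{p,\xi_0}$ to belong to the local sheet $V_p$, so that no extraneous branches of $\pi_p^{-1}(P_{p,\xi_0}) \cap M$ can appear. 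Once this point is handled, both the existence of $M_{p,\xi_0}$ and the distance estimate fall out uniformly in $p$.
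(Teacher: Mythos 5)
The paper never proves Lemma \ref{8a}: it is quoted verbatim from Whitney's book in the Appendix, with the proof deferred to \cite{Whitney}, so there is no in-paper argument to compare yours against; what follows is an assessment of your outline on its own terms. The pointwise part is fine, but only under your fix (i): reading $M_{p,\xi}$ as the local sheet through $p$ (equivalently, the component of $\pi_p^{-1}(P_{p,\xi})\cap M$ containing $p$, which is Whitney's intended meaning --- the paper's ``$\pi_p^{-1}$'' is the inverse of the local diffeomorphism, not the full preimage). With that reading your two-case estimate at a fixed $p$ is correct, and you never need $\nu_p$ at all. Your fix (ii), however, is false as stated: the tubular neighborhood of Theorem \ref{10A} does \emph{not} prevent ``extraneous branches'' of the full preimage. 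On the round sphere the antipode $-p$ lies in $\pi_p^{-1}(\{p\})\cap M$ for every $\xi_0>0$, no matter how small $\xi_0$ is relative to $\delta_0$. What the tube can give you is the different (and the actually needed) statement that every point of $M$ within \emph{extrinsic} distance $\xi$ of $p$ lies on the local sheet; and even that requires an argument (if $q$ and $q'=\psi_p(\pi_p(q))$ were distinct, then $q-q'$ is normal to $P_p$, and one must combine tilt control of tangent/normal planes with the injectivity of the normal fibration or a bound on $D\pi^*$ to get a contradiction), not just ``embeddedness''.

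The second and larger gap is that the uniformization step is exactly where the lemma lives, and it is not ``routine''. The pointwise statement is easy; the content of \eqref{8.3} is that $\xi_0$ is independent of $p$, i.e.\ a uniform positive lower bound on your $\epsilon_p$ --- the scale below which $M\cap U_\epsilon(p)$ meets only the sheet through $p$. This is a global non-self-approach property of the embedding (a positive-reach statement), and it does not follow from the uniform inverse function theorem: that theorem controls $\pi_p|_M$ on an \emph{intrinsic} neighborhood of $p$ of uniform size, but says nothing about a far-away portion of $M$ re-entering the extrinsic ball $U_\epsilon(p)$, which is precisely what $\epsilon_p$ rules out. The step can be repaired, e.g.\ by fixing finitely many points $p_i$ with sheets $V_{p_i}$ and radii $\epsilon_{p_i}$, observing that for $p\in M\cap U_{\epsilon_{p_i}/2}(p_i)$ one has $M\cap U_{\epsilon_{p_i}/2}(p)\subset V_{p_i}$, and then using uniform continuity of $x\mapsto T_xM$ (tilt less than $\pi/4$, plus a chord--tangent comparison) to re-graph that piece of $V_{p_i}$ over $P_p$ with uniform radius; alternatively one can run a compactness--contradiction argument with sequences $p_i,q_i\in M$, $|p_i-q_i|\to 0$, $q_i$ off the sheet at $p_i$. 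Either route needs the uniform control on how the tangent plane varies, which your outline never invokes; as written, the passage from $\xi_p$ to $\xi_0$ is asserted rather than proved, and that assertion is the heart of the lemma.
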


\begin{lem}[IV, 8b,c \cite{Whitney}]\label{8b}
  Let $M$ be a compact submanifold of $\R^m$.  Then for any $\lambda > 0$ there is a $\xi_1 
  >0$ with the following property. For any $p \in M$ and any vector $v$ tangent 
  to $M_{p, \xi_1}$,
  \begin{equation}\label{8.4}
    |v - \pi_p (v)| \leq \lambda |\pi_p(v)| \leq \lambda |v|.
  \end{equation}
  Moreover, we have the above inequality for any secant vector $v$ to $M_{p, 
  \xi_1}$ and we get the following results
  \begin{equation}\label{8.5}
    |p' - \pi_p( p')| < \lambda \xi, \quad \text{for } p' \in M_{p, \xi} \text{ and } \xi \leq \xi_1,
  \end{equation}
  \begin{equation}\label{8.6}
    M_{p,\xi} \subset U_{\lambda \xi}(P_{p,\xi}) \text{ and } P_{p, \xi} \subset U_{\lambda \xi}(M_{p,\xi}),\quad \text{for } \xi \leq \xi_1.
  \end{equation} 
\end{lem}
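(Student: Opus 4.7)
\textbf{Proof proposal for Lemma \ref{8b}.} The plan is to first establish the tangent-vector inequality by exploiting smooth variation of tangent planes, then deduce the secant-vector inequality by a mean-value-type argument, and finally extract the geometric consequences for points and for the sets $M_{p,\xi}$, $P_{p,\xi}$ by carefully specializing the secant inequality.

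First I would prove the tangent version. For each $q \in M$, let $\pi_q$ denote orthogonal projection onto $P_q = T_qM$. Since $M$ is a compact smooth submanifold, the assignment $q \mapsto \pi_q$ (viewed as a map into the Grassmannian or as an $m \times m$ matrix-valued function) is continuous, and uniformly continuous by compactness. In particular, for any $\lambda' > 0$ there exists $\eta > 0$ so that $|q - p| < \eta$ implies $\|\pi_q - \pi_p\|_{\mathrm{op}} < \lambda'$. By Lemma \ref{8a}, after shrinking $\xi_0$ we also have $M_{p,\xi} \subset U_\xi(p) \cap \{q : |q-p| \leq c\xi\}$ for some constant $c$ (using that $\pi_p|_M$ is a diffeomorphism onto its image near $p$); thus $\xi_1$ can be chosen small enough that $q \in M_{p,\xi_1}$ forces $\|\pi_q - \pi_p\|_{\mathrm{op}} < \lambda/2$. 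For any $v$ tangent to $M_{p,\xi_1}$ at such a $q$, one has $v = \pi_q(v)$, so $v - \pi_p(v) = (\pi_q - \pi_p)(v)$ and
\begin{equation*}
  |v - \pi_p(v)| \leq (\lambda/2)|v|.
\end{equation*}
Since $|v|^2 = |\pi_p(v)|^2 + |v - \pi_p(v)|^2$, this also gives $|v - \pi_p(v)| \leq \lambda |\pi_p(v)|$ after a mild adjustment of the constant, establishing \eqref{8.4}.

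Next, for the secant version, let $v = p' - q'$ with $p', q' \in M_{p,\xi_1}$. Shrinking $\xi_1$ further if necessary, $M_{p,\xi_1}$ is a smooth graph over $P_{p,\xi_1}$ and in particular path-connected, so there is a smooth path $\gamma : [0,1] \to M_{p,\xi_1}$ from $q'$ to $p'$. Writing $v = \int_0^1 \gamma'(t)\,dt$ and applying \eqref{8.4} to each tangent vector $\gamma'(t)$, the triangle inequality for integrals yields the same bound $|v - \pi_p(v)| \leq \lambda |\pi_p(v)|$ for the secant vector $v$. Equation \eqref{8.5} is then just the specialization of this to the secant vector $v = p' - p$ (noting $p$ itself lies in $M_{p,\xi}$ and $|\pi_p(p'-p)| = |\pi_p(p') - p| < \xi$ since $\pi_p(p') \in P_{p,\xi}$).

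Finally, \eqref{8.6} follows by unpacking \eqref{8.5}. For the first inclusion, any $p' \in M_{p,\xi}$ satisfies $\pi_p(p') \in P_{p,\xi}$ and $|p' - \pi_p(p')| < \lambda \xi$, so $p' \in U_{\lambda\xi}(P_{p,\xi})$. For the reverse inclusion, given $q \in P_{p,\xi}$, use that $\pi_p|_M$ is a diffeomorphism from a neighborhood of $p$ in $M$ onto a neighborhood of $p$ in $P_p$ (consequence of $T_pM = V(P_p)$); shrinking $\xi_1$ once more ensures there is $p' \in M_{p,\xi}$ with $\pi_p(p') = q$, and then $|q - p'| = |p' - \pi_p(p')| < \lambda\xi$ by \eqref{8.5}. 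The main obstacle is uniformity over $p \in M$, but this is handled in a single step by invoking compactness to upgrade continuity of $p \mapsto \pi_p$ to uniform continuity; everything else is bookkeeping.
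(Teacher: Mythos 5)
The paper never proves this lemma: it is quoted verbatim from Whitney (IV, 8b,c), and the appendix explicitly defers all detailed arguments to Whitney's book, so there is no in-paper proof to compare against. Your argument is essentially the standard one underlying Whitney's: uniform continuity of $q \mapsto \pi_q$ on the compact manifold gives the tangent estimate, the secant estimate is obtained by integrating the tangent estimate along a curve in the local graph, and \eqref{8.5}, \eqref{8.6} follow by specializing to $v = p'-p$ together with the uniform local graph representation of $M_{p,\xi}$ over $P_{p,\xi}$. That overall structure is sound.

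Two steps need tightening. The real one is the secant step: for an \emph{arbitrary} smooth path $\gamma$ in $M_{p,\xi_1}$ from $q'$ to $p'$, the integral estimate only yields
\begin{equation*}
|v - \pi_p(v)| \;=\; \Bigl|\int_0^1 \bigl(\gamma'(t) - \pi_p(\gamma'(t))\bigr)\,dt\Bigr| \;\le\; \lambda \int_0^1 |\pi_p(\gamma'(t))|\,dt,
\end{equation*}
i.e.\ a bound by $\lambda$ times the length of the \emph{projected} curve, which for a general path can be much larger than $|\pi_p(v)|$; "the triangle inequality for integrals yields the same bound" is not yet a valid deduction. The fix is immediate given your setup: take $\gamma$ to be the lift under $(\pi_p|_{M_{p,\xi_1}})^{-1}$ of the straight segment from $\pi_p(q')$ to $\pi_p(p')$ (this segment lies in $P_{p,\xi_1}$ because $P_{p,\xi_1}$ is a convex ball in $P_p$, and the lift exists once $M_{p,\xi_1}$ is a graph over $P_{p,\xi_1}$); then $\pi_p(\gamma'(t))$ has constant direction, so $\int_0^1 |\pi_p(\gamma'(t))|\,dt = |\pi_p(v)|$ and the desired inequality follows. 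The second, minor point: the containment $M_{p,\xi}\subset \{q : |q-p|\le c\xi\}$ is not a consequence of Lemma \ref{8a}, which bounds $\dist(p, M\setminus M_{p,\xi})$ from \emph{below}; it follows instead from the same uniform graph representation you invoke later (the height of the graph over $P_{p,\xi}$ is $O(\xi^2)$, uniformly in $p$ by compactness and smoothness). With these repairs, including the uniform-in-$p$ graph radius that also gives the surjectivity of $\pi_p\colon M_{p,\xi}\to P_{p,\xi}$ used for the second inclusion in \eqref{8.6}, the proof is complete.
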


We now present some facts about affine subspaces of $\R^m$.

\begin{lem}[App. II, 14b \cite{Whitney}]\label{aII,14b}
  Let $P^*$ be an affine subspace of $\R^m$, let $P$ be an affine subspace of $P^*$, let $Q$ be a closed set in $P^*$, let $p^*$ be a point of $\R^m$ not in $Q$, and let $Q^*$ be the join of $p^*$ and $Q$.
  Then
  \begin{equation*}
    \dist(Q^*, P) \geq \frac{\dist(Q,P) \cdot \dist (p^*, P^*)}{\diam(Q^*)}.
  \end{equation*}
\end{lem}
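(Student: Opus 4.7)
The strategy is to fix $q \in Q$, bound below $\dist(x, P)$ for $x$ on the segment from $p^*$ to $q$ (which lies in $Q^*$), and then take the infimum over $q \in Q$ using the bounds $|q - p^*| \leq \diam(Q^*)$ and $\dist(q, P) \geq \dist(Q, P)$.

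First I would parameterize the segment as $x(t) = (1-t)p^* + tq$ for $t \in [0,1]$. Since the orthogonal projection $\pi_P$ onto $P$ is affine, $x(t) - \pi_P(x(t)) = (1-t)u + tv$ with $u := p^* - \pi_P(p^*)$ and $v := q - \pi_P(q)$, both in $V(P)^\perp$; hence $\dist(x(t), P) = |(1-t)u + tv|$. The key observation is the orthogonal decomposition $u = u_1 + u_2$, with $u_1 := p^* - \pi_{P^*}(p^*) \in V(P^*)^\perp$ having $|u_1| = \dist(p^*, P^*) =: d$, and $u_2 := \pi_{P^*}(p^*) - \pi_P(p^*) \in V(P^*) \cap V(P)^\perp$. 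Since $q, \pi_P(q) \in P^*$, we also have $v \in V(P^*) \cap V(P)^\perp$, so $v \perp u_1$, yielding
$$|u|^2|v|^2 - (u \cdot v)^2 = |u_1|^2|v|^2 + \bigl(|u_2|^2|v|^2 - (u_2 \cdot v)^2\bigr) \geq d^2 |v|^2.$$

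Next, the unconstrained minimum over $t \in \R$ of $|(1-t)u + tv|^2 = |u + t(v-u)|^2$ equals $(|u|^2|v|^2 - (u \cdot v)^2)/|v-u|^2$, so combined with the inequality above this gives $\dist(x(t), P)^2 \geq d^2|v|^2/|v-u|^2$ for every $t \in [0,1]$. Moreover, $v - u \in V(P)^\perp$ while $(q - p^*) - (v - u) = \pi_P(q) - \pi_P(p^*) \in V(P)$, so these two vectors are orthogonal, giving $|v - u| \leq |q - p^*|$. Consequently
$$\dist(x(t), P) \geq \frac{d \cdot \dist(q, P)}{|q - p^*|} \geq \frac{\dist(p^*, P^*) \cdot \dist(Q, P)}{\diam(Q^*)},$$
where the last step uses $|q - p^*| \leq \diam(Q^*)$ (since both $p^*, q \in Q^*$) and $\dist(q, P) \geq \dist(Q, P)$. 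Taking the infimum over $x \in Q^*$ finishes the proof.

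The heart of the argument is the Cauchy--Schwarz-type estimate in the first paragraph, which isolates the contribution of the component of $p^*$ orthogonal to $P^*$; this is what makes the factor $\dist(p^*, P^*)$ (rather than the weaker $\dist(p^*, P)$) appear in the bound. Once this is in hand, the rest is a standard quadratic minimization, and the case $p^* \in P^*$ (where both sides of the desired inequality vanish trivially) is subsumed. I do not anticipate serious obstacles beyond correctly identifying the orthogonal decomposition.
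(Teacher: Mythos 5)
Your argument is correct, and it is worth noting that the paper itself offers no proof of this lemma to compare against: it is quoted verbatim from Whitney's book in the Appendix, with the reader referred to \cite{Whitney} for the argument. Your proof is a clean, self-contained substitute. The key steps all check out: $\dist(x(t),P)=|(1-t)u+tv|$ because the affine combination of the projections is again the orthogonal projection; $|u|^2=|u_1|^2+|u_2|^2$ since $u_1\in V(P^*)^\perp$ and $u_2\in V(P^*)$; $u\cdot v=u_2\cdot v$ since $v\in V(P^*)$; the identity $|u|^2|v-u|^2-(u\cdot(v-u))^2=|u|^2|v|^2-(u\cdot v)^2$ gives the stated value of the unconstrained minimum; and $|v-u|\leq|q-p^*|$ follows from the orthogonal splitting you describe. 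The only point you should flag explicitly is the division by $|v-u|$: if $u=v$ then $v\in V(P^*)$ forces $u_1=0$, i.e.\ $\dist(p^*,P^*)=0$, so the inequality is trivial in that case, and whenever $\dist(p^*,P^*)>0$ one automatically has $u\neq v$ (as $u$ has a nonzero component orthogonal to $V(P^*)$ while $v$ does not), so no genuine gap arises -- but as written your parenthetical only mentions $p^*\in P^*$, and a one-line remark tying this to the degenerate denominator would make the proof airtight. Whitney's own proof proceeds by a more classical similar-triangles/projection argument; your Gram-determinant (Cauchy--Schwarz) formulation packages the same geometric fact -- distance from the origin to the line through $u$ and $v$ equals the parallelogram area over the base $|v-u|$ -- in a way that isolates the factor $\dist(p^*,P^*)$ cleanly, which is exactly what the lemma requires.
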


\begin{lem}[IV, 10a \cite{Whitney}]\label{10a}
  Take $\lambda < 1$ and $\xi_1 \leq \xi_0$ as in Lemmas \ref{8a} and \ref{8b}.
  Take any $p,p' \in M$ with $|p - p'| < \xi_1$.
  Then\footnote{Recall that $P_p^*$ denotes the normal plane to $M$ at $p$.} $P_p^*$ intersects $P_p$ in a unique point, and if $v \in P_{p'}^*$, then
  \begin{equation}\label{10.12}
    |\pi_p v| \leq \lambda |v|.
  \end{equation}
\end{lem}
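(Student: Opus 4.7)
Both conclusions will follow from a single estimate: for every $v\in V(P_{p'}^*)$,
\begin{equation*}
|\pi_p v|\leq\lambda|v|.
\end{equation*}
(I read the statement as concerning $P_{p'}^*\cap P_p$, since $P_p^*\cap P_p=\{p\}$ trivially.) Granted this estimate, the uniqueness of the intersection is automatic: since $\lambda<1$, any $v$ in $V(P_p)\cap V(P_{p'}^*)$ satisfies $|v|=|\pi_p v|\leq\lambda|v|$, forcing $v=0$. The direction subspaces are therefore transverse, and as their dimensions sum to $n+s=m$ we get $V(P_p)\oplus V(P_{p'}^*)=\R^m$. Decomposing $p-p'=v_1+v_2$ with $v_1\in V(P_p)$ and $v_2\in V(P_{p'}^*)$ then exhibits $p-v_1=p'+v_2$ as the unique point of $P_p\cap P_{p'}^*$.

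For the main inequality, the idea is to dualize Lemma~\ref{8b}: a tangent-plane tilt estimate at $p'$ passes, by orthogonality, to a normal-plane tilt estimate at $p'$. Since $|p-p'|<\xi_1\leq\xi_0$, the point $p$ lies in $M_{p',\xi_1}$, so Lemma~\ref{8b} applied at $p'$ yields, for every $u\in V(P_p)$,
\begin{equation*}
|u-\pi_{p'}u|\leq\lambda|u|.
\end{equation*}
Now fix $v\in V(P_{p'}^*)$. Writing $|\pi_p v|$ by duality as
\begin{equation*}
|\pi_p v|=\sup_{u\in V(P_p),\,|u|=1}\langle u,v\rangle,
\end{equation*}
decompose each such $u=\pi_{p'}u+(u-\pi_{p'}u)$. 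The first summand lies in $V(P_{p'})$ while $v\in V(P_{p'}^*)$, so $\langle\pi_{p'}u,v\rangle=0$; the second summand has norm at most $\lambda$. Hence $\langle u,v\rangle=\langle u-\pi_{p'}u,v\rangle\leq\lambda|v|$, and taking the supremum over $u$ gives \eqref{10.12}.

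There is no serious obstacle, only bookkeeping. One must be careful to distinguish affine planes from their direction subspaces, and one must recognize the symmetry in Lemma~\ref{8b}: the hypothesis $|p-p'|<\xi_1$ allows the roles of $p$ and $p'$ to be swapped, giving the tangent-plane estimate for vectors in $V(P_p)$ rather than $V(P_{p'})$. The geometric content of the argument is simply that the orthogonal complement of a nearly-correct tangent plane is itself a nearly-correct normal plane, so vectors in $V(P_{p'}^*)$ are almost normal to $V(P_p)$ and project onto it with correspondingly small length.
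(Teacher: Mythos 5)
Your proof is correct. Note that the paper itself supplies no argument for this lemma: it is quoted in the Appendix directly from Whitney's book, with the details deferred to \cite{Whitney}, so there is no in-paper proof to compare against; your argument is a faithful, self-contained reconstruction along the same lines as Whitney's (dualize the tangent-plane tilt estimate, then count dimensions). Two small points are worth making explicit. First, you correctly diagnosed the typo: the conclusion must be read as ``$P_{p'}^*$ intersects $P_p$ in a unique point,'' which is exactly how the lemma is invoked when the map $\pi_p^*$ is defined in Section \ref{section:whitneyarg}. Second, your claim that $|p-p'|<\xi_1$ forces $p\in M_{p',\xi_1}$ is precisely inequality \eqref{8.3} of Lemma \ref{8a} applied at $p'$, and should be cited; with that in place, Lemma \ref{8b} at $p'$ applies to vectors of $V(P_p)=T_pM$ since $M_{p',\xi_1}$ is an open neighborhood of $p$ in $M$, the orthogonality $\langle \pi_{p'}u,\,v\rangle=0$ for $v\in V(P_{p'}^*)$ transfers the estimate to the normal direction exactly as you say, and the uniqueness of the intersection point then follows from $\lambda<1$ together with $\dim V(P_p)+\dim V(P_{p'}^*)=n+s=m$.
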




\end{document}